\newcommand\bes{\begin{eqnarray}}
\newcommand\ees{\end{eqnarray}}
\newcommand\R{\mathbb R}
\newtheorem{theorem}{Theorem}[section]
\newtheorem{lemma}[theorem]{Lemma}
\newtheorem{proposition}[theorem]{Proposition}
\numberwithin{equation}{section}
\begin{document}

\title[Semi-wave and spreading speed ]{Semi-wave and spreading speed of the nonlocal Fisher-KPP equation with free boundaries}
\author[Y. Du, F. Li and M. Zhou]{Yihong Du$^\dag$, Fang Li$^\ddag$ and Maolin Zhou$^{\dag}$}
\thanks{\hspace{-.6cm}
$^\dag$ School of Science and Technology, University of New England, Armidale, NSW 2351, Australia.  \\
$^\ddag$  School of Mathematics, Sun Yat-sen University,   Guangzhou, Guangdong, 510275,  People's Republic of China.
}

\date{\today}

\maketitle

\begin{abstract}

In Cao, Du, Li and Li \cite{CDLL2019}, a nonlocal diffusion model with free boundaries extending the local diffusion model of Du and Lin \cite{DL2010} was introduced and studied.
For Fisher-KPP type nonlinearities,  its long-time dynamical behaviour is shown to follow  a spreading-vanishing dichotomy. However, when spreading happens, the question of spreading speed was left open in \cite{CDLL2019}.
In this paper we  obtain a rather complete answer to this question. We find a threshold condition on the kernel function such that  spreading grows linearly in time exactly when this condition holds, which is achieved by completely solving the associated semi-wave problem that determines this linear speed;  when the kernel function violates this condition, we show that accelerating spreading happens.
\bigskip

\noindent \textbf{Keywords}: Nonlocal diffusion; Free
boundary;  Semi-wave; Spreading speed;

\hspace{1.3cm} Accelerating spreading.
\medskip

\noindent\textbf{AMS Subject Classification (2000)}: 35K57,
35R20

\end{abstract}

\section{Introduction}
\noindent

In this paper we continue the study of \cite{CDLL2019} on a ``nonlocal diffusion" version of the following
free boundary model with ``local diffusion'':
\begin{equation}
\left\{
\begin{aligned}
&u_t-du_{xx}=f(u),& &t>0,~g(t)<x<h(t),\\
&u(t,g(t))=u(t,h(t))=0,& &t>0,\\
&g'(t)=-\mu u_x(t,g(t)),\; h'(t)=-\mu u_x(t,h(t)),& &t>0,\\
&g(0)=g_0,\; h(0)=h_0,~u(0,x)=u_0(x),& &g_0\le x\le h_0,
\end{aligned}
\right.
\label{1}
\end{equation}
where $f$ is a $C^1$ function satisfying $f(0)=0$,
$\mu>0$ and $g_0<h_0$ are  constants, and $u_0$ is a $C^2$ function which is positive in $(g_0, h_0)$ and
vanishes at $x=g_0$ and $x=h_0$. For logistic type of $f(u)$, \eqref{1} was first studied in \cite{DL2010}, as a model for the spreading of a new or invasive species with population density $u(t,x)$, whose population range $(g(t), h(t))$ expands through its boundaries $x=g(t)$ and $x=h(t)$ according to the Stefan conditions
 $g'(t)=-\mu u_x(t, g(t)),\; h'(t)=-\mu u_x(t,h(t))$.
A deduction of these conditions based on some ecological assumptions can be found in \cite{BDK}.

It was shown in\cite{DL2010} that problem (\ref{1})
admits a unique solution $(u(t,x), g(t), h(t))$ defined for all $t>0$. Its long-time dynamical behaviour is characterised by a ``spreading-vanishing dichotomy'':
Either $(g(t), h(t))$ is contained in a bounded set of $\mathbb R$ for all $t>0$ and $u(t,x)\to 0$ uniformly as $t\to\infty$ (called the vanishing case),
or $(g(t), h(t))$ expands to $\mathbb R$ and $u(t,x)$ converges to the unique positive steady state of the ODE $v'=f(v)$ locally uniformly in $x\in\mathbb R$ as $t\to\infty$ (the spreading case). Moreover, when spreading occurs,
\[
\lim_{t\to\infty} \frac{-g(t)}{t}=\lim_{t\to\infty}\frac{h(t)}{t}=k_0>0,
\]
and $k_0$ is uniquely determined by a traveling wave equation associated to \eqref{1}.

These results have been extended to cases with more general $f(u)$ in \cite{DLou2015, KMY2019, KY2016}, and more accurate estimates for $g(t), h(t)$ and $u(t,x)$ for the spreading case have been obtained  in \cite{DMZ}.  Among the many further extensions, we only mention
the extension to various heterogeneous environments in \cite{DYH2013JFA, DyhLx2015, LLS16-1, LLS16-2, DDL-1, DDL-2, WangMX2016JFA}, to equations with an advection term \cite{GLZ, WZZ},  extensions to certain Lotka-Volterra two-species systems and epidemic models
 in \cite{DYH2014, DWZ, GJS2012, LZ2017, wangmx1}, and extensions to high space dimensions in \cite{DG2012, DMW2014}; see also the references therein.

Problem \eqref{1} is closely related to the following associated Cauchy problem
\begin{equation}
\left\{
\begin{aligned}
&U_t-dU_{xx}=f(U),& &t>0,~x\in\mathbb R,\\
&U(0,x)=U_0(x),& & x\in \mathbb R,
\end{aligned}
\right.
\label{Cauchy-local}
\end{equation}
Indeed, it follows from \cite{DG2012} that, if the initial functions are the same, i.e., $u_0=U_0$, then the unique solution $(u, g, h)$ of \eqref{1} and the unique solution $U$ of \eqref{Cauchy-local} are related in the following way: For any fixed $T>0$,
as $\mu\to\infty$, $(g(t), h(t))\to \mathbb R$ and $u(t,x)\to U(t,x)$ locally uniformly in $(t,x)\in(0, T]\times \mathbb R$. Thus \eqref{Cauchy-local} may be viewed as the limiting problem of \eqref{1} (as $\mu\to\infty$).

Problem \eqref{Cauchy-local} with $U_0$ a nonnegative function having nonempty compact support has long been used to describe the spreading of  a new or invasive species; see, for example, classical works \cite{Aronson1978, Fisher1937, KPP}. In both \eqref{1} and \eqref{Cauchy-local}, the dispersal of the species is described by the diffusion term
$d u_{xx}$, widely known as a ``local diffusion'' operator.

The nonlocal diffusion model with free boundaries considered in \cite{CDLL2019} has the following form:
\begin{equation}\label{main-fb}
\begin{cases}
\displaystyle u_t=d\int_{g(t)}^{h(t)}J(x-y)u(t,y)dy-du(t,x)+f(u),
& t>0,~x\in(g(t),h(t)),\\
\displaystyle u(t,g(t))=u(t,h(t))=0, &t>0,\\
\displaystyle h'(t)=\mu\int_{g(t)}^{h(t)}\int_{h(t)}^{+\infty}
J(x-y)u(t,x)dydx, &t>0,\\
\displaystyle g'(t)=-\mu\int_{g(t)}^{h(t)}\int_{-\infty}^{g(t)}
J(x-y)u(t,x)dydx, &t>0,\\
\displaystyle u(0,x)=u_0(x),~h(0)=-g(0)=h_0,  &  x\in[-h_0,h_0],
\end{cases}
\end{equation}
where $x=g(t)$ and $x=h(t)$ are the moving boundaries
to be determined together with $u(t,x)$, which is always assumed to be identically 0 for $x\in \R\setminus [g(t), h(t)]$; $d$ and $\mu$
 are positive constants. The initial
function $u_0(x)$ satisfies
\begin{equation}\label{initial-u0}
u_0\in C([-h_0,h_0]),~u_0(-h_0)=u_0(h_0)=0
~\text{ and }~u_0(x)>0~\text{ in }~(-h_0,h_0),
\end{equation}
with  $[-h_0,h_0]$ representing the initial population range of the species. The kernel function
$J: \mathbb{R}\rightarrow\mathbb{R}$  has the properties
\begin{description}
\item[(J)]$J\in C(\mathbb R)\cap L^\infty(\mathbb R),\; J\geq 0,\; J(0)>0,~\int_{\mathbb{R}}J(x)dx=1$, $J$  is even.
\end{description}
The growth term $f: \mathbb{R}^+\rightarrow\mathbb{R}$ is assumed to be continuous and satisfies
\begin{description}
\item[(f1)] $f(0) = 0$ and $f(u)$
is locally Lipschitz in $u\in\R^+$, i.e., for any $L>0$, there

 \hspace{0.1cm} exists a constant $K=K(L)>0$
such that
\[\hspace{0.8cm}
\mbox{$\left|f(u_1)-f(u_2)\right|\le K|u_1-u_2|$ for $u_1, u_2\in [0, L]$;}
\]
\end{description}
\begin{description}
\item[(f2)] There exists $K_0>0$ such that $f(u)<0$ for $u\ge K_0$.
\end{description}

The nonlocal free boundary problem \eqref{main-fb} may be viewed as describing the spreading of a new or invasive species with population density $u(t,x)$, whose population range $[g(t), h(t)]$ expands  according to the free boundary conditions
\[
\begin{cases}
\displaystyle h'(t)=\mu\int_{g(t)}^{h(t)}\int_{h(t)}^{+\infty}
J(x-y)u(t,x)dydx,\smallskip\\
\displaystyle g'(t)=-\mu\int_{g(t)}^{h(t)}\int_{-\infty}^{g(t)}
J(x-y)u(t,x)dydx,
\end{cases}
\]
that is,   the expanding rate of the range $[g(t), h(t)]$
is proportional to the outward flux of the population across the boundary of the range (see \cite{CDLL2019} for further explanations and justification).

One advantage of the nonlocal problem \eqref{main-fb} over the local problem \eqref{1} is that the nonlocal diffusion term 
\[
d\int_{g(t)}^{h(t)}J(x-y)u(t,y)dy-du(t,x)
\]
in \eqref{main-fb}
is capable to include spatial dispersal strategies of the species beyond random diffusion modelled by the term $d u_{xx}$ in \eqref{1}.

Under the assumptions {\bf (J),\ (f1)} and {\bf (f2)},
the well-posedness and global existence of (\ref{main-fb}) has been established in \cite{CDLL2019}.
If further, $f$ is a Fisher-KPP type function, namely it satisfies
\begin{description}
\item[(f3)] \ \ $f\in C^1$,\; $f>0=f(0) = f(1)$ in $(0,1)$, $f'(0)>0>f'(1)$, \mbox{ and }

 $ \hspace{0.4cm}  f(u)/u$ is nonincreasing \footnote{In {\bf (f3)} of \cite{CDLL2019}, it is assumed that $f(u)/u$ is strictly decreasing for $u>0$, but this request can be relaxed to $f(u)/u$ nonincreasing without affecting the conclusions there.} in $u>0$,
\end{description}
then the long-time dynamical behaviour of \eqref{main-fb} is determined by a ``spreading-vanishing dichotomy" (see Theorem 1.2 in \cite{CDLL2019}): As $t\to\infty$, either
\begin{itemize}
\item[(i)] \underline{\rm Spreading:} $\lim_{t\to+\infty} (g(t), h(t))=\mathbb R$ and $\lim_{t
\rightarrow+\infty}u(t,x)=1$ locally uniformly  in
$\mathbb{R}$, or
\item[(ii)] \underline{\rm Vanishing:} $\lim_{t\to+\infty} (g(t), h(t))=(g_\infty, h_\infty)$ is a finite interval and $\lim_{t
\rightarrow+\infty}u(t,x)=0$ uniformly for $x\in [g(t),h(t)]$.
\end{itemize}
Criteria for spreading and vanishing are also obtained in \cite{CDLL2019}; see Theorem 1.3 there. In particular, if the size of the initial population range $2h_0$  is large enough, then spreading always happens.

However, when spreading happens, the question of spreading speed was not considered
in \cite{CDLL2019}.
The main purpose of this paper is to determine the spreading speed left open there. 

In order to describe the main results of this paper, we introduce a key condition on the kernel function $J$, namely
\begin{description}
\item[(J1)] \ \  $\displaystyle \int_{-\infty}^0\int_0^{+\infty} J(x-y)dydx<+\infty$, \ i.e., $\displaystyle\int_{-\infty}^0\int_{-\infty} ^x J(y)dydx<+\infty$.
\end{description}

\begin{theorem} \label{thm1} Suppose that {\bf (J)} and {\bf (f3)} are satisfied, and spreading happens to the unique solution $(u, g, h)$ of \eqref{main-fb}. Then the following conclusions hold.
\begin{itemize}
\item[(i)] If {\bf (J1)} is satisfied, then there exists a unique $c_0>0$ such that
\[
\lim_{t\to\infty}\frac{h(t)}{t}=-\lim_{t\to\infty} \frac{g(t)}{t}=c_0.
\]
\item[(ii)] If {\bf (J1)} does not hold, then
\[
\lim_{t\to\infty}\frac{h(t)}{t}=-\lim_{t\to\infty} \frac{g(t)}{t}=+\infty.
\]
\end{itemize}
\end{theorem}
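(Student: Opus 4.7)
The plan is to analyse the associated \emph{semi-wave problem}: find a pair $(c,\phi)$ with $c>0$ and $\phi\in C((-\infty,0])$ satisfying
\begin{equation*}
\begin{cases}
d\int_{-\infty}^0 J(\xi-\eta)\phi(\eta)\,d\eta - d\phi(\xi) + c\phi'(\xi) + f(\phi(\xi)) = 0, & \xi<0,\\
\phi(-\infty)=1,\ \phi(0)=0,\ \phi \text{ nonincreasing},
\end{cases}
\end{equation*}
together with the speed-matching condition
\[
c=\mu\int_{-\infty}^0\int_0^{+\infty}J(\xi-\eta)\phi(\xi)\,d\eta\,d\xi,
\]
which arises by writing a rightward front for \eqref{main-fb} as $u(t,x)=\phi(x-ct)$ and translating the free boundary law into the moving frame. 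The putative spreading speed is then $c_0:=c$, and the strategy is: (a) solve the semi-wave problem when \textbf{(J1)} holds; (b) use $\phi$ to build rigorous upper and lower barriers for $(u,g,h)$ yielding $h(t)/t\to c_0$; (c) when \textbf{(J1)} fails, show no finite-speed semi-wave can exist and deduce accelerated spreading.

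For part (i), treat the profile equation with $c$ as a parameter. Using the Fisher-KPP hypothesis \textbf{(f3)}, in particular that $f(u)/u$ is nonincreasing, I would reformulate the profile equation on the half-line as a fixed point problem on a space of monotone decreasing functions and prove by monotone iteration---starting from the upper barrier $\phi\equiv 1$ and an exponential lower barrier built from the principal eigenvalue of the linearisation at $0$---that for every $c\in(0,c^*)$, with $c^*$ the critical traveling-wave speed of the nonlocal Cauchy equation on $\mathbb{R}$, there exists a unique monotone $\phi_c$ with $\phi_c(-\infty)=1$, $\phi_c(0)=0$. Continuity and strict monotonicity of $c\mapsto \phi_c$ (pointwise) would follow by comparison, and $\phi_c\to 0$ uniformly on compacts as $c\uparrow c^*$. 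The main obstacle here is the singularity of the nonlocal term near $\xi=0$, where $\phi$ drops from a positive value to $0$; condition \textbf{(J1)} is precisely what is needed to control the outward flux across this jump.

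With $\phi_c$ in hand, define
\[
\mathcal{I}(c):=\mu\int_{-\infty}^0 \phi_c(\xi)\int_{-\xi}^{+\infty}J(s)\,ds\,d\xi,
\]
which by \textbf{(J1)} is finite for all $c\in(0,c^*)$. I would show $\mathcal{I}$ is continuous, strictly decreasing (profiles become thinner as $c$ increases), tends to $0$ as $c\uparrow c^*$, and stays bounded below by a positive constant as $c\downarrow 0$. A single intersection with the line $c\mapsto c$ then yields a unique $c_0\in(0,c^*)$ satisfying $c_0=\mathcal{I}(c_0)$. To transfer this to \eqref{main-fb}, I would construct, for any small $\varepsilon>0$, an upper solution of the form $\overline u(t,x)=\phi_{c_0+\varepsilon}(x-(c_0+\varepsilon)t-X_0)$ on the appropriate moving half-line, and, after waiting long enough that $u(t,\cdot)$ is close to $1$ on a long interval (which is possible because spreading is assumed), a symmetric lower solution based on $\phi_{c_0-\varepsilon}$. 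Sandwiching via the comparison principle for \eqref{main-fb} proved in \cite{CDLL2019} gives $c_0-\varepsilon\le\liminf h(t)/t\le\limsup h(t)/t\le c_0+\varepsilon$; sending $\varepsilon\to 0$ and arguing symmetrically for $g$ completes part (i).

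For part (ii), I would argue by contradiction: suppose $\limsup h(t)/t<+\infty$. Pick $L$ large; since spreading occurs, $u(t,x)\ge\theta>0$ on $[g(t)+L,h(t)-L]$ for $t$ large, by the nonlocal Harnack-type estimates implicit in \cite{CDLL2019} together with the convergence $u\to1$ locally uniformly. Plugging into the free boundary law,
\[
h'(t)\ge \mu\theta\int_{g(t)+L}^{h(t)-L}\int_{h(t)}^{+\infty}J(x-y)\,dy\,dx,
\]
and as $h(t)-g(t)\to+\infty$ the right-hand side converges to $\mu\theta\int_{-\infty}^0\int_0^{+\infty}J(x-y)\,dy\,dx$, which is $+\infty$ precisely because \textbf{(J1)} fails. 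Hence $h'(t)\to+\infty$, which contradicts the boundedness of $h(t)/t$. The most delicate step is the uniform bulk lower bound $u\ge\theta$, which I expect to derive by combining the spreading convergence with a contradiction/compactness argument exploiting the strong positivity properties of the nonlocal operator; this will be the principal technical hurdle of the accelerating case.
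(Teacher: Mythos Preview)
Your plan for part~(i) is essentially the paper's: build a one-parameter family of monotone semi-waves $\phi^c$, locate $c_0$ as the unique fixed point of $c\mapsto\mathcal I(c)$, then sandwich $(u,g,h)$ between barriers built from the semi-wave. Two technical points deserve care. First, your existence scheme presumes a finite minimal Cauchy speed $c^*$, but $c^*$ exists only under the thin-tail condition \textbf{(J2)}; when \textbf{(J1)} holds and \textbf{(J2)} fails there is no traveling wave at all, so the interval $(0,c^*)$ becomes $(0,\infty)$ and you must either prove directly that semi-waves exist for every $c>0$ and that $\mathcal I(c)\to 0$ as $c\to\infty$, or (as the paper does) approximate $J$ by compactly supported kernels and pass to the limit. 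Second, for the comparison step the paper does not use $\phi_{c_0\pm\varepsilon}$ but rather the single profile $\phi^{c_0}$ scaled by $(1\pm\varepsilon)$ and run at speed $c_0(1\pm 2\varepsilon)$; the free-boundary inequality is then immediate from $(1+\varepsilon)c_0<(1+2\varepsilon)c_0$ and the PDE inequality from the KPP property $(1+\varepsilon)f(s)\ge f((1+\varepsilon)s)$. Your variant with $\phi_{c_0+\varepsilon}$ satisfies the PDE with equality rather than strict inequality, and since $\phi_{c_0+\varepsilon}\le 1$ it cannot dominate $u(T,\cdot)$ when $\|u_0\|_\infty>1$; you would need an extra multiplicative factor anyway.

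Your argument for part~(ii) is genuinely different from the paper's and contains a real gap---the one you flag yourself. The spreading conclusion of \cite{CDLL2019} gives only $u(t,x)\to 1$ \emph{locally uniformly} in $x$; it says nothing about $u$ in the moving window $[h(t)-R,\,h(t)-L]$ as $h(t)\to\infty$. There are no Harnack-type estimates available here: the nonlocal operator confers no spatial regularity, so a compactness/contradiction argument has nothing to bite on. The standard route to a uniform bulk lower bound is to slide a moving-front subsolution underneath $u$, but such subsolutions are built from semi-waves, and the whole point is that when \textbf{(J1)} fails no semi-wave for $J$ exists. The paper sidesteps this by approximating $J$ from below by compactly supported $J_n$ (which do satisfy \textbf{(J1)}), applying part~(i) to the auxiliary problems to obtain speeds $c_n$, proving $c_n\to\infty$ (otherwise one would extract in the limit either a traveling wave or a semi-wave for $J$, both impossible), and concluding via the comparison $h(t)\ge h_n(t)$ that $\liminf h(t)/t\ge c_n$ for every $n$. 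In short, the only known way to produce the lower bound your direct argument needs is the approximation that the paper uses instead of your argument.
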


As usual, when {\bf (J1)} holds, we call $c_0$ the spreading speed of \eqref{main-fb}. The proof of Theorem \ref{thm1} relies on the  existence of semi-wave solutions to \eqref{main-fb}. These are pairs $(c,\phi)\in (0, +\infty)\times C^1((-\infty, 0])$ determined by the following two equations:
\begin{equation}\label{semi-wave}
\begin{cases}
\displaystyle d \int_{-\infty}^0 J(x-y) \phi(y) dy - d \phi(x)+ c\phi'(x) + f(\phi(x)) =0, &  -\infty < x< 0,\\
\displaystyle \phi(-\infty) = 1,\ \ \phi(0) =0,
\end{cases}
\end{equation}
and
\begin{equation}\label{fby-0}
c=\mu\int_{-\infty}^{0}\int_{0}^{+\infty}J(x-y)\phi(x)dydx.
\end{equation}

If $(c,\phi)$ solves \eqref{semi-wave}, then we call $\phi$ a semi-wave with speed $c$, since  the function $v(t,x): = \phi (x - ct)$ satisfies
\begin{equation*}
\begin{cases}
\displaystyle v_t = d \int_{-\infty}^{ct} J(x-y) v(t,y) dy - d v(t,x ) + f(v(t,x)), &  t>0, \  x<ct,\\
\displaystyle v(t, -\infty) =1, \ \  v(t, ct) =0, & t>0.\end{cases}
\end{equation*}
However, only the semi-wave satisfying \eqref{fby-0}  meets the free boundary condition along the moving front $x=ct$, and hence useful for determining the long-time dynamical behaviour of \eqref{main-fb}.

\begin{theorem}\label{thm2}
Suppose that {\bf (J)} and {\bf (f3)} are satisfied. Then \eqref{semi-wave}-\eqref{fby-0} has a solution pair $(c, \phi)=(c_0,\phi^{c_0})\in (0, +\infty)\times C^1((-\infty, 0])$ with $\phi^{c_0}(x)$ nonincreasing in $x$ if and only if {\bf (J1)} holds. Moreover, when {\bf (J1)} holds, there exists a unique such solution pair, and 
$\phi^{c_0}(x)$ is strictly decreasing in $x$.
\end{theorem}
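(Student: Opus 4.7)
The plan is a two-stage existence scheme: for each fixed $c>0$ first construct a nonincreasing semi-wave profile $\phi_c$ solving \eqref{semi-wave}, and then determine $c=c_0$ by solving the scalar equation $\mathcal G(c)=c$, where
\[
\mathcal G(c):=\mu\int_{-\infty}^0\int_0^{+\infty} J(x-y)\phi_c(x)\,dy\,dx.
\]
The ``only if'' direction of the equivalence with \textbf{(J1)} is easy and I would address it first. If a solution pair $(c_0,\phi^{c_0})$ exists, then $\phi^{c_0}(-\infty)=1$ guarantees that for any small $\varepsilon>0$ there is an $M>0$ with $\phi^{c_0}\geq 1-\varepsilon$ on $(-\infty,-M]$; after computing $\int_0^{+\infty}J(x-y)\,dy=\int_{-\infty}^x J(u)\,du$, we get $c_0\geq \mu(1-\varepsilon)\int_{-\infty}^{-M}\int_{-\infty}^x J(u)\,du\,dx$, so finiteness of $c_0$ forces \textbf{(J1)}.

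For the ``if'' direction, assume \textbf{(J1)}. To produce $\phi_c$ for each fixed $c>0$, I would study the truncated problem on $(-L,0)$, with $\phi\equiv 1$ on $(-\infty,-L]$ and $\phi\equiv 0$ on $[0,+\infty)$, and build a nonincreasing solution $\phi_c^L$ by monotone iteration from the upper solution $\equiv 1$; the KPP hypothesis that $f(u)/u$ is nonincreasing is used to control the iteration. Because the only derivative in \eqref{semi-wave} is the first-order term $c\phi'$, a priori bounds on $\phi_c^L$ and its derivative come directly from the equation, and one obtains enough compactness (Helly's selection for monotone functions plus equicontinuity of the derivative via the equation) to pass $L\to\infty$ along a subsequence and obtain $\phi_c\in C^1((-\infty,0])$. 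The boundary limit $\phi_c(-\infty)=1$ is then extracted by integrating \eqref{semi-wave} over $(-\infty,x)$ and using the KPP structure to rule out the alternative $\phi_c(-\infty)=0$. Uniqueness and strict monotonicity of $\phi_c$ are obtained by a standard sliding argument combined with the nonlocal strong maximum principle associated with the operator $d\int J(x-y)\cdot\,dy-d$.

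With $\phi_c$ in hand, comparison gives that $c\mapsto\phi_c(x)$ is continuous and nonincreasing pointwise: if $c_1<c_2$, then $\phi_{c_1}$ is a super-solution at speed $c_2$, forcing $\phi_{c_2}\le\phi_{c_1}$. Consequently $\mathcal G$ is continuous and nonincreasing in $c$. Under \textbf{(J1)}, $\phi_c\le 1$ yields $\mathcal G(c)\le M_0:=\mu\int_{-\infty}^0\int_{-\infty}^x J(u)\,du\,dx<+\infty$, so $\mathcal G(c)<c$ whenever $c>M_0$. As $c\to 0^+$, monotone convergence gives $\phi_c\uparrow\phi_0$, the (unique up to shift) stationary semi-wave, from which $\mathcal G(c)\to M_0>0$, and hence $\mathcal G(c)>c$ for all small $c$. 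By the intermediate value theorem there exists $c_0\in(0,M_0)$ with $\mathcal G(c_0)=c_0$. Uniqueness of $c_0$ and of $\phi^{c_0}$ follow by promoting the monotonicity of $c\mapsto\phi_c$ to a strict inequality at $c=c_0$ via the strong maximum principle for the nonlocal operator, combined with the strict form of the KPP condition near $\phi=1$.

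The step I expect to be hardest is the construction of $\phi_c$, in two places: first, obtaining enough compactness to pass $L\to\infty$ without any diffusive regularisation (all control on $\phi_c^L$ must come from the first-order term $c\phi'$ and the nonlocal operator); and second, proving the correct left-end behaviour $\phi_c(-\infty)=1$, which is not immediate since the nonlocal equation does not admit a phase plane. A secondary technical point will be establishing the limit $\mathcal G(0^+)=M_0$, which requires a careful translation/Helly argument to identify the pointwise limit of $\phi_c$ on the bulk of the half-line.
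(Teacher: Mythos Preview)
Your ``only if'' direction is essentially the same as the paper's (Lemma~\ref{Ji-nec}): monotonicity of $\phi^{c_0}$ plus $\phi^{c_0}(-\infty)=1$ forces finiteness of the double integral.

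For the ``if'' direction your route is genuinely different from the paper's. The paper does not attempt a direct construction under \textbf{(J1)}. Instead it first assumes the stronger thin-tail condition \textbf{(J2)}, so that Yagisita's minimal-speed traveling wave $\phi_*$ with speed $c_*$ is available; it then perturbs at the \emph{right} end (setting $\phi\equiv\sigma$ on $[0,\infty)$) and iterates \emph{upward} from the subsolution $\max\{\phi_*,\sigma\}$ to build $\phi_\sigma$, lets $\sigma\to 0$, and uses a recentering dichotomy to extract a nontrivial semi-wave for every $c\in(0,c_*)$. Only afterwards does it pass from \textbf{(J2)} to \textbf{(J1)} by approximating $J$ by compactly supported $J_n$ (Lemma~\ref{J1-suf}). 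Your scheme truncates at the \emph{left} end, iterating downward from the supersolution $1$, and aims to work directly under \textbf{(J1)}.

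There is a genuine gap, precisely at the step you flagged as hardest. Iterating from above on $(-L,0)$ gives a monotone family $\phi_c^L$ that \emph{decreases} as $L\to\infty$ (the boundary forcing from $\{\phi=1\}$ recedes), and nothing you wrote prevents the limit from being identically zero. Your proposed fix, ``integrate \eqref{semi-wave} and use the KPP structure to rule out $\phi_c(-\infty)=0$'', does not work: $\phi\equiv 0$ solves \eqref{semi-wave}, so no amount of integrating the limiting equation can exclude it. In fact the collapse \emph{does} occur when \textbf{(J2)} holds and $c\ge c_*$: the paper shows $\phi^c\to 0$ as $c\nearrow c_*$ (Theorem~\ref{thm-speed}), so your claim that a nontrivial $\phi_c$ exists ``for each fixed $c>0$'' is false in that regime. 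The idea you are missing is the dichotomy the paper uses (proof of Theorem~\ref{thm-semiwave-existence}): track $x_L$ where $\phi_c^L(x_L)=\tfrac12$; if $x_L\to-\infty$, recentering produces a monotone traveling wave on all of $\mathbb R$ with speed $c$, contradicting Proposition~\ref{tw} (no traveling wave if \textbf{(J2)} fails, and none with $c<c_*$ if \textbf{(J2)} holds). Hence $x_L$ stays bounded and the limit is a genuine semi-wave. Without this input from the traveling-wave theory, your construction cannot close.

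Two smaller points. First, your claim $\mathcal G(0^+)=M_0$ is not correct as stated (the limit of $\phi_c$ as $c\to 0^+$ is not the constant $1$), but you do not need it: since $\mathcal G$ is nonincreasing and $\mathcal G(c_1)>0$ for some $c_1>0$, one has $\mathcal G(c)\ge \mathcal G(c_1)>c$ for all $c\in(0,\mathcal G(c_1))$, which is how the paper argues. Second, for uniqueness of $c_0$ the paper gets strict monotonicity of $c\mapsto c-\mathcal G(c)$ directly from the strict monotonicity of $\phi^c$ in $c$ (via the strong maximum principle, Lemma~\ref{lm-strongMP}); your sketch is consistent with this but should be made explicit.
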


The uniquely determined $c_0>0$ in Theorem \ref{thm2} is the spreading speed for \eqref{main-fb} given in part (i) of Theorem \ref{thm1}.

\bigskip

To put these results into perspective, we now recall some related results for the corresponding nonlocal diffusion problem of \eqref{Cauchy-local}, namely
\begin{equation}\label{Cauchy}
\begin{cases}
\displaystyle u_t=d\int_{\mathbb R}J(x-y)u(t,y)dy-du(t,x)+f(u),
& t>0,~x\in\mathbb R,\\
\displaystyle u(0,x)=u_0(x), &  x\in\mathbb R.
\end{cases}
\end{equation}
Problem \eqref{Cauchy} and its many variations have been extensively studied in the literature; see, for example, \cite{AC, Bates1997, BatesJMAA2007, BCV2016, BGHP, ChenXF1997, CDM2013, CD, FT, Garnier, Hutson2003JMB, KLS, RSZ, SLW, Yagisita2009} and the references therein. In particular, if {\bf (J)} and {\bf (f3)} are satisfied, and if the nonnegative initial function $u_0$ has non-empty compact support, then the basic long-time dynamical behaviour of \eqref{Cauchy} is given by
\[
\lim_{t\to\infty} u(t,x)=1 \ \ \mbox{ locally uniformly for $x\in\mathbb R$}.
\]
To understand the fine spreading behaviour of \eqref{Cauchy}, one examines the level set
\[
E_\lambda(t):=\{x\in\mathbb R: u(t,x)=\lambda\} \mbox{ with fixed } \lambda\in (0,1),
\]
by considering the large time behaviour of
\[
x_\lambda^+(t):=\sup E_\lambda(t) \ \ \mbox{ and } \;\; x_\lambda^-(t)=\inf E_\lambda(t).
\]

For this purpose, the following additional condition on the kernel function, apart from {\bf (J)},  is important:
\begin{description}
\item[(J2)] \ \ There exists $\lambda>0$ such that
\[
\int_{-\infty}^{+\infty} J(x)e^{\lambda x}dx<\infty.
\]
\end{description}

Yagisita \cite{Yagisita2009} has proved the following result on traveling wave solutions to \eqref{Cauchy}:
\begin{proposition}\label{tw}
Suppose that $f$ satisfies {\bf (f3)} and $J$ satisfies {\bf (J)}. If additionally $J$ satisfies {\bf (J2)},  then there is a constant $c_*>0$ such that \eqref{Cauchy}
has a traveling wave solution with speed $c$ if and only if $c\geq c_*$.  To be more precise,  the problem
\begin{equation}\label{tw-cauchy}
\begin{cases}
\displaystyle d \int_{\mathbb R} J(x-y) \phi(y) dy- d\phi(x) + c \phi'(x) + f(\phi(x)) =0,  &  x\in\mathbb R,\\
\phi(-\infty) =1,\ \phi(+\infty) =0
\end{cases}
\end{equation}
has a solution $\phi\in L^\infty(\mathbb R)$ which is nonincreasing if and only if $c\geq c_*$. Moreover, for each $c\geq c_*$, the solution has the following property \footnote{This follows easily from the proof in \cite{Yagisita2009}.}: $\phi\in C^1(\mathbb R)$. On the other hand, if  $J$ does not satisfy {\bf (J2)},
then \eqref{Cauchy} does not have a traveling wave solution, that is, for any constant $c$, \eqref{tw-cauchy}
has no solution $\phi\in L^\infty(\mathbb R)$ which is nonincreasing \footnote{Theorem 2 in \cite{Yagisita2009} actually provides a stronger nonexistence result.}.
\end{proposition}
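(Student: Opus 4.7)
The plan is to follow a now-standard route for KPP-type nonlocal diffusion problems based on the characteristic equation at the unstable steady state $u=0$, combined with a super/sub-solution and monotone iteration scheme; Yagisita's original argument packages this through a discrete-time semigroup, but the underlying ingredients are the same.

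\textbf{Defining $c_*$.} Linearizing \eqref{tw-cauchy} at $\phi=0$ and substituting the ansatz $\phi(x)=e^{-\lambda x}$ produces the dispersion relation
\[
\Psi(\lambda,c):=d\bigl(\hat J(\lambda)-1\bigr)-c\lambda+f'(0)=0,\qquad \hat J(\lambda):=\int_{\R}J(z)e^{\lambda z}dz.
\]
Under \textbf{(J2)}, $\hat J$ is finite and $C^\infty$ strictly convex on a maximal interval $[0,\lambda_{\max})$, with $\hat J(0)=1$ and $\hat J'(0)=0$ from the evenness of $J$; so $\Psi(0,c)=f'(0)>0$, $\partial_\lambda\Psi(0,c)=-c$, and $\partial_\lambda^2\Psi>0$. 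A convexity/monotonicity argument in $c$ then isolates a unique critical speed $c_*\in(0,\infty)$ such that $\Psi(\cdot,c)$ has two positive roots $\lambda_1(c)<\lambda_2(c)$ for $c>c_*$, a unique double root $\lambda_*$ at $c=c_*$, and no positive root for $0<c<c_*$.

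\textbf{Existence for $c\geq c_*$.} For $c>c_*$ I take the upper solution $\overline\phi(x)=\min\{1,e^{-\lambda_1(c)x}\}$, readily verified from $\Psi(\lambda_1,c)=0$ together with $f(u)\leq f'(0)u$, and the lower solution $\underline\phi(x)=\max\{0,e^{-\lambda_1(c)x}-Ke^{-\eta x}\}$ with $\eta\in(\lambda_1,\min\{2\lambda_1,\lambda_2\})$ and $K$ large, using the refined KPP bound $f(u)\geq f'(0)u-Mu^{1+\gamma}$ near $0$. Rewriting \eqref{tw-cauchy} as the fixed-point equation
\[
\phi(x)=\frac{1}{c}\int_x^\infty e^{-\beta(y-x)/c}\bigl[(d+\beta)\phi(y)-d(J*\phi)(y)-f(\phi(y))\bigr]dy,
\]
with $\beta>0$ chosen so that the bracket is monotone in $\phi$ between $\underline\phi$ and $\overline\phi$, the iteration started at $\overline\phi$ produces a pointwise decreasing sequence whose limit is a nonincreasing wave with $\phi(-\infty)=1$ and $\phi(+\infty)=0$. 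The critical case $c=c_*$ follows by compactness: take $c_n\downarrow c_*$, normalize $\phi_{c_n}(0)=1/2$, extract uniform $C^1$ bounds directly from the equation, and pass to an Arzel\`a--Ascoli limit; the normalization rules out the trivial constants.

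\textbf{Nonexistence, regularity, and main obstacle.} For $0<c<c_*$ under \textbf{(J2)}, any nonincreasing bounded wave has $\phi(+\infty)=0$, and a Tauberian/Ikehara argument applied to the Laplace transform $\int_0^\infty\phi(x)e^{-\lambda x}dx$ forces $\phi(x)\sim Ce^{-\alpha x}$ (up to a possible polynomial prefactor) with $\alpha$ a positive real root of $\Psi(\cdot,c)$, contradicting their absence; speeds $c\le 0$ are excluded by integrating the equation against a test function and using monotonicity of $\phi$. When \textbf{(J2)} fails, evenness of $J$ gives $\int_0^\infty J(z)e^{\lambda z}dz=\infty$ for every $\lambda>0$, and the bound $(J*\phi)(x)\geq \phi(0)\int_x^\infty J(s)ds$ for $x>0$ then decays too slowly to balance any exponentially decaying $\phi$, again yielding an Ikehara-type contradiction. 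The $C^1$ claim is immediate from solving the equation for $\phi'$, since the right-hand side is a continuous function of $x$. I expect the hardest step to be the Tauberian/Ikehara analysis establishing that a nonincreasing bounded wave must decay \emph{exactly} exponentially at $+\infty$: this is precisely the bridge that converts the algebraic obstruction in $\Psi$ (or the divergence of $\hat J$) into genuine nonexistence of admissible profiles.
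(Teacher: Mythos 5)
The first thing to observe is that the paper contains no proof of Proposition \ref{tw}: it is imported wholesale from Yagisita \cite{Yagisita2009}, with two footnotes noting that the $C^1$ regularity ``follows easily from the proof'' there and that the fat-tail nonexistence is Yagisita's Theorem 2. Yagisita's argument runs through the abstract theory of monotone discrete-time semiflows in the spirit of Weinberger, together with an approximation of the kernel, and never constructs explicit ordered super- and subsolutions. Your sketch instead follows the classical Carr--Chmaj/Coville route (dispersion relation $\Psi(\lambda,c)=d(\hat J(\lambda)-1)-c\lambda+f'(0)$ with $\hat J(\lambda)=\int_{\mathbb R}J(z)e^{\lambda z}dz$, ordered super/subsolutions and monotone iteration for $c>c_*$, compactness for $c=c_*$, Ikehara for nonexistence). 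That is a legitimate and more self-contained route where it applies, so the real comparison is between your sketch and that strand of the literature rather than anything in this paper.

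The step that genuinely fails under the stated hypotheses is the subsolution. Condition {\bf (f3)} gives only $f\in C^1$ with $f(u)/u$ nonincreasing; this yields $f(u)\le f'(0)u$ (which is all the supersolution needs) but from below only $f(u)=f'(0)u+o(u)$, not the ``refined KPP bound'' $f(u)\ge f'(0)u-Mu^{1+\gamma}$ that you invoke. The two-exponential subsolution $e^{-\lambda_1 x}-Ke^{-\eta x}$ with $\eta>\lambda_1$ requires $f(u)-f'(0)u=O(u^{\eta/\lambda_1})$, i.e.\ a H\"older-type control of $f'$ near $0$ that {\bf (f3)} does not provide (take $f$ with $f(u)-f'(0)u\sim -u/\log(1/u)$). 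The natural repair --- replace $f'(0)$ by $f'(0)-\epsilon$ and build the subsolution from the smaller root $\lambda_1^\epsilon$ of the perturbed relation --- fails for a structural reason: $\lambda_1^\epsilon<\lambda_1$, so the subsolution decays strictly more slowly than the supersolution $e^{-\lambda_1 x}$ and no translation or rescaling orders them near $+\infty$; the monotone iteration is then no longer pinched away from $0$. This is exactly the point where the semiflow/approximation machinery earns its keep. Two secondary, fixable issues: the ``unique double root at $c=c_*$'' picture can fail when the infimum defining $c_*$ is attained at the endpoint of the interval on which $\hat J$ is finite; and both the Ikehara argument for $0<c<c_*$ and the fat-tail nonexistence first require proving that any bounded nonincreasing front decays exponentially at $+\infty$ (the Carr--Chmaj step), which you correctly flag as the crux but do not supply.
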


Condition {\bf (J2)} is often called a ``thin tail" condition for $J$, and if it is not satisfied, then $J$ is said to have a ``fat tail". When $f$ satisfies {\bf (f3)}, and $J$ satisfies {\bf (J)} and {\bf (J2)}, it is well known (see, for example, \cite{W82}) that
\bes\label{c*}
\lim_{t\to\infty}\frac{|x_\lambda^{\pm}(t)|}{t}=c_*,
\ees
with $c_*$ given by Proposition \ref{tw}. On the other hand, if {\bf (f3)} and {\bf (J)} hold but {\bf (J2)} is not satisfied, then it follows from Theorem 6.4 of \cite{W82}  that $|x_\lambda^{\pm}(t)|$ grows faster than any linear function of $t$ as $t\to\infty$, namely,
\[
\lim_{t\to\infty}\frac{|x_\lambda^{\pm}(t)|}{t}=\infty.
\]
 Such a behaviour is usually called ``accelerating spreading". See also \cite{AC, BGHP, CR2013, FF, FT, Garnier, HR2010} and references therein for further progress on this and related questions.

We can easily show  that {\bf (J2)} implies {\bf (J1)}. Indeed, from
\[
a(x):=\int_0^\infty J(x-y)dy=\int_{-\infty}^xJ(z)dz=\int_{-x}^\infty J(z)dz \mbox{ for } x\leq 0,
\]
we obtain
\[
a(x)\leq e^{-\lambda |x|} \int_{|x|}^\infty J(z)e^{\lambda z}dz\leq e^{-\lambda|x|}\int_{-\infty}^\infty J(z)e^{\lambda z}dz,
\]
which clearly implies \[
\int_{-\infty}^0a(x)dx<\infty,
\]
i.e., {\bf (J1)} holds. On the other hand, it is easily checked that $J(x)=(1+x^2)^{-\sigma}$ with $\sigma>1$ satisfies {\bf (J1)} but not {\bf (J2)}.

Therefore, for $f$ satisfying {\bf (f3)}, there exist kernel functions $J$ satisfying {\bf (J)} and {\bf (J1)} but not {\bf (J2)} such that the free boundary problem \eqref{main-fb} spreads linearly with speed $c_0$, but the corresponding problem \eqref{Cauchy} has accelerating spreading.

Theorem \ref{thm1} indicates that for \eqref{main-fb}, under conditions {\bf (f3)} and {\bf (J)}, accelerating spreading happens exactly when {\bf (J1)} is not satisfied.
In sharp contrast, let us recall that, for \eqref{1}, which is the corresponding local diffusion problem of \eqref{main-fb}, when spreading happens, the spreading speed is always finite; see \cite{BDK, DL2010, DLou2015, DMZ}.

More can be said about the relationship between \eqref{main-fb} and \eqref{Cauchy}. For the local diffusion versions of \eqref{main-fb} and \eqref{Cauchy}, namely \eqref{1} and \eqref{Cauchy-local}, it is known that as $\mu\to+\infty$, the spreading speed of \eqref{1} converges to the spreading speed of \eqref{Cauchy-local} (see \cite{DLou2015}), and moreover, it follows from \cite{DG2012} that the Cauchy problem \eqref{Cauchy-local} can be viewed as the limiting problem of the free boundary problem \eqref{1} as $\mu\to+\infty$. Here we show that similar results hold for the nonlocal diffusion problems \eqref{main-fb} and \eqref{Cauchy}; see Theorems \ref{thm5.1}, \ref{thm5.2} and \ref{thm5.3} for details.

The rest of this paper is organized as follows. In Section 2, we prove Theorem \ref{thm2} on the semi-wave solution, which paves the ground for this research. In Section 3 we prove the first part of Theorem \ref{thm1}, by making use of the semi-wave  solution established in Section 2. Section 4 is devoted to the proof of the second part of Theorem \ref{thm1}, on accelerating spreading, by making use of the first part of the theorem proved in Section 3 and an approximation argument.
In Section 5, we consider the limiting profile of \eqref{main-fb} and its semi-wave solution as $\mu\to+\infty$.

\section{A unique semi-wave with the desired speed}

The purpose of this section is to prove Theorem \ref{thm2}. We first prove the existence  of a family of semi-waves to \eqref{main-fb}, namely for any speed $c$ in a certain range, there exists a unique  positive solution  $\phi=\phi^c\in C^1((-\infty, 0])$ to the problem \eqref{semi-wave}.
We will further show that for any given $\mu>0$, there exists a unique $c=c_0$ satisfying
\begin{equation}\label{fby}
c=\mu\int_{-\infty}^{0}\int_{0}^{+\infty}J(x-y)\phi^{c}(x)dydx.
\end{equation}

As we will see later, the unique solution $\phi^{c_0}(x)$ is strictly decreasing in $x$ for $x\in (-\infty, 0]$, and hence from the condition {\bf (J1)}, it is easily seen that
\begin{equation}\label{hat-c}
c_0<\mu c(J) \ \ \ \ \mbox{ with }\ \  c(J):=\int_{-\infty}^0\int_0^{+\infty} J(x-y)dydx.
\end{equation}

In the following, we will first prove the existence and uniqueness of $(c_0, \phi^{c_0})$ for the case that $J$ satisfies {\bf (J2)}, and then use an approximation argument to show that the conclusion also holds when $J$ satisfies {\bf (J1)}. It is
 easy to show that {\bf (J1)} is a necessary condition for the existence of such a pair $(c_0, \phi^{c_0})$.

\subsection{A perturbed problem} Suppose that {\bf (J2)} holds. Fix $\sigma\in (0, 1)$, $c\in (0, c_*)$, and consider the auxiliary problem
\begin{equation}\label{semi-wave-sigma}
\begin{cases}
\displaystyle d \int_{-\infty}^{+\infty} J(x-y) \phi(y) dy - d \phi(x)+ c\phi'(x) + f(\phi(x)) =0, &  -\infty < x< 0,\\
\displaystyle \phi(-\infty) = 1,\ \ \phi(x) =\sigma, & 0\leq x<+\infty.
\end{cases}
\end{equation}
We will show that \eqref{semi-wave-sigma} has a solution $\phi_\sigma$, which converges to the unique solution of \eqref{semi-wave} as $\sigma\to 0$.

If $\phi$ solves \eqref{semi-wave-sigma}, then clearly, for $x<0$,
\[
-c\phi'(x)= d \int_{-\infty}^0 J(x-y) \phi(y) dy +d \int_0^{+\infty} J(x-y) \sigma dy - d \phi(x) + f(\phi(x)).
\]
Choose $M>0$ large so that
\[
\mbox{ $u\mapsto \tilde f(u):=(cM-d)u+f(u)$ is increasing for $u\in [0,1]$,}
\]
and denote
\[
a(x)=\int_0^{+\infty}J(x-y)dy=\int_{-\infty}^x J(y)dy.
\]
 Then for $x<0$,
\[
-c(e^{-Mx}\phi)'=e^{-Mx}\left[d\!\! \int_{-\infty}^0 J(x-y) \phi(y) dy +d \sigma a(x)+  \tilde f(\phi(x))\right],
\]
and hence
\[
\phi(x)=e^{Mx}\sigma+\frac{e^{Mx}}{c}\int_x^0\!\!e^{-M\xi}\left[d\!\! \int_{-\infty}^0\!\! J(\xi-y) \phi(y) dy +d \sigma a(\xi)+ \tilde f(\phi(\xi))\right]d\xi.
\]

We now define an operator $A$ over
\[
\Omega:=\big\{\phi\in C(\mathbb R): 0\leq \phi\leq 1\big\}
\]
by
\[
A[\phi](x)=\left\{\begin{array}{ll}\displaystyle e^{Mx}\sigma+\frac{e^{Mx}}{c}\!\int_x^0\!\!e^{-M\xi}\left[d\!\! \int_{-\infty}^0\!\! J(\xi-y) \phi(y) dy +d \sigma a(\xi)
+ \tilde f(\phi(\xi))\right]d\xi, & x<0,\\
\sigma,& x\geq 0.
\end{array}\right.
\]
Then $\phi\in\Omega$ solves \eqref{semi-wave-sigma} if and only if $\phi$ is a fixed point of $A$ in $\Omega$.

Let $\phi_*$ denote a traveling wave solution with minimal speed $c_*$ given by Proposition \ref{tw}. Clearly $\phi_*\in\Omega$, and by a suitable translation we may assume that
\[
\phi_*(0)=\sigma.
\]
If its dependence on $\sigma$ need to be stressed, we will write $\phi_*(x)=\phi_{*\sigma}(x)$.
Then define
\[
\phi_*^{\sigma}(x)=\max\{\phi_*(x),\sigma\}=\max\{\phi_{*\sigma}(x),\sigma\}=\left\{\begin{array}{ll} \phi_{*\sigma}(x), & x<0,\\
\sigma,& x\geq 0.
\end{array}\right.
\]
We show next that
\begin{equation}\label{ls}
A[\phi_*^{\sigma}](x)\geq \phi_*^{\sigma}(x),\ \ \  A[1](x)<1\; \mbox{ for \ } x\in\mathbb R.
\end{equation}
Evidently $A[\phi_*^{\sigma}](x)=\phi_*^{\sigma}(x)=\sigma$ for $x\geq 0$. For $x<0$, we have
\begin{equation*}
\begin{array}{ll}
\displaystyle A[\phi_*^{\sigma}](x) &\displaystyle= e^{Mx}\sigma+\frac{e^{Mx}}{c}\!\int_x^0\!\!e^{-M\xi}\left[d\!\! \int_{-\infty}^0\!\! J(\xi-y) \phi_*(y) dy +d \sigma a(\xi)+ \tilde f(\phi_*(\xi))\right]d\xi\smallskip\\
&\geq \displaystyle e^{Mx}\sigma+\frac{e^{Mx}}{c}\!\int_x^0\!\!e^{-M\xi}\left[d\!\! \int_{-\infty}^{+\infty}\!\! J(\xi-y) \phi_*(y) dy + \tilde f(\phi_*(\xi))\right]d\xi \smallskip\\
&=\displaystyle e^{Mx}\sigma+\frac{e^{Mx}}{c}\!\int_x^0\!\!e^{-M\xi}\big[cM\phi_*(\xi)-c_*\phi_*'(\xi)\big]d\xi \smallskip\\
&>\displaystyle e^{Mx}\sigma+\frac{e^{Mx}}{c}\!\int_x^0\!\!e^{-M\xi} \big[cM\phi_*(\xi)-c\phi_*'(\xi)\big]d\xi \smallskip\\
&= \displaystyle e^{Mx}\sigma-e^{Mx}\int_x^0[e^{-M\xi} \phi_*(\xi)]'d\xi= \phi_*(x)=\phi_*^{\sigma}(x).
\end{array}
\end{equation*}
Therefore the first inequality in \eqref{ls} holds. To prove the second inequality, again we only need to check it for $x<0$, where we have
\begin{equation*}
\begin{array}{ll}
\displaystyle A[1](x) &\displaystyle= e^{Mx}\sigma+\frac{e^{Mx}}{c}\!\int_x^0\!\!e^{-M\xi}\left[d\!\! \int_{-\infty}^0\!\! J(\xi-y) dy +d \sigma a(\xi) + \tilde f(1)\right]d\xi\smallskip\\
&< \displaystyle e^{Mx}\sigma+\frac{e^{Mx}}{c}\!\int_x^0\!\!e^{-M\xi}\left[d\!\! \int_{-\infty}^{+\infty}\!\! J(\xi-y)  dy  + \tilde f(1))\right]d\xi \smallskip\\
&=\displaystyle e^{Mx}\sigma+\frac{e^{Mx}}{c}\!\int_x^0\!\!e^{-M\xi}cMd\xi \smallskip\\
&= \displaystyle 1+(\sigma-1)e^{Mx}<1.
\end{array}
\end{equation*}
This proves \eqref{ls}.

We now define inductively
\[
\phi_0(x)=\phi_*^{\sigma}(x),\; \phi_{n+1}(x)=A[\phi_n](x)=A^n[\phi_*^{\sigma}](x),\; n=0,1,2,..., x\in\mathbb R.
\]
 The monotonicity of $\tilde f$ implies that the operator $A$ is monotone increasing, namely
\[
\phi, \tilde\phi\in\Omega \mbox{ and } \phi\leq \tilde\phi \mbox{ imply } A[\phi](x)\leq A[\tilde\phi](x).
\]
Using this property of $A$ and  \eqref{ls}
we obtain
\[
\phi_0(x)\leq  \phi_{n}(x)\leq \phi_{n+1}(x)<1 \mbox{ for }  n=1,2,..., x\in\mathbb R.
\]
We now define
\[
\phi_\sigma(x):=\lim_{n\to\infty} \phi_n(x).
\]
Clearly $\phi_\sigma(x)=\sigma$ for $x\geq 0$, and for $x<0$, by the Lebesque dominated convergence theorem, we deduce from $\phi_{n+1}(x)=A[\phi_n](x)$ that
\[
\phi_\sigma(x)=A[\phi_\sigma](x).
\]
Since $\phi_*^{\sigma}(x)=\phi_0(x)\leq \phi_\sigma(x)\leq 1$ and $\phi_0(-\infty)=1$, we necessarily have $\phi_\sigma(-\infty)=1$. From the expression of $A[\phi_\sigma](x)$ and $\phi_\sigma(x)=A[\phi_\sigma](x)$, we see that $\phi_\sigma'(x)$ exists and is continuous for $x<0$, and hence $\phi=\phi_\sigma$
satisfies \eqref{semi-wave-sigma}.

We have thus proved the following conclusion.

\begin{lemma} Suppose that {\bf (J2)} holds. Then
for any $\sigma\in (0,1)$ and $c\in (0, c_*)$,
 \eqref{semi-wave-sigma} has a solution $\phi_\sigma$, which can be obtained by an iteration process.
\end{lemma}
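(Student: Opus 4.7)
The plan is to convert the semi-wave equation \eqref{semi-wave-sigma} into a fixed-point problem on the cone $\Omega=\{\phi\in C(\mathbb R):0\le\phi\le 1\}$, and obtain the solution as the limit of a monotone iteration. First I would fix $M>0$ large enough that $\tilde f(u):=(cM-d)u+f(u)$ is nondecreasing on $[0,1]$. Multiplying the equation for $\phi$ by the integrating factor $e^{-Mx}$ and integrating on $[x,0]$ using the prescribed value $\phi(0)=\sigma$, one sees that a $\phi\in\Omega$ solves \eqref{semi-wave-sigma} on $(-\infty,0)$ if and only if $\phi=A[\phi]$, where $A$ is the operator displayed above. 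The choice of $M$ guarantees that $A$ is monotone increasing on $\Omega$, since $\tilde f$ is monotone and $J\ge 0$.

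Next I would construct an ordered sub/super-solution pair. The natural subsolution is built from the minimal-speed traveling wave $\phi_*$ given by Proposition \ref{tw}, translated so that $\phi_*(0)=\sigma$, and then set $\phi_*^{\sigma}(x)=\max\{\phi_*(x),\sigma\}$. To verify $A[\phi_*^{\sigma}]\ge \phi_*^{\sigma}$ on $(-\infty,0)$, I would drop the nonnegative term $d\sigma a(\xi)$, enlarge the nonlocal integral from $(-\infty,0)$ to all of $\mathbb R$, invoke the traveling wave equation for $\phi_*$ to rewrite the integrand as $cM\phi_*(\xi)-c_*\phi_*'(\xi)$, and then use $c<c_*$ together with $\phi_*'\le 0$ to replace $c_*$ by $c$. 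What remains is the exact derivative $-[e^{-M\xi}\phi_*(\xi)]'$, whose integral on $[x,0]$ collapses, after adding the boundary contribution $e^{Mx}\sigma$, to $\phi_*(x)=\phi_*^{\sigma}(x)$. The supersolution inequality $A[1]<1$ is obtained analogously but more simply by enlarging the kernel integral and using $\tilde f(1)=cM-d$.

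Given that $\phi_*^{\sigma}\le A[\phi_*^{\sigma}]$ and $A[1]<1$, I would define inductively $\phi_0=\phi_*^{\sigma}$ and $\phi_{n+1}=A[\phi_n]$. Monotonicity of $A$ and the two inequalities give $\phi_0\le\phi_n\le\phi_{n+1}\le 1$ pointwise for all $n$, so $\phi_\sigma(x):=\lim_{n\to\infty}\phi_n(x)$ exists in $\Omega$. Lebesgue dominated convergence in the defining integral of $A$ passes to the limit to yield $\phi_\sigma=A[\phi_\sigma]$ on $(-\infty,0)$ and $\phi_\sigma\equiv\sigma$ on $[0,+\infty)$. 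The squeeze $\phi_*^{\sigma}\le\phi_\sigma\le 1$ together with $\phi_*(-\infty)=1$ gives $\phi_\sigma(-\infty)=1$; and since the right-hand side of $A[\phi_\sigma]$ is clearly $C^1$ in $x$ on $(-\infty,0)$, differentiating recovers the differential equation \eqref{semi-wave-sigma}.

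The main obstacle will be the subsolution comparison $A[\phi_*^{\sigma}]\ge \phi_*^{\sigma}$, because that is the only place where the restriction $c<c_*$ is used in an essential way: the strict gap between $c$ and $c_*$, combined with the monotonicity of $\phi_*$, is what allows the integrand in the key calculation to be recognized as an exact derivative. Everything else, including the monotone convergence, the boundary condition at $-\infty$, and the $C^1$ regularity, follows routinely from the properties of $J$ and $\tilde f$ and from the structure of the fixed-point equation.
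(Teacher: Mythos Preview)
Your approach is exactly the paper's: the same integrating-factor reformulation, the same operator $A$ on $\Omega$, the same sub/super pair $\phi_*^{\sigma}$ and $1$, and the same monotone iteration with the limit identified via dominated convergence and the squeeze for $\phi_\sigma(-\infty)=1$.

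One point of care in your subsolution verification: the phrase ``drop the nonnegative term $d\sigma a(\xi)$, enlarge the nonlocal integral from $(-\infty,0)$ to all of $\mathbb R$'' is misleading if read as two independent inequalities, since dropping $d\sigma a(\xi)$ decreases the integrand while enlarging the domain increases it. What actually makes the combined replacement
\[
d\int_{-\infty}^0 J(\xi-y)\phi_*(y)\,dy + d\sigma a(\xi)\ \ge\ d\int_{-\infty}^{+\infty} J(\xi-y)\phi_*(y)\,dy
\]
valid is the single observation that $\phi_*(y)\le\phi_*(0)=\sigma$ for $y\ge 0$, so $d\sigma a(\xi)=d\sigma\int_0^{+\infty}J(\xi-y)\,dy\ge d\int_0^{+\infty}J(\xi-y)\phi_*(y)\,dy$. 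With this clarified, the rest of your computation (invoking the wave equation for $\phi_*$, using $c<c_*$ and $\phi_*'\le 0$, and collapsing the exact derivative) goes through precisely as in the paper.
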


We show next that the $\phi_\sigma(x)$ obtained in this way is nonincreasing  in $x$ and nondecreasing in $\sigma$, at least for all small $\sigma>0$.

\begin{lemma}\label{lem:mono} There exists $\delta_0\in (0,1]$ such that
\[
\begin{cases}
\phi_\sigma(x)\geq \phi_\sigma(y) & \mbox{ if } x\leq y\leq 0,\; \sigma\in (0, \delta_0);\\
 \phi_{\sigma_1}(x)\leq \phi_{\sigma_2}(x) &\mbox{ if } 0<\sigma_1\leq \sigma_2<1,\; x\leq 0.
 \end{cases}
\]
\end{lemma}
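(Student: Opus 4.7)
The plan is to prove both conclusions by showing that the two monotonicity properties are preserved along the iteration $\phi_{n+1}=A[\phi_n]$ with $\phi_0=\phi_*^\sigma$ used just above to construct $\phi_\sigma$, and then to pass to the pointwise limit as $n\to\infty$. A convenient reformulation: setting $\widetilde\phi(y):=\phi(y)$ for $y<0$ and $\widetilde\phi(y):=\sigma$ for $y\geq 0$, the operator $A$ rewrites as
\[
A[\phi](x)=e^{Mx}\sigma+\frac{e^{Mx}}{c}\int_x^0 e^{-M\xi}G_\phi(\xi)\,d\xi,\qquad x<0,
\]
with $G_\phi(\xi):=d\int_{-\infty}^{+\infty}J(\xi-y)\widetilde\phi(y)\,dy+\tilde f(\phi(\xi))$; equivalently, $v=A[\phi]$ solves the linear ODE $cv'(x)=cMv(x)-G_\phi(x)$ on $(-\infty,0)$ with $v(0)=\sigma$.

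For the monotonicity in $x$, I would first establish by induction that $\phi_n\geq\sigma$ on $\R$ for every $n$; this uniform lower bound is what will supply the correct ``boundary value'' at $x=0$ in the final comparison. The base case follows from $\phi_0=\max\{\phi_{*\sigma},\sigma\}$. For the inductive step, if $\phi_n\geq\sigma$ then $G_{\phi_n}(\xi)\geq d\sigma+\tilde f(\sigma)=cM\sigma+f(\sigma)$, and substitution into the formula above yields, after a routine integration, $A[\phi_n](x)\geq\sigma+\frac{1-e^{Mx}}{cM}f(\sigma)\geq\sigma$, using $f(\sigma)\geq 0$ for $\sigma\in(0,1)$ from {\bf (f3)}; this already shows $\delta_0=1$ is admissible. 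Next, assuming inductively that $\phi_n$ is nonincreasing on $\R$, the evenness of $J$ together with the nondecreasing nature of $\tilde f$ makes $G_{\phi_n}$ nonincreasing on $\R$. For any $h>0$, the function $w(x):=\phi_{n+1}(x-h)-\phi_{n+1}(x)$ then satisfies
\[
cw'(x)=cMw(x)-\bigl[G_{\phi_n}(x-h)-G_{\phi_n}(x)\bigr]\leq cMw(x)\quad\text{for } x\leq 0,
\]
so $(e^{-Mx}w(x))'\leq 0$. Since $w(0)=\phi_{n+1}(-h)-\sigma\geq 0$ by the lower bound, integrating this differential inequality backwards gives $w(x)\geq e^{Mx}w(0)\geq 0$ on $(-\infty,0]$. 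Together with $\phi_{n+1}\equiv\sigma$ on $[0,+\infty)$ and $\phi_{n+1}\geq\sigma$ everywhere, this makes $\phi_{n+1}$ nonincreasing on all of $\R$; passing to the limit finishes the first conclusion for $\phi_\sigma$.

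For the monotonicity in $\sigma$, fix $0<\sigma_1\leq\sigma_2<1$ and compare the two iterations. Choosing $s_i$ so that $\phi_*(s_i)=\sigma_i$, one has $\phi_{*\sigma_i}(x)=\phi_*(x+s_i)$ with $s_1\geq s_2$ by the monotonicity of $\phi_*$; hence $\phi_{*\sigma_1}\leq\phi_{*\sigma_2}$ and $\phi_0^{\sigma_1}\leq\phi_0^{\sigma_2}$. Writing $A_\sigma$ for the operator attached to parameter $\sigma$, one reads directly from its definition that $(\sigma,\phi)\mapsto A_\sigma[\phi]$ is monotone nondecreasing in each argument on $(0,1)\times\Omega$: the dependence on $\sigma$ enters only through $e^{Mx}\sigma$ and $d\sigma\,a(\xi)$, while the monotonicity in $\phi$ uses $J\geq 0$ and the nondecreasing nature of $\tilde f$. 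An induction then gives $\phi_n^{\sigma_1}\leq\phi_n^{\sigma_2}$ for every $n$, and letting $n\to\infty$ yields $\phi_{\sigma_1}\leq\phi_{\sigma_2}$ on $\R$.

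The most delicate point is the backward Gronwall-type step in the $x$-monotonicity argument: it combines the sign of the right-hand side (supplied by the nonincreasing nature of $G_{\phi_n}$, i.e.\ by the inductive hypothesis on $\phi_n$) with the correct sign of $w$ at the endpoint $x=0$ (supplied by the uniform lower bound $\phi_n\geq\sigma$). Both inputs rely only on {\bf (J)}, {\bf (f3)}, and on the choice of $M$ so that $\tilde f$ is nondecreasing; in particular no restriction on $\sigma$ beyond $\sigma\in(0,1)$ is needed, so the lemma holds with $\delta_0=1$.
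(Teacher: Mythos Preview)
Your argument is correct. The treatment of the $\sigma$-monotonicity is essentially the same as the paper's (in fact slightly more careful, since you explicitly track the $\sigma$-dependence of the operator $A$), but your proof of the $x$-monotonicity follows a genuinely different route.

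The paper differentiates the iterate directly: it shows by induction that $\phi_{k+1}'(x)<0$ by computing $g_k(0)\geq\sigma_0:=d\int_{-\infty}^0 J(z)\phi_*(z)\,dz$ and $g_k'\leq 0$, then integrating by parts in the formula for $\phi_{k+1}'$ to obtain $\phi_{k+1}'(x)\leq e^{Mx}\bigl[M\sigma-\tfrac{1}{c}g_k(0)\bigr]$, which is negative only under the restriction $\sigma<\delta_0:=\min\{\sigma_0/(Mc_*),1\}$. You instead first secure the uniform lower bound $\phi_n\geq\sigma$ (from $f(\sigma)>0$), then compare finite differences via the backward Gronwall inequality $(e^{-Mx}w)'\leq 0$, using that $G_{\phi_n}$ is nonincreasing and that $w(0)=\phi_{n+1}(-h)-\sigma\geq 0$. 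This buys you $\delta_0=1$, i.e.\ no smallness restriction on $\sigma$, at the cost of concluding only that the iterates are nonincreasing rather than strictly decreasing; since the lemma only asserts $\phi_\sigma(x)\geq\phi_\sigma(y)$, that is enough. (The evenness of $J$ is not actually needed for $J*\widetilde\phi_n$ to be nonincreasing; $J\geq 0$ suffices.)
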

\begin{proof} 
Clearly $\phi_*^\sigma$ is monotone increasing in $\sigma$. It follows that
\[
A^n[\phi_*^{\sigma_1}](x)\leq A^n[\phi_*^{\sigma_2}](x) \mbox{ for } x\leq 0,\; 0<\sigma_1\leq \sigma_2<1,\; n=1,2,... 
\]
Letting $n\to\infty$ we obtain $ \phi_{\sigma_1}(x)\leq \phi_{\sigma_2}(x) \mbox{ if } 0<\sigma_1\leq \sigma_2<1,\; x\leq 0$. 

To show the monotonicity in $x$, it suffices to show the monotonicity in $x$ for every $\phi_n(x):=A^n[\phi_{n-1}](x)$, with $\phi_0(x)=\phi_*^{\sigma}(x)$.
We observe that from the definition of $A$ we know that each $\phi_n(x)$ is differentiable in $x$ for $x<0$. Moreover, we already proved that $\phi_n$ is increasing in $n$.
We next show by an induction argument that $\phi_n'(x)< 0$ for all $n\geq 0$, $x<0$ and $\sigma\in (0,\delta_0)$, with
\[
\delta_0:=\min\{\frac{\sigma_0}{Mc_*}, 1\}.
\]
Clearly this holds for $n=0$. Suppose that $\phi_k'(x)<0$ for $x<0$ and some nonnegative integer $k$. We show that $\phi_{k+1}'(x)<0$ for $x<0$ and $\sigma\in (0, \delta_0)$.

By definition, we have
\[
\phi_{k+1}(x)= \sigma e^{Mx}+\frac {e^{Mx}}{c}\int_x^0e^{-M\xi}g_k(\xi)d\xi,
\]
with 
\begin{align*}
g_k(\xi):=& d\int_{-\infty}^0 J(\xi-y)\phi_k(y)dy+d\sigma a(\xi)+\tilde f(\phi_k(\xi))\\
=& d\int_{-\infty}^{-\xi}J(z)\phi_k(z+\xi)dz+ d\sigma a(\xi)+\tilde f(\phi_k(\xi)).
\end{align*}
Clearly
\[
g_k(0)\geq d \int_{-\infty}^0 J(z)\phi_k(z)dz\geq \sigma_0:=d\int_{-\infty}^0J(z)\phi_*(z)dz.
\]
Moreover,
\begin{align*}
g_k'(\xi)=&-dJ(-\xi)\phi_k(0)+d\int_{-\infty}^{-\xi}J(z)\phi_k'(z+\xi)dz+d\sigma a'(\xi)+\tilde f'(\phi_k(\xi))\phi_k'(\xi)\\
\leq & -dJ(-\xi)\sigma+d\sigma J(\xi)=0.
\end{align*}
We thus obtain
\begin{align*}
\phi_{k+1}'(x)=&\sigma Me^{Mx}+M\frac{e^{Mx}}{c}\int_x^0 e^{-M\xi}g_k(\xi)d\xi-\frac 1c g_k(x)\\
=& \sigma Me^{Mx}+M\frac{e^{Mx}}{c}\left[ -\frac{e^{-M\xi}}{M}g_k(\xi)\Big|_x^0+\int_x^0 \frac{e^{-M\xi}}{M}g_k'(\xi)d\xi\right]-\frac 1c g_k(x)\\
\leq & \sigma Me^{Mx}+M\frac{e^{Mx}}{c}\left[-\frac{g_k(0)}{M}+\frac{e^{-Mx}}{M}g_k(x) \right]-\frac 1c g_k(x)\\
=& e^{Mx}[M\sigma-\frac 1 c g_k(0)]< e^{Mx}[M\sigma-\frac{\sigma_0}{c_*}]<0
\end{align*}
since $0<\sigma<\delta_0:=\min\{\frac{\sigma_0}{Mc_*}, 1\}$.
\end{proof}
\begin{lemma}\label{mono-c}
If $0<c_1<c_2<c_*$ and $\phi_\sigma^i$ denotes the solution obtained from the iteration process with $c=c_i$, $i=1,2$, then
$\phi_\sigma^1\geq \phi_\sigma^2$ for $\sigma\in (0, \delta_0)$.
\end{lemma}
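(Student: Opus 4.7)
The plan is to show that $\phi_\sigma^1$ itself serves as an upper barrier for the entire monotone iteration that defines $\phi_\sigma^2$, by exploiting the sign of $(\phi_\sigma^1)'$ supplied by Lemma \ref{lem:mono}. The strategy is: first promote $\phi_\sigma^1$ to a supersolution of the equation at speed $c_2$, then translate this into the integral inequality $A_{c_2}[\phi_\sigma^1]\le \phi_\sigma^1$, and finally iterate.

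To prepare a common framework, I would fix a single constant $M>0$ large enough so that $u\mapsto (c_iM-d)u+f(u)$ is nondecreasing on $[0,1]$ for \emph{both} $i=1,2$; since $c_1<c_2$, it suffices that the condition hold at $c=c_1$. With this common $M$, each of the operators $A_{c_1},A_{c_2}$ is monotone on $\Omega$ in the sense used earlier, and the derivation that produced the formula for $A_c$ goes through for either speed. Now, because $\phi_\sigma^1$ satisfies \eqref{semi-wave-sigma} at speed $c_1$ and, by Lemma \ref{lem:mono}, $(\phi_\sigma^1)'(x)\le 0$ for $x<0$ whenever $\sigma\in (0,\delta_0)$, substituting $\phi_\sigma^1$ into the equation at speed $c_2$ gives for $x<0$
\[
d\!\int_{-\infty}^{+\infty}\!\! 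J(x-y)\phi_\sigma^1(y)\,dy - d\phi_\sigma^1(x) + c_2 (\phi_\sigma^1)'(x) + f(\phi_\sigma^1(x))
= (c_2-c_1)(\phi_\sigma^1)'(x)\le 0,
\]
so that $\phi_\sigma^1$ is a supersolution of \eqref{semi-wave-sigma} with $c=c_2$.

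Next I would rerun the integrating-factor computation that converted the differential equation into the fixed-point equation for $A_c$: multiplying the supersolution inequality by $e^{-Mx}$, rewriting the left-hand side as $-c_2(e^{-Mx}\phi_\sigma^1)'$, and integrating from $x$ to $0$ (using $\phi_\sigma^1(0)=\sigma$) converts the differential inequality into the pointwise integral inequality $A_{c_2}[\phi_\sigma^1](x)\le \phi_\sigma^1(x)$ for $x<0$; for $x\ge 0$ both sides equal $\sigma$. Since the iteration defining $\phi_\sigma^1$ starts from $\phi_0=\phi_*^\sigma$ and is nondecreasing in $n$, we also have $\phi_*^\sigma\le \phi_\sigma^1$. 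Combining this with the monotonicity of $A_{c_2}$ and the supersolution bound, a straightforward induction yields
\[
A_{c_2}^n[\phi_*^\sigma]\le A_{c_2}^n[\phi_\sigma^1]\le \phi_\sigma^1 \qquad \text{for all } n\ge 0,
\]
and passing $n\to\infty$ on the left gives $\phi_\sigma^2\le \phi_\sigma^1$, as required.

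I expect no serious obstacle here: the only delicate point is the bookkeeping needed to use a \emph{single} $M$ for both speeds (so that both $A_{c_1}$ and $A_{c_2}$ are monotone and the same integrating-factor derivation applies), together with the observation that the role of Lemma \ref{lem:mono} is precisely to make the sign $(\phi_\sigma^1)'\le 0$ available, which is exactly what turns a comparison of speeds into a supersolution statement.
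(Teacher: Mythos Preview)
Your proposal is correct and follows essentially the same route as the paper: use $(\phi_\sigma^1)'\le 0$ from Lemma~\ref{lem:mono} to turn $\phi_\sigma^1$ into a supersolution at speed $c_2$, deduce $A_{c_2}[\phi_\sigma^1]\le\phi_\sigma^1$, and then sandwich the iteration $A_{c_2}^n[\phi_*^\sigma]$ between $\phi_*^\sigma$ and $\phi_\sigma^1$. Your remark about fixing a common $M$ is a harmless clarification; in fact once $\phi_\sigma^1$ is known to solve \eqref{semi-wave-sigma} (which is $M$-independent) and to be nonincreasing, the inequality $A_{c_2}[\phi_\sigma^1]\le\phi_\sigma^1$ follows for whatever $M$ is used in $A_{c_2}$, so no special coordination is really needed.
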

\begin{proof} To stress the dependence of the operator $A$ on $c$, we
denote it by $A_c$.
For $x<0$, from $(\phi_\sigma^1)'(x)\leq 0$ proved in the previous lemma, and
\[
-c_1(\phi_\sigma^1)'(x)=d\int_{-\infty}^0J(x-y)\phi_\sigma^1(y)dy+d\sigma a(x)+f(\phi_\sigma^1(x)),
\]
we obtain
\[
-c_2(\phi_\sigma^1)'(x)\geq d\int_{-\infty}^0J(x-y)\phi_\sigma^1(y)dy+d\sigma a(x)+f(\phi_\sigma^1(x)).
\]
It follows that
$\phi_\sigma^1(x)\geq A_{c_2}[\phi_\sigma^1](x)$. Using this and $\phi_\sigma^1\geq \tilde \phi_*$ we deduce
\[
\phi_\sigma^1(x)\geq A_{c_2}^n[\phi_\sigma^1](x)\geq A_{c_2}^n[\tilde\phi_*](x),\;\; n=1,2,....
\]
Letting $n\to\infty$ we obtain $\phi_\sigma^1\geq \phi_\sigma^2$, as desired.
\end{proof}

\subsection{Existence of semi-waves and spatial monotonicity}

In this subsection, we make use of $\phi_\sigma$ obtained above to prove  the following result.
\begin{theorem}\label{thm-semiwave-existence} Suppose that {\bf (J2)} holds. Then
for every $c\in (0, c_*)$, there exists a  semi-wave $\phi^c$ to the problem \eqref{semi-wave}  with speed $c$, and $\phi^c(x)$ is nonincreasing for $x\in(-\infty, 0]$.
\end{theorem}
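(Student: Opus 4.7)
The plan is to obtain $\phi^c$ as the monotone limit of the perturbed solutions $\phi_\sigma$ constructed in the previous subsection, as $\sigma\to 0^+$. By Lemma \ref{lem:mono}, the family $\{\phi_\sigma\}_{\sigma\in(0,\delta_0)}$ is pointwise nondecreasing in $\sigma$ and each $\phi_\sigma$ is nonincreasing in $x$ on $(-\infty,0]$. Hence the pointwise limit
\[
\phi^c(x):=\lim_{\sigma\to 0^+}\phi_\sigma(x)
\]
exists in $[0,1]$ and automatically inherits the nonincreasing monotonicity in $x$.

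To verify that $\phi^c$ solves \eqref{semi-wave}, I would pass to the $\sigma\to 0^+$ limit in the fixed-point identity $\phi_\sigma=A[\phi_\sigma]$ already established, i.e.,
\[
\phi_\sigma(x) = e^{Mx}\sigma + \frac{e^{Mx}}{c}\int_x^0 e^{-M\xi}\Big[d\int_{-\infty}^0 J(\xi-y)\phi_\sigma(y)\,dy + d\sigma a(\xi) + \tilde f(\phi_\sigma(\xi))\Big]\,d\xi,
\]
using the dominated convergence theorem (all integrands are uniformly bounded). The convolution $\xi\mapsto\int_{-\infty}^0 J(\xi-y)\phi^c(y)\,dy$ is continuous in $\xi$ by continuity of $J$, independently of any continuity of $\phi^c$; consequently the integrand in the limit identity is almost everywhere continuous and bounded, so the integral is absolutely continuous in $x$. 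This forces $\phi^c$ itself to be continuous on $(-\infty,0]$, and a standard bootstrap then yields $\phi^c\in C^1((-\infty,0])$ and the semi-wave ODE in \eqref{semi-wave}. The condition $\phi^c(0)=0$ follows immediately from the identity.

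The main obstacle is the boundary condition $\phi^c(-\infty)=1$. By a mean-value argument one extracts $\xi_n\to-\infty$ with $(\phi^c)'(\xi_n)\to 0$; evaluating the ODE along $\xi_n$ forces $\ell:=\phi^c(-\infty)$ to be a zero of $f$, and by {\bf (f3)} this means $\ell\in\{0,1\}$. Ruling out $\ell=0$ (which, combined with monotonicity and $\phi^c(0)=0$, would yield $\phi^c\equiv 0$) is the key step, and I would do it by a translation/contradiction argument. Define $L_\sigma<0$ by $\phi_\sigma(L_\sigma)=1/2$, which exists for $\sigma<1/2$ since $\phi_\sigma$ is continuous and nonincreasing with $\phi_\sigma(-\infty)=1$ and $\phi_\sigma(0)=\sigma$. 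Suppose along some subsequence $L_{\sigma_n}\to-\infty$, and set $\psi_n(x):=\phi_{\sigma_n}(x+L_{\sigma_n})$. Then $\psi_n$ is nonincreasing with $\psi_n(0)=1/2$, satisfies
\[
d\int_{-\infty}^{+\infty} J(x-y)\psi_n(y)\,dy - d\psi_n(x) + c\psi_n'(x) + f(\psi_n(x)) = 0
\]
on $(-\infty,-L_{\sigma_n})$ and equals $\sigma_n$ on $[-L_{\sigma_n},+\infty)$. Since $-L_{\sigma_n}\to+\infty$, Helly's selection theorem together with the integral form of the equation extracts a subsequential limit $\psi\in C^1(\mathbb R)$ solving the same full-line equation on all of $\mathbb R$, with $\psi(-\infty)=1$, $\psi(0)=1/2$, and (using $\psi_n\le 1/2$ on $[0,+\infty)$ by monotonicity) $\psi(+\infty)=0$. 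This is a nonincreasing traveling wave for \eqref{Cauchy} of speed $c<c_*$, contradicting Proposition \ref{tw}. Hence $L_\sigma\ge-C_0$ for some $C_0>0$ and all small $\sigma$, and by monotonicity of $\phi_\sigma$ in $x$ we conclude $\phi^c(-C_0)\ge 1/2$, which rules out $\phi^c\equiv 0$ and forces $\ell=1$.
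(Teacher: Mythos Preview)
Your proposal is correct and follows essentially the same route as the paper: the key step in both is to show that the $1/2$-level point of $\phi_\sigma$ stays bounded as $\sigma\to 0$, since otherwise a translation/compactness argument would produce a nonincreasing traveling wave on $\mathbb R$ with speed $c<c_*$, contradicting Proposition~\ref{tw}. The paper organizes this slightly differently---translating by the $1/2$-level point first and passing to the limit afterwards, rather than taking the direct monotone limit $\phi^c=\lim_{\sigma\to 0}\phi_\sigma$ and only afterwards ruling out triviality---but the content of the argument is the same.
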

\begin{proof}

Let $\sigma_n$ be a decreasing sequence in $(0,\delta_0)\cap (0, 1/2)$ satisfying $\sigma_n\to 0$ as $n\to\infty$, where $\delta_0$ is given in Lemma 2.2. Then due to the monotonicity of $\phi_{\sigma_n}(x)$ in $x$, and the fact that
$\phi_{\sigma_n}(-\infty)=1,\; \phi_{\sigma_n}(0)=\sigma_n\in (0, 1/2)$, there exists a unique $\tilde x_n<0$ such that
\[
\phi_{\sigma_n}(\tilde x_n)=1/2,\; \phi_{\sigma_n}(x)<1/2 \mbox{ for } x>\tilde x_n.
\]
By Lemma \ref{lem:mono} we easily deduce $\tilde x_m \leq \tilde x_n$ for $m>n$.  Set
$$
\tilde \phi_n (x)  = \phi_{\sigma_n} (x + \tilde x_n)\ \ \textrm{for} \  x< - \tilde x_n.
$$
Then $\tilde \phi_n $ satisfies,  for $x< - \tilde x_n$
\begin{equation}\label{tilde-phi-n}
d \int_{-\infty}^{- \tilde x_n}  J(x-y)  \tilde \phi_n  (y)dy  + d \int_{- \tilde x_n}^{+\infty}   J(x-y)  \sigma_n  dy  - d \tilde \phi_n + c \tilde \phi_{n}' + f(\tilde \phi_n) =0.
\end{equation}
In view of $\tilde x_m \leq \tilde x_n<0$ for  $m> n$, there are two possible cases:
\begin{itemize}
\item{\it Case 1.} $- \tilde x_n \rightarrow + \infty$ as $n\rightarrow  +\infty$.
\item{\it Case 2.} $- \tilde x_n \rightarrow  x_0$ as $n\rightarrow  +\infty$ for some $x_0\in(0,+\infty)$.
\end{itemize}

Since  $\tilde \phi_n$ and by the equation subsequently $\tilde\phi_n'$ are uniformly bounded,  by the Arzela-Ascoli Theorem and a standard argument involving a diagonal process of choosing subsequences, there exist $\tilde \phi_{\infty}\in C(\mathbb R)$ and a subsequence of $\{ \tilde \phi_n \}_{n\geq 1}$, still denoted by $\{ \tilde \phi_n \}_{n\geq 1}$, such that  $\tilde \phi_n $ converges to $\tilde \phi_{\infty}$ locally uniformly in $\mathbb R$. (Here we extend $\tilde\phi_n(x)$ by $\sigma_n$ for $x\geq -\tilde x_n$.) Moreover, $\tilde\phi_\infty(x)$ is nonincreasing in $x$, and
$
 \tilde \phi_{\infty} (0) = {1\over 2}.
$

Moreover, if {\it Case 1}   happens,  we can verify that  $\tilde \phi_{\infty}$ satisfies
\begin{equation}\label{tilde-phi-infty}
d \int_{-\infty}^{\infty }  J(x-y)  \tilde \phi_{\infty}  (y)dy     - d \tilde \phi_{\infty} + c \tilde \phi_{\infty}' + f(\tilde \phi_{\infty}) =0.
\end{equation}
Indeed, by  (\ref{tilde-phi-n}),  we have
\begin{eqnarray}\label{tilde-phi-int}
c \left( \tilde \phi_{n}(x) - {1\over 2} \right) &=&  - d \int_0^x \int_{-\infty}^{- \tilde x_n}  J(z-y)  \tilde \phi_n  (y)dy dz -d \int_0^x \int_{- \tilde x_n}^{\infty}   J(z-y)  \sigma_n  dy dz\cr
&&+ d \int_0^x \tilde \phi_n (z) dz  -\int_0^x  f(\tilde \phi_n(z)) dz.
\end{eqnarray}
Fix $x\in\mathbb R$;  by the dominated convergence theorem, one easily sees that by letting $n\rightarrow \infty$, the above equation yields
\begin{eqnarray*}
c  \left( \tilde \phi_{\infty}(x) - {1\over 2} \right) &=&  - d \int_0^x \int_{-\infty}^{ \infty}  J(z-y)  \tilde \phi_{\infty}  (y)dy dz   \\
&&+ d \int_0^x \tilde \phi_{\infty }(z) dz  -\int_0^x  f(\tilde \phi_{\infty}(z)) dz,
\end{eqnarray*}
and thus (\ref{tilde-phi-infty}) follows by differetiating this equation. However, (\ref{tilde-phi-infty}) contradicts to the fact that $c_*$ is the minimal speed. Hence {\it Case 1} cannot happen.

Therefore {\it Case 2} must happen. Similarly  fix $x\in\mathbb R$ and let $n\rightarrow \infty$ in (\ref{tilde-phi-int}); we obtain
\begin{eqnarray*}
c  \left( \tilde \phi_{\infty}(x) - {1\over 2} \right) &=&  - d \int_0^x \int_{-\infty}^{x_0}  J(z-y)  \tilde \phi_{\infty}  (y)dy dz   \\
&&+ d \int_0^x \tilde \phi_{\infty }(z) dz  -\int_0^x  f(\tilde \phi_{\infty}(z)) dz,
\end{eqnarray*}
which yields
$$
\begin{cases}
\displaystyle  d \int_{-\infty}^{x_0 }  J(x-y)  \tilde \phi_{\infty}  (y)dy     - d \tilde \phi_{\infty} + c \tilde \phi_{\infty}'+ f(\tilde \phi_{\infty}) =0,  &   {x<  x_0}, \\
\displaystyle  \ \ \ \ \ \ \  \ \tilde \phi_{\infty}(x_0) =  0.
\end{cases}
$$
Set $\phi(x)= \tilde\phi_{\infty}(x+x_0)$; then  $\phi(x)$ satisfies
$$
\begin{cases}
\displaystyle  d \int_{-\infty}^{0 }  J(x-y)   \phi (y)dy     - d  \phi + c \phi' + f( \phi) =0,  &   x<  0, \\
\displaystyle  \ \ \ \ \  \   \phi (0) =  0.
\end{cases}
$$
Since $ \tilde \phi_{\infty}(x)\in [0,1]$ and is monotone in $x$ with $\tilde\phi_\infty(0)=1/2$, it follows that  $\phi(x)$ is nonincreasing in $x<0$ and $\lim_{x\to-\infty}\phi(x)\in [1/2, 1]$.
The above equation and the property of $f$ then imply that $\phi(-\infty)=1$.
\end{proof}

\subsection{Uniqueness of semi-wave and its monotonicity}
Fix  $c\in (0, c_*)$, and suppose that $\phi_i$, $i=1,2$, are nonnegative solutions of  (\ref{semi-wave})  with speed $c$. Then
$$
\begin{cases}
\displaystyle  d \int_{-\infty}^{0 }  J(x-y)   \phi_i (y)dy     - d  \phi_i + c \phi'_{i} + f( \phi_i) =0,  &   x<  0, \\
\displaystyle  \phi_i(-\infty) =1, \   \phi_i (0) =  0,
\end{cases}
$$
or equivalently
\begin{equation*}
\begin{cases}
\displaystyle  d ( J* \phi_i) (x)     - d  \phi_i + c \phi'_{i} + f( \phi_i) =0  &   x<  0, \\
\displaystyle  \phi_i(-\infty) =1, \   \phi_i (x) =  0,\  x\geq 0.
\end{cases}
\end{equation*}

To prove the uniqueness, it suffices to  show that $\phi_1\equiv \phi_2$.
We first prove a strong maximum principle for later use. We remark that in the following lemma and the uniqueness proof,  only condition {\bf (J)} for the kernel function $J$ is needed.

\begin{lemma}\label{lm-strongMP}
Assume that $w\in C(\mathbb R)\cap C^1(\mathbb R\setminus\{0\})$ satisfies
$$
\begin{cases}
\displaystyle  d ( J* w) (x)     - d  w + a(x) w' + b(x)w \leq 0,  &   x<  0, \\
\displaystyle    w(x) \geq  0,\  x\geq 0,
\end{cases}
$$
where $d$ is a positive constant, $J$ satisfies {\bf (J)}, and $a, b\in L^\infty_{loc}(\mathbb R)$. If $w(x)\geq 0$ and $w(x)\not\equiv 0$, then $w(x)>0$ for $x<0$.
\end{lemma}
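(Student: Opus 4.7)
The plan is to argue by contradiction using a nonlocal analogue of Hopf's strong maximum principle, with the uniform positivity of $J$ near $0$ from {\bf (J)} playing the role of ellipticity. Suppose, for contradiction, that $w(x_0)=0$ for some $x_0<0$. Because $w\ge 0$ on $\mathbb R$ and $w$ is $C^1$ in a neighborhood of $x_0$, the point $x_0$ is an interior local minimum, so $w'(x_0)=0$. Substituting into the differential inequality at $x=x_0$ collapses the terms involving $w(x_0)$ and $w'(x_0)$, leaving
\[
d\int_{\mathbb R}J(x_0-y)\,w(y)\,dy \le 0.
\]
Since $J\ge 0$ and $w\ge 0$ on $\mathbb R$, this forces $J(x_0-y)\,w(y)=0$ for a.e.\ $y\in\mathbb R$.

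The next step is to convert this integral identity into pointwise vanishing of $w$ on a neighborhood of $x_0$. By {\bf (J)}, $J$ is continuous with $J(0)>0$, so there exists $r>0$ (depending only on $J$) such that $J>0$ on $(-r,r)$. Hence $J(x_0-y)>0$ for every $y\in(x_0-r,\,x_0+r)$, which combined with the identity above forces $w=0$ a.e.\ on $(x_0-r,\,x_0+r)$; continuity of $w$ then upgrades this to $w\equiv 0$ on the entire interval. In other words, a single interior zero of $w$ in $(-\infty,0)$ automatically propagates to a whole neighborhood, and the neighborhood radius $r$ is uniform in $x_0$.

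To spread this local vanishing over all of $(-\infty,0)$, I would set $Z:=\{x\in(-\infty,0):w(x)=0\}$ and argue that $Z$ is both open and closed in the connected interval $(-\infty,0)$. Openness is exactly what the propagation argument just established (it applies at any point of $Z$, since the differential inequality is assumed throughout $x<0$), while closedness is immediate from continuity of $w$. Since $x_0\in Z$, connectedness of $(-\infty,0)$ forces $Z=(-\infty,0)$, i.e.\ $w\equiv 0$ on $(-\infty,0)$. Combining this with continuity and one final use of the convolution identity at $x<0$—using $J>0$ near $0$ to force $w$ to vanish in a right-neighborhood of $0$ as well—contradicts the hypothesis $w\not\equiv 0$.

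The main obstacle is really the very first propagation step: extracting pointwise vanishing of $w$ on an entire neighborhood of $x_0$ from the single integrated equation $\int J(x_0-y)w(y)\,dy=0$, given only continuity of $w$ and the qualitative assumptions on $J$. This is where $J(0)>0$ in {\bf (J)} is essential; without a point of strict positivity of $J$, an interior zero of $w$ would carry no local information. Once this neighborhood vanishing is secured, the remaining extension is the routine open/closed argument sketched above, and the drift term $a(x)w'$ and zeroth-order term $b(x)w$ never actually enter the contradiction since both $w$ and $w'$ vanish at the minimum point where the argument begins.
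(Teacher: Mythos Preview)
Your argument is correct and follows exactly the approach in the paper: evaluate the inequality at an interior zero $x_0<0$, use $w(x_0)=w'(x_0)=0$ to reduce to $\int J(x_0-y)w(y)\,dy\le 0$, invoke $J(0)>0$ from {\bf (J)} to force $w$ to vanish on a uniform neighborhood of $x_0$, and then run the open--closed connectedness argument on $\{x<0:w(x)=0\}$. The paper's proof is simply a terser version of what you wrote; it does not spell out the role of $J(0)>0$ or the uniform radius $r$, but the mechanism is identical.

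One small caution about your final sentence: once you have $w\equiv 0$ on $(-\infty,0)$, your ``one final use of the convolution identity'' at points $x<0$ near $0$ only shows $w\equiv 0$ on $(-\infty,r)$ for the fixed radius $r$ coming from $J$; since the differential inequality is only assumed for $x<0$, you cannot keep propagating rightward, so this does not literally contradict $w\not\equiv 0$ on all of $\mathbb R$. The paper simply stops at $w\equiv 0$ on $(-\infty,0)$, and in every application of the lemma in the paper the function $w$ is already known to be nontrivial on $(-\infty,0)$, so the argument suffices as stated.
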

\begin{proof}
Suppose that there exists $x_0<0$ such that $w(x_0) =0$. Then $w'(x_0)=0$ and it follows from the differential-integral inequality satisfied by $w$ that  at $x=x_0$,
$$
\displaystyle  d ( J* w) (x_0)     \leq 0,
$$
which indicates that $w(y )=0$ when $y$ is close to $ x_0$. This implies that $w(x) \equiv 0$ when $x<0$, since $\{ x<0 \ |\ w(x)=0\} $ is now both open and closed.
\end{proof}

We are now ready to show $\phi_1\equiv \phi_2$.
Similar to \cite{WZ2019}, for small $\epsilon>0$, define
$$
K_{\epsilon}  = \left\{  k\geq 1 : \ k\phi_1(x) \geq \phi_2(x)-\epsilon \mbox{ for } x\leq 0  \right\}.
$$
$K_{\epsilon} \neq \emptyset $ since $ \phi_i(-\infty) =1$,   $\phi_i (0) =  0$,   $i=1,2$.
Set
$$
k_{\epsilon} = \inf \ K_{\epsilon}\geq 1.
$$
It is clear that $k_{\epsilon}$ is decreasing in $\epsilon$ and thus we may define
$$
k^* = \lim_{\epsilon \rightarrow 0^+} k_{\epsilon}\in[1,+\infty].
$$
From the equation satisfied by $\phi_i$, $i=1,2$, we deduce
\begin{eqnarray*}\label{pf-phi'}
\phi'_{i}(0^-) &=& \lim_{x\rightarrow 0^-} \frac{\phi_i(x)  - \phi_i (0)}{ x} \cr
  &=&  \lim_{x\rightarrow 0^-} {1\over c x}\left[ -d \int_0^x \int_{-\infty}^{0 }  J(z-y)   \phi_i (y)dy  dz + d \int_0^x \phi_i(z)dz - \int_0^x f(\phi_i (z) ) dz \right] \cr
&=& -{d\over c}  \int_{-\infty}^{0 }  J(0-y)   \phi_i (y)dy=\lim_{x\to 0^-}\phi_i'(x)<0.
\end{eqnarray*}
This implies that $k^* < +\infty$. We also have
$$
k^* \phi_1(x ) \geq \phi_2(x),\  \  x\leq 0.
$$
We claim that $k^* =1$. Otherwise, suppose that $k^* >1$  and thus  for $\epsilon>0$ small,  $k_{\epsilon}>1$.
Since $k_{\epsilon} \phi_1(0)  - \phi_2(0) + \epsilon  = \epsilon >0$ and
$$
\lim_{x\rightarrow -\infty}k_{\epsilon} \phi_1(x)  - \phi_2(x) + \epsilon = k_{\epsilon} -1 +\epsilon >0 ,
$$
by the definition of  $k_{\epsilon}$, there exists $x_{\epsilon} \in (-\infty, 0)$ such that
\begin{equation}\label{pf-k-epsilon}
 k_{\epsilon} \phi_1(x_{\epsilon})  - \phi_2(x_{\epsilon}) + \epsilon  =0.
\end{equation}

Now there are three possible cases:
\begin{itemize}
\item{Case (i):} $x_{\epsilon_n} \rightarrow -\infty$ along some sequence $\epsilon_n\rightarrow 0^+$.
\item{Case (ii):}  $x_{\epsilon_n} \rightarrow 0$ along some sequence $\epsilon_n\rightarrow 0^+$.
\item{Case (iii):} $x_{\epsilon_n} \rightarrow x^* \in (-\infty, 0) $ along some sequence $\epsilon_n\rightarrow 0^+$.
\end{itemize}

In {Case (i)}, from (\ref{pf-k-epsilon}) we obtain
$$
0 = \lim_{\epsilon_n \rightarrow 0^+ }   \left( k_{\epsilon_n} \phi_1(x_{\epsilon_n})  - \phi_2(x_{\epsilon_n}) + \epsilon_n  \right) =   k^* -1  >0,
$$
which is impossible.  Hence  Case (i)  leads to a contradiction.

Next,  we consider Cases (ii) and (iii). Define
$$
w_{\epsilon}(x)  =k_{\epsilon} \phi_1(x)  - \phi_2(x) + \epsilon,\; w^*(x) = k^* \phi_1(x) -\phi_2(x).
$$
Then
\[\mbox{
$w_{\epsilon}  (x_{\epsilon}) =0$, $w_{\epsilon} (x) \geq 0$,  $w^*(x) \geq 0$ for $x\in (-\infty, 0]$.}
\]
 Moreover,  $w^*$ satisfies
$$
d ( J*  w^*) (x)     - d  w^*    + c  w^*_{x} +  k^*  f( \phi_1) - f(\phi_2) =0.
$$

In Case (ii),  it follows from \eqref{pf-k-epsilon} that $w^*(0)=0$. Moreover, at $x =  x_{\epsilon_n}$, $w_{\epsilon_n}'( x_{\epsilon_n})=0$, i.e.,  $ k_{\epsilon_n}\phi_1'( x_{\epsilon_n}) = \phi_2'(x_{\epsilon_n})$. Then by leting  $\epsilon_n\rightarrow 0^+$, one has $k^*\phi'_1(0^-) = \phi'_2(0^-)$, and so $w^*_x(0^-)=0$.

On the other hand, from the equation satisfied by $w^*$ and the assumption \textbf{(f3)}, one sees that for $x<0$,
\begin{eqnarray*}
0 &=& d ( J*  w^*) (x)     - d  w^*    + c  w^*_{x} +  k^*  f( \phi_1) - f(\phi_2)\\
 &\geq & d ( J*  w^*) (x)     - d  w^*    + c  w^*_{x} +   f(  k^*\phi_1) - f(\phi_2)\\
 &=&  d ( J*  w^*) (x)     - d  w^*    + c  w^*_{x} +  b(x)w^*,
 \end{eqnarray*}
where the assumption  $k^*>1$ is used, and
\[
b(x):=\left\{\begin{array}{ll}  \frac{f(  k^*\phi_1) - f(\phi_2)}{k^*\phi_1 - \phi_2},& \mbox{ if } \ k^*\phi_1 - \phi_2\not=0,\\
0, &\mbox{ otherwise.}
\end{array}\right.
 \]
 We thus obtain, by letting $x\to 0^-$,
\begin{equation}\label{pf-case2}
d ( J*  w^*) (0) \leq 0.
\end{equation}
Since $k^*>1$, obviously $w^* \not\equiv 0$. Then by Lemma \ref{lm-strongMP}, $w^*>0$ for $x<0$. This is a contradiction to (\ref{pf-case2}). Therefore, Case (ii) also leads to a contradiction.

In Case (iii), similar to the arguments in Case (ii), a contradiction can be derived  at $x= x^*$. Since every possible case leads to a contradiction, we conclude that
$k^*=1$ must happen, which means $\phi_1\geq \phi_2$. Similarly,  we can show $\phi_2\geq \phi_1$. Therefore $\phi_1\equiv \phi_2$ and the uniqueness of the semi-wave (if exists) is verified.

We will from now on assume additoinally {\bf (J2)} is satisfied and denote the unique solution of \eqref{semi-wave} by $\phi^c(x)$.
We are ready to consider the monotonicity of $\phi^c(x)$ in $x$ and in $c$, respectively.

If $\delta>0$, then by Theorem \ref{thm-semiwave-existence}, $w(x)=\phi^c(x-\delta)-\phi^c(x)\geq 0$. Applying Lemma \ref{lm-strongMP} to $\phi^c$ we see that $\phi^c(x)>0$ for $x<0$.
It follows that  $w(x)\not\equiv 0$. We may now apply Lemma  \ref{lm-strongMP} to $w$ to conclude that $w(x)>0$ for $x<0$.
This proves the strict monotonicity of $\phi^c(x)$ in $x$ for $x\in (-\infty, 0]$.

We show next that  $\phi^{c_1} (x)> \phi^{c_2}(x)$ for $x<0$ if $0<c_1< c_2< c_*$.  By Lemma \ref{mono-c} and the proof of Theorem \ref{thm-semiwave-existence}, we see that for such $c_1$ and $c_2$, $w(x):=\phi^{c_1}(x)-\phi^{c_2}(x)$ is nonnegative. Moreover, $(\phi^{c_1})_x\leq 0$ for  $x<0$. Thus $\phi^{c_1}$ satisfies
\begin{equation*}
\begin{cases}
\displaystyle  d ( J* \phi^{c_1}) (x)     - d  \phi^{c_1} + c_2 (\phi^{c_1})_{ x} + f( \phi^{c_1}) \leq 0, &   x<  0, \\
\displaystyle  \phi^{c_1}(-\infty) =1, \   \phi^{c_1} (x) =  0,\  x\geq 0.
\end{cases}
\end{equation*}
We may now apply Lemma  \ref{lm-strongMP} to $w(x)=\phi^{c_1}(x)-\phi^{c_2}(x)$ to conclude that either
$w(x)>0$ for $x<0$ or $w(x)\equiv 0$. If the latter happens, then the above inequality for $\phi^{c_1}$ becomes an equality, which implies that $\phi_x^{c_1}\equiv 0$. But this is a contradiction to $\phi^{c_1}(-\infty)=1>\phi^{c_1}(0)=0$. Therefore $w(x)>0$ for $x<0$ and the strict monotonicity of $\phi^c(x)$ in $c$ is proved.

\medskip

Summarizing, we have proved the following result.

\begin{theorem}\label{thm-semiwave-uniqueness} Suppose that {\bf (J)}, {\bf (J2)} and {\bf (f3)} hold. Then
for any $c\in (0, c_*)$,  the problem \eqref{semi-wave} has a unique solution  $\phi=\phi^c$, and $\phi^c(x)$ is strictly decreasing in $c\in (0, c_*)$ for fixed $x<0$, and is strictly decreasing in $x\in (-\infty, 0]$ for fixed $c\in (0, c_*)$.
\end{theorem}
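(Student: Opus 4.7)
The plan is to assemble the statement from the three pieces already developed in this section. Existence for every $c\in(0,c_*)$ is furnished by Theorem \ref{thm-semiwave-existence}. For uniqueness, I would apply a sliding-comparison argument to two candidate solutions $\phi_1,\phi_2$ with the same speed $c$: for small $\epsilon>0$, set
$$
K_\epsilon=\{k\geq 1:\ k\phi_1(x)\geq \phi_2(x)-\epsilon\ \text{for}\ x\leq 0\},\quad k_\epsilon=\inf K_\epsilon,\quad k^*=\lim_{\epsilon\to 0^+}k_\epsilon.
$$
The boundary derivative computation $\phi_i'(0^-)=-(d/c)\int_{-\infty}^0 J(-y)\phi_i(y)\,dy<0$ forces $k^*<\infty$. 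Assuming for contradiction that $k^*>1$, one obtains touching points $x_\epsilon\in(-\infty,0)$ with $k_\epsilon\phi_1(x_\epsilon)-\phi_2(x_\epsilon)+\epsilon=0$ and analyses the three limiting scenarios $x_{\epsilon_n}\to-\infty$, $x_{\epsilon_n}\to 0$, and $x_{\epsilon_n}\to x^*\in(-\infty,0)$, each of which must be ruled out.

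The tool powering the interior touching cases is the strong maximum principle of Lemma \ref{lm-strongMP}, combined with the nonincreasing property of $f(u)/u$ from \textbf{(f3)}. Precisely, on the limit function $w^*=k^*\phi_1-\phi_2$ one uses $k^*f(\phi_1)\geq f(k^*\phi_1)$ (valid because $k^*>1$ and $f(u)/u$ is nonincreasing) to convert the nonlinear equation for $w^*$ into a linear differential-integral inequality of the form $d(J*w^*)-dw^*+cw^*_x+b(x)w^*\leq 0$. Lemma \ref{lm-strongMP} then yields $w^*>0$ on $(-\infty,0)$, contradicting the degeneracy of $w^*$ at the relevant touching point (which in Case (i) contradicts the limit at $-\infty$, and in Cases (ii) and (iii) contradicts the interior/boundary touching).

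Strict $x$-monotonicity of the unique $\phi^c$ follows by applying Lemma \ref{lm-strongMP} to $w(x)=\phi^c(x-\delta)-\phi^c(x)$ for any $\delta>0$: by Theorem \ref{thm-semiwave-existence}, $w\geq 0$, while $\phi^c(-\infty)=1>0=\phi^c(0)$ ensures $w\not\equiv 0$, whence $w>0$. For strict $c$-monotonicity, Lemma \ref{mono-c} and the approximation in the proof of Theorem \ref{thm-semiwave-existence} give $w(x):=\phi^{c_1}(x)-\phi^{c_2}(x)\geq 0$ for $0<c_1<c_2<c_*$; using $(\phi^{c_1})'\leq 0$ from the previous step, one checks that $\phi^{c_1}$ satisfies the semi-wave equation with $c_1$ replaced by $c_2$ as an inequality, and then $w$ satisfies the hypotheses of Lemma \ref{lm-strongMP} with coefficient $b(x)$ coming from the locally Lipschitz difference $f(\phi^{c_1})-f(\phi^{c_2})$. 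The alternative $w\equiv 0$ would force $(\phi^{c_1})'\equiv 0$, contradicting the prescribed limits at $-\infty$ and $0$, so $w>0$.

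The main obstacle is the interior-touching step in the uniqueness argument, which requires converting the nonlinear semi-wave equation into a linear differential-integral inequality on $w^*$ and invoking the strong maximum principle at a point where $w^*$ vanishes. This step relies critically on the sub-homogeneity encoded in \textbf{(f3)}, on the one-sided differentiability of $\phi_1,\phi_2$ at $0^-$ to handle Case (ii), and on the careful construction of $b(x)\in L^\infty_{\mathrm{loc}}$ so that Lemma \ref{lm-strongMP} can be applied without any sign assumption on $b$.
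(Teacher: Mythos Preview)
Your proposal is correct and follows essentially the same route as the paper: the sliding quantity $k_\epsilon$ with the three-case dichotomy on $x_{\epsilon_n}$, the sub-homogeneity $k^*f(\phi_1)\geq f(k^*\phi_1)$ from \textbf{(f3)} feeding into Lemma~\ref{lm-strongMP}, and the subsequent applications of that lemma to $\phi^c(x-\delta)-\phi^c(x)$ and $\phi^{c_1}-\phi^{c_2}$ all match the paper's argument. The only minor clarification is that Case~(i) is dispatched directly by passing to the limit in the touching identity (yielding $k^*-1=0$) rather than via the strong maximum principle.
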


We conclude this section with the following theorem, which uniquely determines the spreading speed $c_0$.
\begin{theorem}\label{thm-speed} Suppose that {\bf (J)}, {\bf (J2)} and {\bf (f3)} hold. Then
the unique semi-wave $\phi^c(x)$ satisfies
\begin{equation}\label{to-c*}
\lim_{c\to c_*^-}\phi^c(x)=0 \mbox{ locally uniformly in } x\in (-\infty, 0].
\end{equation}
Moreover, for any $\mu>0$, there exists a unique $c=c_0=c_0(\mu)\in (0, c_*)$ such that
\begin{equation}\label{c-mu}
 c_0 = \mu \int_{-\infty}^0 \int_0^{\infty} J(x-y) \phi^{c_0}(x) dy dx .
\end{equation}
\end{theorem}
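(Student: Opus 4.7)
The plan is to prove the two assertions in order. Once the vanishing \eqref{to-c*} is established, the uniqueness/existence of $c_0$ will follow from a monotonicity and intermediate-value argument applied to
\[
G(c):=c-\mu\int_{-\infty}^0\int_0^{+\infty}J(x-y)\phi^c(x)\,dy\,dx,\qquad c\in(0,c_*).
\]

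\emph{Step 1: vanishing as $c\to c_*^-$.} By the strict monotonicity of $\phi^c$ in $c$ given by Theorem \ref{thm-semiwave-uniqueness}, the pointwise limit $\phi^*(x):=\lim_{c\to c_*^-}\phi^c(x)$ exists, is nonincreasing, and satisfies $0\le\phi^*\le 1$ together with $\phi^*(0)=0$. Uniform bounds on $(\phi^c)'$ read off from \eqref{semi-wave} allow Arzel\`a--Ascoli and dominated convergence to be applied, so if $\phi^*\not\equiv 0$ then $\phi^*$ is a nontrivial $C^1$ solution of \eqref{semi-wave} at $c=c_*$; letting $x\to -\infty$ in the equation forces $f(\phi^*(-\infty))=0$, hence $\phi^*(-\infty)=1$. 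To rule this case out I extend $\phi^*$ by zero on $(0,\infty)$: because this extension $\tilde\phi^*$ vanishes on $(0,\infty)$, it solves the full traveling wave equation \eqref{tw-cauchy} at speed $c_*$ on $(-\infty,0)$ and gives a strictly positive left-hand side on $(0,\infty)$, so $\tilde\phi^*$ is a non-trivial subsolution of \eqref{tw-cauchy} at speed $c_*$ with the correct end conditions. A sliding comparison of $\tilde\phi^*$ with translates of the strictly positive minimum-speed traveling wave $\phi_*$ from Proposition \ref{tw} then yields a contradiction: setting $s^*:=\inf\{s\in\mathbb R:\phi_*(\cdot-s)\ge\tilde\phi^*(\cdot)\text{ on }\mathbb R\}$, one argues $s^*$ is finite with a finite touching point $x_0$; if $x_0\ge 0$ then $\phi_*(x_0-s^*)=\tilde\phi^*(x_0)=0$ contradicts $\phi_*>0$, while if $x_0<0$ the strong maximum principle Lemma \ref{lm-strongMP} applied to $w:=\phi_*(\cdot-s^*)-\tilde\phi^*$ forces $w\equiv 0$ on $(-\infty,0)$, contradicting $w(0^-)=\phi_*(-s^*)>0$.

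\emph{Step 2: existence and uniqueness of $c_0$.} Rewrite $G(c)=c-\mu\int_{-\infty}^0 a(x)\phi^c(x)\,dx$ with $a(x):=\int_0^\infty J(x-y)\,dy$; condition \textbf{(J2)} implies \textbf{(J1)} (as noted in the Introduction), so $a\in L^1((-\infty,0])$. Strict monotonicity of $\phi^c$ in $c$ gives that $c\mapsto\int a\phi^c\,dx$ is strictly decreasing, hence $G$ is strictly increasing on $(0,c_*)$. Continuity of $G$ follows from local uniform convergence $\phi^{c_n}\to\phi^c$ (Arzel\`a--Ascoli plus the uniqueness in Theorem \ref{thm-semiwave-uniqueness}) together with dominated convergence using the envelope $a$. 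Step 1 and dominated convergence give $\int a\phi^c\,dx\to 0$ as $c\to c_*^-$, so $G(c_*^-)=c_*>0$; as $c\to 0^+$, $\phi^c$ increases to some $\phi^{0^+}\le 1$ with $\phi^{0^+}>0$ on $(-\infty,0)$ (applying Lemma \ref{lm-strongMP} to each $\phi^c$), hence $G(0^+)=-\mu\int a\phi^{0^+}\,dx<0$ for every $\mu>0$. The intermediate value theorem then produces a unique $c_0=c_0(\mu)\in(0,c_*)$ with $G(c_0)=0$, completing the proof.

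The main obstacle is the sliding argument in Step 1, specifically checking that the touching point $x_0$ is attained in a compact region rather than ``at infinity''. Because both $\phi_*$ and $\tilde\phi^*$ tend to $1$ at $-\infty$, one must verify that the exponential decay rates of $1-\phi_*$ and $1-\phi^*$ at $-\infty$ coincide, which follows because both are governed by the same linearization of the nonlocal equation around the state $1$ at speed $c_*$. This ensures that some translate $\phi_*(\cdot-s)$ strictly dominates $\tilde\phi^*$ near $-\infty$, so that $s^*$ is finite and is realized at a finite $x_0$. The remaining ingredients---continuity and monotonicity of $G$, the endpoint behaviour, and the IVT---are routine given the machinery of Sections 2.1--2.3.
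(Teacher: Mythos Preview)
Your Step 2 is correct and essentially identical to the paper's argument: both show that $c\mapsto c-\mu\int_{-\infty}^0 a(x)\phi^c(x)\,dx$ is continuous and strictly increasing, with the correct signs at the endpoints, and invoke the intermediate value theorem.

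Step 1 follows the same overall plan as the paper (pass to a limit profile at speed $c_*$, then slide the minimal-speed wave $\phi_*$ against it and apply the strong maximum principle), but your handling of the touching-at-infinity issue is where the proposal breaks down. You compare $\phi_*(\cdot-s)$ directly with $\phi^*$, both of which have limit $1$ at $-\infty$, so the difference $w=\phi_*(\cdot-s^*)-\phi^*$ satisfies $w(-\infty)=0$. This means the infimum in your definition of $s^*$ may fail to be attained at any finite $x_0$, and your proposed fix via ``coinciding exponential decay rates'' is not a proof: establishing precise asymptotics of $1-\phi^*$ at $-\infty$ for a nonlocal equation is itself a nontrivial lemma, and even granting identical leading rates one must still control the prefactors and subdominant terms uniformly to conclude strict domination for some translate and interior touching at $s^*$.

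The paper avoids this obstacle entirely by a simple scaling trick: instead of sliding against $\phi^*$, it slides against $\tilde\phi:=k\phi^*$ for any fixed $k\in(0,1)$. By {\bf (f3)}, $\tilde\phi$ is still a subsolution of the full equation \eqref{tw-cauchy} at speed $c_*$, and now $\tilde\phi\le k<1$ globally. Hence $w^{\eta}(x):=\phi_*(x-\eta)-\tilde\phi(x)$ satisfies $w^{\eta}(-\infty)\ge 1-k>0$ and $w^{\eta}(0)=\phi_*(-\eta)>0$, so for large $\eta$ one has $w^\eta\ge 0$ on $(-\infty,0]$, the optimal $\eta_*$ is well defined, and any touching must occur at a finite interior point; Lemma \ref{lm-strongMP} then gives the contradiction. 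Since $k\in(0,1)$ is arbitrary, $\phi^*\equiv 0$ follows. This one-line device is precisely the missing idea in your Step 1.
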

\begin{proof}
Let $c_n$ be an arbitrary increasing sequence in $(0, c_*)$ converging to $c_*$ as $n\to\infty$. Denote $\phi_n(x):=\phi^{c_n}(x)$.
Then $\phi_n(x)$ is uniformly bounded, and from the equation satisfied by $\phi_n$ we see that $\phi_n'(x)$ is also uniformly bounded. Therefore we can find a subsequence of $\phi_n$, still denoted by itself, such that $\phi_n(x)\to \phi(x)$ in $C_{loc}((-\infty, 0])$ as $n\to\infty$. As in the proof of Theorem \ref{thm-semiwave-existence}, we can verify that $\phi$ satisfies
$$
\begin{cases}
\displaystyle  d \int_{-\infty}^{0 }  J(x-y)   \phi (y)dy     - d  \phi + c_* \phi_{ x} + f( \phi) =0,  &   x<  0, \\
\displaystyle  \ \ \ \ \  \   \phi (0) =  0.
\end{cases}
$$
Clearly we also have $0\leq \phi(x)<\phi_n(x)$ for $x< 0$. We extend $\phi(x)$ by 0 for $x>0$.

Fix $k\in(0,1)$ and define $\tilde\phi(x):=k\phi(x)$. Then by {\bf (f3)} we obtain $f(k\phi)\leq k f(\phi)$ and hence
$$
\begin{cases}
\displaystyle  d (J* \tilde\phi)(x)     - d  \tilde\phi(x) + c_* \tilde \phi'(x)+ f( \tilde\phi(x)) \geq 0,  &   x<  0, \\
\displaystyle  \ \ \ \ \  \   \tilde \phi (x) =  0, & x\geq 0.
\end{cases}
$$
For any $\eta>0$, note that
\[
\phi_*(x-\eta)\geq \phi_*(-\eta) \mbox{ for } x\leq 0.
\]
Since $\phi_*(-\infty)=1$ and $\tilde\phi(x)\leq k<1$, we find that for all large $\eta>0$,
\[
w^\eta(x):=\phi_*(x-\eta)-\tilde\phi(x)\geq 0 \mbox{ for } x\leq 0.
\]
Hence we can define
\[
\eta_*:=\inf\big\{\xi \in\mathbb R:  w^\eta(x)\geq 0 \mbox{ for } x\leq 0 \mbox{ and all } \eta\geq \xi\big\}.
\]
If $\eta_*=-\infty$, then $\tilde\phi(x)\leq \phi_*(x-\eta)$ for all $\eta\in\mathbb R$. Letting $\eta\to -\infty$ and recalling $\phi_*(+\infty)=0$ we immediately obtain $\tilde\phi(x)\leq 0$, which implies $\phi(x)\equiv 0$.

If $\eta_*>-\infty$, then
\[
w^{\eta_*}(x)\geq 0 \mbox{ for } x\leq 0,
\]
and since $w^{\eta_*}(-\infty)\geq 1-k>0$ and $w^{\eta_*}(0)=\phi_*(-\eta_*)>0$, the definition of $\eta_*$ indicates that there exists $x_*\in (-\infty, 0)$ such that
\[
w^{\eta_*}(x_*)=0.
\]
From
\[
 d \int_{-\infty}^{+\infty }  J(x-y)   \phi_* (y-\eta_*)dy     - d  \phi_*(x-\eta_*) + c_*  (\phi_*)'(x-\eta_*) + f( \phi_*(x-\eta_*)) = 0 \mbox{ for } x\in \mathbb R,
 \]
 we obtain
\[
 \begin{cases}
 d (J*\phi_* )(x-\eta_*)     - d  \phi_*(x-\eta_*) + c_*  (\phi_*)' (x-\eta_*)+ f( \phi_*(x-\eta_*)) = 0, & x< 0,\\
 \phi_*(x-\eta_*)>0, & x\geq 0.
 \end{cases}
 \]
 Therefore we can apply Lemma \ref{lm-strongMP} to $w^{\eta_*}$ to conclude that $w^{\eta_*}(x)>0$ for $x<0$, which is a contradiction to $w^{\eta_*}(x_*)=0$. Therefore $\eta_*>-\infty$ cannot occur and we always have $\phi(x)\equiv 0$.
 Since $c_n$ is an arbitrary increasing sequence converging to $c_*$, this implies that \eqref{to-c*} holds.

 It remains to prove \eqref{c-mu}. For $c\in (0, c_*)$ define
 \[
 M(c):=\mu \int_{-\infty}^0 \int_0^{\infty} J(x-y) \phi^{c}(x) dy dx.
 \]
 The monotonicity of $\phi^c(x)$ in $c$ indicates that $M(c)$ is strictly decreasing in $c$.
 Due to the uniqueness of $\phi^c$, one may use a similar argument to that used to show the convergence of $\phi_n(x)$ above to deduce that $\phi^c(x)$ is continuous in $c$ uniformly for $x$ in any bounded set of $(-\infty, 0]$. It follows that $M(c)$ is continuous in $c$. Now we consider the function $c\mapsto c-M(c)$ for $c\in (0, c_*)$.
 Clearly it is continuous and is strictly increasing. By \eqref{to-c*} and the dominated convergence theorem, we see that
 as $c\to c_*^-$, $c-M(c)\to c_*>0$. For all small $c>0$, $c-M(c)\leq c-M(c_*/2)<0$. Therefore there exists a unique $c=c_0\in (0, c_*)$ such that $
 c_0-M(c_0)=0$, i.e., \eqref{c-mu} holds.
 \end{proof}

 \subsection{Semi-wave and condition {\bf (J1)}} In the previous subsection we have proved that when $f$ satisfies {\bf (f3)} and $J$ satisfies {\bf (J)} and {\bf (J2)}, then \eqref{semi-wave}-\eqref{fby} has a unique solution pair $(c,\phi)=(c_0, \phi^{c_0})$, with $\phi^{c_0}(x)$ decreasing in $x$. Now we show that under  condition {\bf (J)}, such a pair $(c_0, \phi^{c_0})$ exists if and only if  {\bf (J1)} holds.

 \begin{theorem}\label{thm-semi-wave}
 Suppose that $J$ satisfies {\bf (J)} and $f$ satisfies {\bf (f3)}. Then \eqref{semi-wave}-\eqref{fby} has a solution pair $(c,\phi)=(c_0, \phi^{c_0})$ with $\phi^{c_0}(x)$ nonincreasing in $x$  if and only if {\bf (J1)} holds. Moreover, when {\bf (J1)} holds, such a pair is unique and $c_0>0$,  $\phi^{c_0}(x)$ is strictly decreasing in $x$.
 \end{theorem}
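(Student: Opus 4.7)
The plan is to handle necessity directly and sufficiency by approximating $J$ with compactly supported kernels satisfying {\bf (J2)}, then invoking Theorems \ref{thm-semiwave-uniqueness} and \ref{thm-speed} and passing to a limit. For necessity, if $(c_0,\phi^{c_0})$ satisfies \eqref{semi-wave}--\eqref{fby} with $\phi^{c_0}$ nonincreasing and $\phi^{c_0}(-\infty)=1$, then $\phi^{c_0}(x)\ge 1/2$ for $x\le -X$ with $X$ large, so
\[
\frac{c_0}{\mu}\;\ge\;\frac{1}{2}\int_{-\infty}^{-X}\!\int_0^{\infty}J(x-y)\,dy\,dx,
\]
and finiteness of the left-hand side forces {\bf (J1)}.

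For sufficiency, assume {\bf (J1)}. If in addition {\bf (J2)} holds the conclusion is already contained in the results of Subsection 2.3, so suppose {\bf (J2)} fails. Define $J_n(x):=\kappa_n^{-1}J(x)\chi_{[-n,n]}(x)$ with $\kappa_n:=\int_{-n}^n J\to 1$; each $J_n$ is even, compactly supported, and satisfies {\bf (J)} and {\bf (J2)}, so Theorems \ref{thm-semiwave-uniqueness} and \ref{thm-speed} yield a unique pair $(c_n,\phi_n)$ with $\phi_n$ strictly decreasing on $(-\infty,0]$. From \eqref{fby} and $\phi_n\le 1$,
\[
c_n\;\le\;\mu\kappa_n^{-1}\!\int_{-\infty}^{0}\!\int_0^{\infty}J(x-y)\,dy\,dx,
\]
which is uniformly finite by {\bf (J1)}. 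To extract a meaningful limit I would translate $\phi_n$ by $\tilde x_n<0$ chosen with $\phi_n(\tilde x_n)=1/2$, set $\tilde\phi_n(\cdot)=\phi_n(\cdot+\tilde x_n)$, and apply Helly's theorem with a diagonal argument to obtain $c_n\to c_\infty\in[0,\infty)$ and $\tilde\phi_n\to\tilde\phi_\infty$ pointwise, with $\tilde\phi_\infty$ nonincreasing and $\tilde\phi_\infty(0)=1/2$. Arguing as in Theorem \ref{thm-semiwave-existence}, if $-\tilde x_n\to\infty$ then $\tilde\phi_\infty$ solves the full-line equation with kernel $J$, and monotonicity together with {\bf (f3)} forces $\tilde\phi_\infty(\mp\infty)\in\{0,1\}$; if $c_\infty>0$ this is a traveling wave for $J$, contradicting Proposition \ref{tw} in the absence of {\bf (J2)}, while if $c_\infty=0$ it is a monotone stationary transition from $1$ to $0$, which I would rule out by multiplying $d(J*\phi-\phi)+f(\phi)=0$ by $\phi'$, integrating over $\mathbb R$, and using the evenness of $J$ to see that the nonlocal term integrates to zero while the remaining part gives the contradiction $\int_0^1 f(u)\,du=0$. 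Hence $-\tilde x_n$ is bounded and $c_\infty>0$, and setting $c_0=c_\infty$, $\phi^{c_0}(x)=\tilde\phi_\infty(x+x_0)$ with $x_0=\lim(-\tilde x_n)$ yields a solution to \eqref{semi-wave}; the identity \eqref{fby} passes to the limit by dominated convergence using the uniform $L^1$-bound provided by {\bf (J1)}.

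Strict monotonicity of $\phi^{c_0}$ in $x$ follows by applying Lemma \ref{lm-strongMP} to $w(x)=\phi^{c_0}(x-\delta)-\phi^{c_0}(x)$ for $\delta>0$, exactly as in Subsection 2.3. For uniqueness of the pair, suppose $(c_i,\phi_i)$, $i=1,2$, both solve \eqref{semi-wave}--\eqref{fby} with $c_1\le c_2$. Since $\phi_1'\le 0$, $\phi_1$ is a supersolution of the $c_2$-semi-wave equation, and the $k^*$-comparison argument of Subsection 2.3 (which relies only on {\bf (J)} and {\bf (f3)}) then yields $\phi_1\ge\phi_2$. If $c_1<c_2$, Lemma \ref{lm-strongMP} upgrades this to $\phi_1>\phi_2$ strictly on $(-\infty,0)$, and substituting into \eqref{fby} gives $c_1>c_2$, a contradiction; hence $c_1=c_2$ and uniqueness of the semi-wave at a fixed speed forces $\phi_1\equiv\phi_2$. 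The principal obstacle in this plan is the exclusion of the degenerate scenario $c_\infty=0$ in the approximation step, i.e., the ruling out of nontrivial monotone stationary transition solutions of the nonlocal equation, which hinges on the energy identity above made possible by the evenness of $J$.
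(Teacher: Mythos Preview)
Your overall strategy---approximate $J$ by compactly supported kernels, invoke the results under {\bf (J2)}, and pass to the limit---matches the paper's.  The paper organizes the limit passage slightly differently: it first constructs the full family of semi-waves $\phi^c$ for $J$ and every $c>0$ (by limiting from $\phi^c_n$), verifies that the conclusions of Theorem~\ref{thm-semiwave-uniqueness} persist, and then argues as in Theorem~\ref{thm-speed} that $c\mapsto c-\mu\int\!\!\int J\phi^c$ has a unique zero.  Your uniqueness argument, comparing two pairs directly via the $k^*$-method and then using \eqref{fby} to force $c_1=c_2$, is a clean and legitimate alternative.

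There is, however, a genuine gap in your exclusion of the degenerate case $c_\infty=0$.  Your energy identity multiplies the limiting equation by $\phi'$ and integrates, but when $c_\infty=0$ the limit $\tilde\phi_\infty$ is obtained only from Helly's theorem: it is monotone, hence a.e.\ differentiable, but you have no absolute continuity, so the substitution $\int_{\mathbb R} f(\phi)\phi'\,dx=-\int_0^1 f$ and the integration by parts hidden in $\int(J*\phi-\phi)\phi'=0$ are not justified.  The paper sidesteps this entirely: Proposition~\ref{tw} (Yagisita) already asserts that when {\bf (J2)} fails there is \emph{no} nonincreasing $L^\infty$ solution of \eqref{tw-cauchy} for \emph{any} speed, including $c=0$; this is exactly what is needed and requires no regularity beyond monotonicity.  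A second, smaller gap: from ruling out ``$-\tilde x_n\to\infty$'' you conclude ``$-\tilde x_n$ bounded \emph{and} $c_\infty>0$'', but the bounded case with $c_\infty=0$ is not covered by your dichotomy.  This is easily fixed: once $-\tilde x_n\to x_0$ and \eqref{fby} passes to the limit (which it does, by dominated convergence under {\bf (J1)}), the resulting identity $c_\infty=\mu\int_{-\infty}^0\!\int_0^\infty J(x-y)\phi^{c_0}(x)\,dy\,dx$ is strictly positive because $\phi^{c_0}(-x_0)=\tfrac12$ and $J$ has noncompact support (since {\bf (J2)} fails).  The paper makes this link explicit in its proof: from $c_n\to 0$ and \eqref{fby} it first deduces $\phi_n\to 0$ locally, hence $\tilde x_n\to-\infty$, and only then invokes Proposition~\ref{tw}.
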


 We prove Theorem \ref{thm-semi-wave} (which is a restatement of Theorem \ref{thm2}) by two lemmas.

  \begin{lemma}\label{J1-suf}
  Suppose that {\bf (J)} and {\bf (J1)} hold. Then \eqref{semi-wave}-\eqref{fby} has a unique solution pair $(c,\phi)=(c_0, \phi^{c_0})$, and $c_0>0$, $\phi^{c_0}(x)$ is strictly decreasing in $x$.
  \end{lemma}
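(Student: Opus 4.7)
\emph{Plan.} The strategy is approximation: we reduce the problem to the already-solved case where \textbf{(J2)} also holds. Choose kernels $J_n$ satisfying \textbf{(J)} and \textbf{(J2)} with $J_n\to J$ in $L^1(\mathbb R)$, for instance the normalised truncations $J_n:=J\chi_{[-n,n]}/\int_{-n}^n J$. For each large $n$, Theorems \ref{thm-semiwave-uniqueness} and \ref{thm-speed} furnish a unique pair $(c_n,\phi_n)$ with $\phi_n$ strictly decreasing on $(-\infty,0]$, solving the semi-wave equation for $J_n$ together with the free-boundary identity $c_n=\mu\int_{-\infty}^0\int_0^{+\infty} J_n(x-y)\phi_n(x)\,dy\,dx$. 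Since $\phi_n\le 1$, condition \textbf{(J1)} yields $c_n\le\mu\int_{-\infty}^0\!\int_0^{+\infty}J_n(x-y)\,dy\,dx\to \mu c(J)<\infty$, so the sequence $(c_n)$ is bounded.

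\emph{Passing to the limit.} Along a subsequence $c_n\to c_0\in[0,\infty)$; by Helly's selection theorem applied to the monotone $\phi_n$, pass to a further subsequence with $\phi_n\to\phi$ pointwise on $(-\infty,0]$, where $\phi$ is nonincreasing and $\phi(0)=0$. Writing \eqref{semi-wave} and \eqref{fby} in integrated form and applying dominated convergence (using $J_n\to J$ in $L^1$ and $\phi_n\le 1$), the limit $(c_0,\phi)$ satisfies both equations for the kernel $J$. Continuity and $C^1$-regularity of $\phi$ on $(-\infty,0)$ then follow from a bootstrap through the equation once $c_0>0$ is established.

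\emph{The key difficulty is to rule out the trivial alternative $\phi\equiv 0$.} When \textbf{(J2)} also holds, the conclusion is already delivered by Theorem \ref{thm-speed}; so assume \textbf{(J2)} fails. Suppose toward contradiction that $\phi\equiv 0$. Since each $\phi_n$ is continuous and strictly decreasing from $1$ at $-\infty$ to $0$ at $0$, there exists $y_n<0$ with $\phi_n(y_n)=1/2$, and necessarily $y_n\to-\infty$. Translate: $\tilde\phi_n(x):=\phi_n(x+y_n)$ is nonincreasing on $(-\infty,-y_n)$, with $\tilde\phi_n(0)=1/2$, and satisfies the same equation but with the integration domain shifted to $(-\infty,-y_n)$. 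Another application of Helly (and a diagonal argument) extracts a pointwise limit $\tilde\phi:\mathbb R\to[0,1]$, nonincreasing with $\tilde\phi(0)=1/2$. Passing to the limit in the translated integrated equation (using $-y_n\to+\infty$) produces a monotone solution on all of $\mathbb R$ of the Cauchy-problem traveling-wave equation \eqref{tw-cauchy} with speed $c_0$. Monotonicity and \textbf{(f3)} force the limits at $\pm\infty$ to be zeros of $f$, hence $\tilde\phi(-\infty)=1$ and $\tilde\phi(+\infty)=0$, so $\tilde\phi$ is a genuine monotone traveling wave for \eqref{Cauchy}. This contradicts Proposition \ref{tw}, since \textbf{(J2)} fails. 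Therefore $\phi\not\equiv 0$.

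\emph{Concluding properties.} For $c_0>0$: if $c_0=0$, Lemma \ref{lm-strongMP} applied to $\phi$ (nonnegative and nontrivial) forces $\phi>0$ on $(-\infty,0)$; then letting $x\to 0^-$ in the equation gives $d(J*\phi)(0)=0$, which is impossible because $J(0)>0$ and $\phi>0$ in a left neighbourhood of $0$. Strict monotonicity of $\phi^{c_0}$ follows from Lemma \ref{lm-strongMP} applied to $w(x):=\phi(x-\delta)-\phi(x)\ge 0$ for each $\delta>0$, since the alternative $w\equiv 0$ would make $\phi$ periodic of period $\delta$, incompatible with $\phi(-\infty)=1\ne 0=\phi(0)$. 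Uniqueness of the semi-wave at fixed speed is the sliding argument of Subsection 2.3, which uses only \textbf{(J)} and Lemma \ref{lm-strongMP}; uniqueness of the pair $(c_0,\phi^{c_0})$ follows because a semi-wave at smaller speed is a supersolution at larger speed, so Lemma \ref{lm-strongMP} makes $\phi^c$ strictly decreasing in $c$, and hence the map $c\mapsto c-\mu\int_{-\infty}^0\!\int_0^{+\infty} J(x-y)\phi^c(x)\,dy\,dx$ is strictly increasing, admitting at most one zero. The main obstacle throughout is the nontriviality step, which crucially exploits Proposition \ref{tw} via the translation argument.
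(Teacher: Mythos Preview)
Your proof is correct and follows essentially the same approach as the paper: approximate $J$ by kernels $J_n$ satisfying \textbf{(J2)}, extract a limiting pair $(c_0,\phi)$, and rule out degeneracy by translating to the $\tfrac12$-level to produce a monotone traveling wave on all of $\mathbb R$, contradicting Proposition~\ref{tw} since \textbf{(J2)} fails. The paper organizes matters slightly differently---it first builds semi-waves $\phi^c$ for \emph{every} $c>0$ (by showing $c_*^n\to+\infty$) and uses that family for uniqueness, and it packages the nondegeneracy step as ``$c_0=0\Rightarrow\phi_n\to 0$ via \eqref{fby}, then translate'' rather than separating ``$\phi\equiv 0$'' and ``$c_0=0$'' as you do---but the essential content is the same; note too that your Step~4 can be shortened, since once $\phi\not\equiv 0$ the limit of \eqref{fby} directly gives $c_0>0$.
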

  \begin{proof} We only need to consider the case that {\bf (J2)} does not hold.
  Let $J_n(x)$ be a sequence satisfying {\bf (J)} and {\bf (J2)} such that
  \[
 \lim_{n\to\infty} J_n(x)=J(x) \mbox{ locally uniformly in } \mathbb R\]
 and
 \[ \lim_{n\to\infty}
 \int_{-\infty}^0\int_0^\infty |J_n(x-y)-J(x-y)|dydx=0.
 \]
 Such a sequence can be easily obtained by letting $J_n(x)=J(x)\xi_n(x)$ with $\xi_n(x)$ a suitable sequence of smooth cut-off functions.

 Since {\bf (J2)} is satisfied by $J_n$,  from Proposition 2.1 we obtain a minimal wave speed $c_*=c_*^n>0$. We must have $\lim_{n\to\infty}c_*^n=+\infty$, for otherwise by passing to a subsequence we may assume $\lim_{n\to\infty} c_*^n=c_*^\infty\in [0, +\infty)$, and then by a similar argument \footnote{The case $c_*^\infty=0$ has to be proved differently, as in the last part of the proof of this lemma.
} to the proof of
 Theorem \ref{thm-semiwave-existence}, we can show that the Fisher-KPP equation in Proposition \ref{tw} with kernel function $J$ satisfying {\bf (J)} but not {\bf (J2)}
 has a traveling wave with speed $c_*^\infty$, which is a contradiction to the second part of the conclusion in that proposition.  Therefore, for any fixed $c>0$ and all
 large $n$, we have $0<c<c_*^n$ and so \eqref{semi-wave} with $J$ replaced by $J_n$ has a unique solution $\phi=\phi_n^c$. Moreover, we may argue as in the proof of Theorem \ref{thm-semiwave-existence} to conclude that
  $\phi^{c}_n(x)\to  \hat\phi^c(x)$ and for some $x_0>0$,   $\phi^c(x):= \hat \phi(x-x_0)$ satisfies \eqref{semi-wave}. The monotonicity of $\phi^c_n$ in $x$ and in $c$
  then implies that $\phi^c(x)$ is nonincreasing  in $x\in (-\infty, 0]$ for fixed $c>0$, and nonincreasing in $c\in (0, +\infty)$ for fixed $x<0$. We may now use Lemma \ref{lm-strongMP} as in the proof of Theorem \ref{thm-semiwave-uniqueness}, where the property {\bf (J2)} is not needed, to conclude that all the conclusions for $\phi^c$ in Theorem \ref{thm-semiwave-uniqueness} still hold for the current $\phi^c$.

 By Theorem \ref{thm-speed}, for each $n$, \eqref{semi-wave}-\eqref{fby} with $J$ replaced by $J_n$ has a unique solution pair
  $(c_{0,n}, \phi^{c_{0,n}})$. By \eqref{hat-c}, we have
  \[
  c_{0,n}\in (0, \mu c(J_n)) \ \ \mbox{ with  $ c(J_n)\to c(J)>0$ as $n\to\infty$.}
  \]
  Therefore, by passing to a subsequence we may assume that
  \[
  c_{0,n}\to c_0\in [0, \mu c(J)] \mbox{  as } n\to\infty.
  \]

  If $c_0>0$, then we may argue as in the proof of Theorem \ref{thm-semiwave-existence} to conclude that
  $\phi^{c_{0,n}}(x)\to \hat \phi(x)$ and for some $x_0>0$, $(c,\phi(x))=(c_0, \hat \phi(x-x_0))$ satisfies \eqref{semi-wave}-\eqref{fby}. As the conclusions in Theorem \ref{thm-semiwave-uniqueness} still hold we necessarily have $\hat\phi(x-x_0)=\phi^{c_0}(x)$. Moreover, $c_0>0$ is the unique $c$ such that \eqref{fby} holds.

  If $c_0=0$, we show that a contradiction occurs. For convenience, we denote
  \[
  \phi_n=\phi^{c_0,n},\; c_n=c_{0,n}.
  \]
  Then $\lim_{n\to\infty} c_n=0$ and
  $$
\begin{cases}
\displaystyle  d \int_{-\infty}^{0 }  J_n(x-y)   \phi_n (y)dy     - d  \phi_n + c_n \phi_n'+ f( \phi_n) =0,\; \phi_n'<0,   & \mbox{ for }  x<  0, \\
\displaystyle  \ \ \ \ \  \   \phi_n (0) =  0, \; \phi_n(-\infty)=1,& \\
\displaystyle \mu\int_{-\infty}^0\int_0^\infty J_n(x-y)\phi_n(x)dydx=c_n.&
\end{cases}
$$
Using $c_n\to 0$  we easily see that
\[
\lim_{n\to\infty}\int_{-\infty}^0\int_0^\infty J(x-y)\phi_n(x)dydx=0,
\]
and hence, due to the monotonicity of each $\phi_n$, and the assumption that $J$ does not satisfy {\bf (J2)} (and so it does not have compact support), we obtain
\[
\lim_{n\to\infty} \phi_n(x)=0 \mbox{ uniformly on every bounded interval in } (-\infty, 0].
\]
Choose $x_n<0$ such that $\phi_n(x_n)=1/2$. Then $x_n\to-\infty$ as $n\to\infty$. We now define
\[
\tilde\phi_n(x):=\phi_n(x+x_n).
\]
Then
$$
\begin{cases}
\displaystyle  d \int_{-\infty}^{-x_n }  J_n(x-y)   \tilde\phi_n (y)dy     - d  \tilde\phi_n + c_n \tilde\phi_n'+ f(\tilde \phi_n) =0,\; \tilde\phi_n'<0,   & \mbox{ for }  x<  -x_n, \\
\displaystyle  \ \ \ \ \  \   \tilde\phi_n (0) =  1/2, \; \tilde\phi_n(-\infty)=1.&
\end{cases}
$$
Since $\tilde\phi_n'<0$ and $0\leq \tilde\phi_n\leq 1$, by Helly's theorem, $\{\tilde\phi_n\}$ has a subsequence, which for convenience we still denote by itself, such that, as $n\to\infty$,
$\tilde\phi_n(x)\to \tilde\phi(x)$ for almost every $x\in\mathbb R$. Clearly $\tilde\phi(x)$ is nonincreasing and $\tilde\phi(0)=1/2$.

From the above equations for $\tilde\phi_n$ we obtain, for any $z\in \mathbb R$ and all large $n$,
\[
d\int_0^z\int_{-\infty}^{-x_n}J_n(x-y)\tilde\phi_n(y)dydx-d\int_0^z\tilde\phi_n(x)dx+c_n\tilde\phi_n(z)-c_n/2+\int_0^zf(\tilde\phi_n(x))dx=0.
\]
 Letting $n\to\infty$ and making use of the dominated convergence theorem, we deduce
\[
d\int_0^z\int_{-\infty}^{\infty}J(x-y)\tilde\phi(y)dydx-d\int_0^z\tilde\phi(x)dx+\int_0^zf(\tilde\phi(x))dx=0.
\]
Since $z\in\mathbb R$ is arbitrary, this implies that
\[
d\int_{-\infty}^{\infty}J(x-y)\tilde\phi(y)dy-d\tilde\phi(x)+f(\tilde\phi(x))=0 \mbox{ for a.e. } x\in\mathbb R.
\]
But this implies that $\tilde\phi$ is a traveling wave with speed $c=0$, a contradiction to Proposition \ref{tw}, since we have assumed that $J$ does not satisfy {\bf (J2)}. Thus we have proved that $c_0=0$ cannot happen, and the proof is complete.
\end{proof}

 \begin{lemma}\label{Ji-nec}
 Suppose that {\bf (J)} holds and \eqref{semi-wave}-\eqref{fby} has a solution pair $(c,\phi)=(c_0, \phi^{c_0})$ with $\phi^{c_0}(x)$ nonincreasing in $x$.
 Then $J$ satisfies {\bf (J1)}.
  \end{lemma}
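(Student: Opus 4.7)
The plan is to exploit the free boundary identity \eqref{fby} directly, using only the finiteness of $c_0$, the boundary value $\phi^{c_0}(-\infty)=1$, and monotonicity. The key observation is that {\bf (J1)} is exactly the assertion that the function $a(x)=\int_0^{+\infty}J(x-y)dy$ is integrable on $(-\infty,0]$, so it suffices to bound $\int_{-\infty}^0 a(x)\,dx$ in terms of $c_0/\mu$.

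First I would use $\phi^{c_0}(-\infty)=1$ together with the nonincreasing property to pick some $X<0$ for which $\phi^{c_0}(x)\geq \tfrac12$ for every $x\leq X$. Inserting this bound into \eqref{fby} gives
\begin{equation*}
\frac{c_0}{\mu}=\int_{-\infty}^{0}\!\int_{0}^{+\infty}J(x-y)\phi^{c_0}(x)\,dy\,dx\;\geq\;\frac{1}{2}\int_{-\infty}^{X}\!\int_{0}^{+\infty}J(x-y)\,dy\,dx,
\end{equation*}
so the tail $\int_{-\infty}^{X} a(x)\,dx$ is controlled by $2c_0/\mu<+\infty$.

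Next I would handle the remaining bounded piece trivially: since $0\leq a(x)\leq \int_{\mathbb{R}}J=1$, one has $\int_{X}^{0} a(x)\,dx \leq |X|<+\infty$. Adding the two estimates yields $\int_{-\infty}^{0}a(x)\,dx<+\infty$, which is precisely {\bf (J1)}.

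There is essentially no obstacle here: the whole argument is a one-line lower bound on the right-hand side of \eqref{fby}, relying only on the asymptotic condition $\phi^{c_0}(-\infty)=1$ (which forces $\phi^{c_0}$ to stay away from $0$ on a half-line $(-\infty,X]$) and on the fact that $c_0$ is assumed finite as part of the solution pair. No information on $f$ beyond what is already encoded in $\phi^{c_0}$ is needed, and {\bf (J2)} plays no role. This confirms that condition {\bf (J1)} is necessary, completing, together with Lemma~\ref{J1-suf}, the proof of Theorem~\ref{thm-semi-wave} (equivalently Theorem~\ref{thm2}).
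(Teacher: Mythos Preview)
Your proof is correct and follows essentially the same route as the paper: use monotonicity of $\phi^{c_0}$ to get a positive lower bound on a half-line, insert this into \eqref{fby} to control the tail of $\int a(x)\,dx$, and treat the remaining bounded interval trivially via $a\leq 1$. The only cosmetic difference is that the paper fixes the cutoff at $X=-1$ and uses the constant $\phi^{c_0}(-1)$, whereas you choose $X$ so that $\phi^{c_0}\geq \tfrac12$ on $(-\infty,X]$; your version has the minor advantage of not implicitly relying on $\phi^{c_0}(-1)>0$.
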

  \begin{proof}
  Since $\phi^{c_0}(x)$ is nonincreasing in $x$, we have
  \[
  c_0=\mu\int_{-\infty}^0\int_0^\infty J(x-y)\phi^{c_0}(x)dydx\geq \mu \,\phi^{c_0}(-1)\int_{-\infty}^{-1}\int_0^\infty J(x-y)dydx.
  \]
  Thus
  \[
  \int_{-\infty}^{-1}\int_0^\infty J(x-y)dydx=\int_{-\infty}^{-1}a(x)dx<+\infty.
  \]
  Since $a(x)$ is continuous, clearly
  \[
  \int_{-\infty}^{0}\int_0^\infty J(x-y)dydx=\int_{-\infty}^{-1}a(x)dx+\int_{-1}^0 a(x)dx<+\infty.
  \]
  Hence {\bf (J1)} holds.
  \end{proof}

\section{Spreading speed of \eqref{main-fb}}

Suppose that $f$ satisfies {\bf (f3)} and $J$ satisfies {\bf (J)} and {\bf (J1)}. Then there exists a unique pair $(c_0, \phi^{c_0})$ satisfying \eqref{semi-wave} and \eqref{fby}. Let $(u,g,h)$ be the unique solution of \eqref{main-fb} and suppose that spreading happens, that is
\[
\lim_{t\to\infty}h(t)=-\lim_{t\to\infty} g(t)=\infty,\; \mbox{ and } \lim_{t\to\infty} u(t,x)=1 \mbox{ locally uniformly in $x\in\mathbb R$}.
\]
We are going to show that
\[
\lim_{t\to\infty}\frac{h(t)}{t}=-\lim_{t\to\infty}\frac{g(t)}{t}=c_0,
\]
which is part (i) of Theorem \ref{thm1}.

It suffices to show the conclusion for $h(t)$, as $\tilde u(t,x):=u(t,-x)$ satisfies \eqref{main-fb} with free boundaries
$x=\tilde h(t):=-g(t),\; x=\tilde g(t):=-h(t)$ and initial function $\tilde u_0(x):=u_0(-x)$.

\begin{lemma}\label{<c0} Under the above assumptions, we have
\[
\limsup_{t\to\infty}\frac{h(t)}{t}\leq c_0.
\]
\end{lemma}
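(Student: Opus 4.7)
The strategy is to construct a traveling upper solution for the right free boundary of \eqref{main-fb} from a semi-wave $\phi^c$ of speed $c\in(c_0,c_*)$, and then invoke the comparison principle for \eqref{main-fb} established in \cite{CDLL2019}. Under {\bf (J1)}, Lemma \ref{J1-suf} gives the semi-wave $\phi^c$ (strictly decreasing, $\phi^c(-\infty)=1$, $\phi^c(0)=0$) for every $c>0$, and Theorem \ref{thm-speed} together with the strict monotonicity of $c\mapsto c-M(c)$, where
\[
M(c):=\mu\!\int_{-\infty}^0\!\!\int_0^{+\infty}\!J(x-y)\phi^c(x)\,dy\,dx,
\]
yields $M(c)<c_0<c$ for any $c>c_0$. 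This strict gap provides the slack needed to make the upper free boundary outrun the flux it generates.

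\textbf{Step 1 (reduction).} For a small $\eta>0$ to be fixed, I would first use (f3) to note that $\max\{1,\|u_0\|_\infty\}$ is a stationary supersolution, so $u$ is globally bounded, and a comparison against the ODE $v'=f(v)$ gives $\limsup_{t\to\infty}\|u(t,\cdot)\|_\infty\leq 1$. Pick $T>0$ with $u(T,\cdot)\leq 1+\eta$ on $[g(T),h(T)]$; after a time shift it suffices to prove the bound assuming $u_0\leq 1+\eta$.

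\textbf{Step 2 (upper solution).} Fix $c\in(c_0,c_*)$ and $\eta>0$ so small that $(1+\eta)M(c)<c$ (possible since $M(c)<c$). Since $\phi^c$ is strictly decreasing with $\phi^c(-\infty)=1$, choose $K\geq h_0$ large enough that $(1+\eta)\phi^c(x-K)\geq u_0(x)$ on $[-h_0,h_0]$, and set
\[
\bar h(t):=ct+K,\qquad \bar u(t,x):=(1+\eta)\,\phi^c\bigl(x-\bar h(t)\bigr)\ \text{for}\ x\leq\bar h(t),
\]
extended by $0$ for $x>\bar h(t)$. Using $\bar h'(t)=c$, the semi-wave equation \eqref{semi-wave} for $\phi^c$, and the KPP inequality $f((1+\eta)s)\leq(1+\eta)f(s)$ coming from (f3), a direct computation shows that for $x<\bar h(t)$,
\[
\bar u_t - d\!\int_{\R}\!J(x-y)\bar u(t,y)\,dy + d\,\bar u - f(\bar u)\;\geq\;0,
\]
while the choice of $\eta$ gives the free boundary inequality
\[
\bar h'(t)=c\;\geq\;(1+\eta)M(c)=\mu\!\int_{-\infty}^{\bar h(t)}\!\!\int_{\bar h(t)}^{+\infty}\!J(x-y)\bar u(t,x)\,dy\,dx.
\]
Together with $\bar h(0)=K\geq h_0$ and $\bar u(0,\cdot)\geq u_0(\cdot)$ this exhibits $(\bar u,\bar h)$ as an upper solution for the right free boundary of \eqref{main-fb}.

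\textbf{Step 3 (comparison and conclusion).} Applying the comparison principle from \cite{CDLL2019} (with the left boundary of $\bar u$ taken as $-\infty$, or, if the statement there is two-sided, using $\bar g_0$ very negative and letting $\bar g_0\to-\infty$) gives $h(t)\leq\bar h(t)=ct+K$ for all $t\geq 0$, whence $\limsup_{t\to\infty}h(t)/t\leq c$; letting $c\downarrow c_0$ completes the proof. The chief technical obstacle is precisely invoking the comparison in this one-sided form; if a direct quotation is unavailable, the fallback is a first-crossing-time argument, supposing $t^*:=\inf\{t>0:h(t)=\bar h(t)\}<\infty$ and using a nonlocal maximum principle on $[0,t^*]\times(-\infty,\bar h(t)]$ to deduce $u\leq\bar u$ there, which forces $h'(t^*)\leq(1+\eta)M(c)<c=\bar h'(t^*)$, contradicting the definition of $t^*$.
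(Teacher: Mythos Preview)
Your argument is correct and follows the same template as the paper: time-shift so that $u\le 1+\eta$, build a right-moving upper barrier from a scaled semi-wave, and close with a first-crossing/comparison argument. The only real difference is the source of the slack. The paper keeps the \emph{same} semi-wave $\phi^{c_0}$ but pushes it at speed $c_0+\delta$; the strict sign of $(\phi^{c_0})'$ then produces a strict PDE inequality, and the free boundary inequality follows from $(1+\epsilon)c_0<c_0+\delta$. You instead switch to a \emph{faster} semi-wave $\phi^c$ with $c>c_0$ and move it at its own speed $c$; the PDE inequality is then only non-strict (which is fine for the comparison principle and for the first-crossing argument, since your free boundary inequality $\bar h'(t)=c>(1+\eta)M(c)$ is still strict). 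Both routes are short and equivalent in strength.

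Two small remarks. First, the existence of $\phi^c$ for $c>c_0$ and the strict inequality $M(c)<c$ under {\bf (J1)} alone are not the stated content of Lemma \ref{J1-suf} or Theorem \ref{thm-speed}; they are established in the course of the proof of Lemma \ref{J1-suf} (via the approximation by $J_n$ and the uniqueness argument), so you should cite that proof rather than the lemma/theorem statements. Also, writing ``$c\in(c_0,c_*)$'' presupposes {\bf (J2)}; under {\bf (J1)} without {\bf (J2)} there is no $c_*$, but $\phi^c$ exists for every $c>0$, so simply take $c>c_0$. Second, in Step 2 you need $u_0<1+\eta$ strictly on $[-h_0,h_0]$ (not just $\le$) to guarantee $(1+\eta)\phi^c(\cdot-K)\ge u_0$ for large $K$, since $(1+\eta)\phi^c(-\infty)=1+\eta$ is never attained; the paper handles this by taking $u(T,\cdot)\le 1+\epsilon/2$.
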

\begin{proof}
For any given $\epsilon>0$  we define
\[
\delta:=2\epsilon c_0,\; \overline h(t):=(c_0+\delta)t+L,\; \overline u(t,x)=: (1+\epsilon)\phi^{c_0}(x-\overline h(t)),
\]
with $L>0$ to be determined. A simple comparison argument with the ODE problem
\[
v'=f(v),\; v(0)=\|u_0\|_\infty
\]
shows that $u(t,x)\leq v(t)$ and hence
\[
\limsup_{t\to\infty}u(t,x)\leq 1 \mbox{ uniformly for $x\in [g(t), h(t)]$}.
\]
Thus there exists $T>0$ large so that
\[
u(T+t,x)\leq 1+\frac\epsilon 2 \ \mbox{ for }\ t\geq 0, \ \ x\in [g(T+t), h(T+t)].
\]
Since $\phi^{c_0}(-\infty)=1$, we may choose $L>0$ large such that $\overline h(0)=L>h(T)$ and
\begin{equation}\label{u0}
\overline u(0,x)=(1+\epsilon)\phi^{c_0}(x-L)>1+\frac \epsilon 2\geq u(T,x) \mbox{ for } x\in [g(T), h(T)],
\end{equation}

We show next that
\begin{equation}\label{diff-ineq}
\overline u_t\geq d\int_{g(t+T)}^{\overline h(t)}J(x-y)\overline u(t,y)dy-d\overline u(t,x)+f(\overline u(t,x))
\end{equation}
for $t>0$ and $x\in [g(t+T), \overline h(t)]$, and
\begin{equation}\label{h'}
\overline h'(t)> \mu \int_{g(t+T)}^{\overline h(t)}\int_{\overline h(t)}^\infty J(x-y)\overline u(t,x)dydx \mbox{ for } t>0.
\end{equation}
Indeed,
\begin{eqnarray*}
\overline u_t &=& -(1+\epsilon)(c_0+\delta)(\phi^{c_0})'(x-\overline h(t))>-(1+\epsilon) c_0(\phi^{c_0})'(x-\overline h(t))\\
& = & (1+\epsilon)\left[d\int_{-\infty}^{\overline h(t)}J(x-y)\phi^{c_0}(y-\overline h(t))dy-d\phi^{c_0}(x-\overline h(t))+f(\phi^{c_0}(x-\overline h(t))\right]\\
&=&d\int_{-\infty}^{\overline h(t)}J(x-y)\overline u(t,y)dy-d\overline u(t,x)+(1+\epsilon)f(\phi^{c_0}(x-\overline h(t))\\
&\geq&d\int_{-\infty}^{\overline h(t)}J(x-y)\overline u(t,y)dy-d\overline u(t,x)+f(\overline u(t,x))\\
&\geq&d\int_{g(t+T)}^{\overline h(t)}J(x-y)\overline u(t,y)dy-d\overline u(t,x)+f(\overline u(t,x))
 \end{eqnarray*}
 for $t>0$ and $x<\overline h(t)$, where we have used {\bf (f3)}. This proves \eqref{diff-ineq}.

 To show \eqref{h'} we calculate
 \begin{eqnarray*}
 &&\mu \int_{g(t+T)}^{\overline h(t)}\int_{\overline h(t)}^\infty J(x-y)\overline u(t,x)dydx\\
 &\leq &\mu \int_{-\infty}^{\overline h(t)}\int_{\overline h(t)}^\infty J(x-y)\overline u(t,x)dydx\\
  &= &\mu (1+\epsilon)\int_{-\infty}^{0}\int_{0}^\infty J(x-y)\phi^{c_0}(x)dydx\\
 &=& (1+\epsilon)c_0<c_0+\delta=\overline h'(t).
 \end{eqnarray*}
  Thus \eqref{h'} holds.

 We are now ready to show that
 \begin{equation}\label{u<u}
 h(t+T)< \overline h(t) \mbox{ and } u(t+T,x)< \overline u(t,x) \mbox{ for } t>0,\; x\in [g(t+T), h(t+T)].
 \end{equation}
 By \eqref{u0} and $\overline h(0)>h(T)$, we see that the above inequalities hold for $t>0$ small. If the above inequalities do not hold for all $t>0$, then there is a first time moment $t^*>0$ such that at least one of them is violated at $t=t^*$, i.e.,
 the above inequalities hold for $t\in (0, t^*)$, and
 \[
 \hspace{-10cm}{\rm (i)}\ \ \   h(t^*+T)= \overline h(t^*),\ \ \mbox{ or } \vspace{-0.3cm}
 \]
 \[
{\rm (ii)}\ \  h(t^*+T)<\overline h(t^*) \mbox{ and } u(t^*+T,x^*)= \overline u(t^*,x^*) \mbox{ for some } x^*\in [g(t^*+T), h(t^*+T)].
 \]
 If (i) happens, then necessarily $h'(t^*+T)\geq \overline h'(t^*)$. On the other hand,
 \begin{eqnarray*}
 \overline h'(t^*)&>&\mu \int_{g(t^*+T)}^{\overline h(t^*)}\int_{\overline h(t^*)}^\infty J(x-y)\overline u(t^*,x)dydx\\
 &=&\mu\int_{g(t^*+T)}^{h(t^*+T)}\int_{h(t^*+T)}^\infty J(x-y) \overline u(t^*, x)dydx\\
 &\geq &\mu \int_{g(t^*+T)}^{h(t^*+T)}\int_{h(t^*+T)}^\infty J(x-y) u(t^*+T,x)dydx\\
 & =&h'(t^*+T),
 \end{eqnarray*}
 where we have used
 \[
 u(t^*+T,x)\leq \overline u(t^*,x) \mbox{ for }  x\in [g(t^*+T), h(t^*+T)].
 \]
 Thus (i) leads to a contradiction.

 If (ii) happens, then due to $\overline u(t,x)>0$ for $x\in \{g(t+T), h(t+T)\}$ for $t\in (0, t^*]$, and $\overline u(0,x)>u(T,x)$ for $x\in [g(T), h(T)]$, we can use the comparison principle in \cite{CDLL2019} to conclude that
 \[
 \overline u(t^*,x)>u(t^*+T,x) \mbox{ for } x\in [g(t+T), h(t+T)],
 \]
 and so we again reach a contradiction. Therefore \eqref{u<u} holds, and
 \[
 \limsup_{t\to\infty}\frac{h(t)}{t}\leq \lim_{t\to\infty} \frac{\overline h(t-T)}{t}=c_0+\delta=c_0+2\epsilon c_0.
 \]
 Letting $\epsilon\to 0$, we immediately obtain $\limsup_{t\to\infty} h(t)/t\leq c_0$.
 \end{proof}

\begin{lemma}\label{>c0} Under the assumptions of Lemma \ref{<c0}, we have
\[
\liminf_{t\to\infty}\frac{h(t)}{t}\geq c_0.
\]
\end{lemma}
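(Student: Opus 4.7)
My plan is to establish $\liminf_{t\to\infty} h(t)/t\geq c_0-\delta$ for arbitrary small $\delta>0$, and then let $\delta\to 0$. To do this, I would construct a subsolution $(\underline u,\underline g,\underline h)$ to \eqref{main-fb} whose right boundary moves at speed $c_0-\delta$, and invoke the comparison principle from \cite{CDLL2019} (with the inequalities reversed relative to Lemma \ref{<c0}) to conclude $h(T_0+t)\geq \underline h(t)$ for all $t\geq 0$ after a sufficiently large waiting time $T_0$.

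The key ingredient is the semi-wave $\phi^c$ with $c:=c_0-\delta$. By the strict monotonicity of $c\mapsto M(c):=\mu\int_{-\infty}^0\int_0^\infty J(x-y)\phi^c(x)\,dy\,dx$ (Theorem \ref{thm-speed} under {\bf (J2)}, extended via the approximation scheme of Lemma \ref{J1-suf} under {\bf (J1)}) and the normalization $M(c_0)=c_0$, we have $M(c)>c$. Fix $\epsilon\in(0,1)$ small enough that $(1-\epsilon)M(c)>c$ still holds. Since spreading occurs, I can choose large $L>0$, then large $T_0>0$, so that $h(T_0)>L$, $g(T_0)<-L$, and $u(T_0+t,x)\geq 1-\epsilon/2$ for all $t\geq 0$ on $[-L,L]$. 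Then set
\begin{equation*}
\underline h(t):=ct+L,\quad \underline g(t):=-L,\quad \underline u(t,x):=(1-\epsilon)\phi^c(x-\underline h(t))\ \text{on}\ [\underline g(t),\underline h(t)].
\end{equation*}
The initial comparison $\underline u(0,x)\leq u(T_0,x)$ and the left-boundary comparison $\underline u(t,-L)\leq u(T_0+t,-L)$ for all $t\geq 0$ follow from the choice of $\epsilon,L,T_0$ and the fact that $(1-\epsilon)\phi^c\leq 1-\epsilon<1-\epsilon/2$.

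The subsolution inequalities must now be verified. The free boundary inequality reduces, via $(1-\epsilon)M(c)>c$, to showing that the truncation correction $\int_{-\infty}^{-L}\int_0^\infty J(x-y)\,dy\,dx$ is negligible for large $L$, which is precisely the content of {\bf (J1)}. The interior differential inequality, after subtracting the semi-wave identity satisfied by $\phi^c$, reduces to
\begin{equation*}
d\int_{-\infty}^{-L}J(x-y)(1-\epsilon)\phi^c(y-\underline h(t))\,dy\leq f((1-\epsilon)\phi^c(x-\underline h(t)))-(1-\epsilon)f(\phi^c(x-\underline h(t))),
\end{equation*}
whose right-hand side is nonnegative by the KPP condition {\bf (f3)}. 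The main obstacle is the uniform-in-$x$ control of the left-tail integral on the left-hand side against the KPP slack on the right-hand side, which degenerates as $\phi^c$ approaches $0$ or $1$. I would handle this by splitting $[\underline g(t),\underline h(t)]$ into a bulk region where $\phi^c$ is bounded away from $\{0,1\}$ (where the KPP slack has a positive lower bound and dominates the tail by {\bf (J1)} once $L$ is large), a right boundary layer near $x=\underline h(t)$ (where the distance $-L-y+\underline h(t)$ is large, so the tail is small), and a left boundary layer near $x=-L$ (where $\phi^c\approx 1$ so $\underline u\approx 1-\epsilon$ and a small correction to $\underline h$ absorbs the remainder, with no effect on the asymptotic speed). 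Once the subsolution is validated, comparison yields $h(T_0+t)\geq \underline h(t)=(c_0-\delta)t+L$, whence $\liminf h(t)/t\geq c_0-\delta$; letting $\delta\to 0$ completes the proof.
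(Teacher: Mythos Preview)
Your approach has a genuine gap at the left boundary layer. With $\underline g(t)=-L$ fixed, the interior subsolution inequality you need is
\[
d(1-\epsilon)\int_{-\infty}^{-L}J(x-y)\,\phi^{c}(y-\underline h(t))\,dy \ \le\ f\big((1-\epsilon)\phi^{c}(z)\big)-(1-\epsilon)f\big(\phi^{c}(z)\big),\qquad z=x-\underline h(t).
\]
Near $x=-L$ we have $z\le -2L$, so $\phi^{c}(z)$ is close to $1$ and the right-hand side is approximately $f(1-\epsilon)=O(\epsilon)$. But the left-hand side, after the substitution $w=x-y$, equals $d(1-\epsilon)\int_{x+L}^{\infty}J(w)\phi^{c}(x-w-\underline h(t))\,dw$; at $x=-L$ this is essentially $d(1-\epsilon)\int_{0}^{\infty}J(w)\,dw=\tfrac{d}{2}(1-\epsilon)$, an $O(1)$ quantity. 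Thus the inequality fails for small $\epsilon$, and no ``small correction to $\underline h$'' repairs this: the defect is at the \emph{left} edge and is of fixed size, independent of the right boundary speed.

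This asymmetry is intrinsic. For a supersolution (Lemma~\ref{<c0}) truncating the convolution to $[g(t+T),\overline h(t)]$ only \emph{helps} the inequality; for a subsolution the same truncation \emph{hurts}, and at a stationary left wall it costs half the mass. The paper avoids this by a symmetric two-front construction
\[
\underline u(t,x)=(1-\epsilon)\big[\phi^{c_0}(x-\underline h(t))+\phi^{c_0}(-x-\underline h(t))-1\big],\qquad \underline h(t)=(c_0-\delta)t+L,
\]
so that both boundaries move outward and the discarded convolution tail always lives on $(-\infty,-\underline h(t))$, which is small by \textbf{(J1)}. Moreover, the paper uses $\phi^{c_0}$ (not $\phi^{c_0-\delta}$): pairing the $c_0$-semiwave with speed $c_0-\delta$ generates the negative drift term $(1-\epsilon)\delta\big[\phi'(x-\underline h)+\phi'(-x-\underline h)\big]$, which is what actually absorbs the residual errors in the boundary layers $[\underline h(t)-M,\underline h(t)]$ and $[-\underline h(t),-\underline h(t)+M]$. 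Your choice $c=c_0-\delta$ gains slack only in the free boundary inequality (via $M(c)>c$), not in the interior PDE, so you have no analogous mechanism.
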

\begin{proof} Since $f'(1)<0$, there exists $\delta_0>0$ small such that $f'(u)<0$ for $u\in [1-\delta_0, 1]$.
For any given $\epsilon\in (0, \delta_0]$, we define
\[
\delta:=2\epsilon c_0,\;\ \  \underline h(t):=(c_0-\delta)t+L\;\;\; \mbox{ and }
\]
\[
\underline u(t,x):=(1-\epsilon)\left[\phi^{c_0}(x-\underline h(t))+\phi^{c_0}(-x-\underline h(t))-1\right],
\]
with $L>0$ a large constant to be determined.
Clearly, for $t\geq 0$,
\[
0>\underline u(t,\pm \underline h(t))=(1-\epsilon)\left[\phi^{c_0}(-2\underline h(t))-1\right]\geq (1-\epsilon)\left[\phi^{c_0}(-2L)-1\right]
\to 0 \mbox{ as } L\to\infty.
\]
We show next that, if $L$ is chosen large enough, then
\begin{equation}\label{h'2}
\underline h'(t)\leq \mu\int_{-\underline h(t)}^{\underline h(t)}\int_{\underline h(t)}^\infty J(x-y)\underline u(t,x)dydx
\;\mbox{ for } t>0,
\end{equation}
and
\begin{equation}\label{u_t}
\underline u_t\leq d\int_{-\underline h(t)}^{\underline h(t)} J(x-y)\underline u(t,y)dy-d\underline u+f(\underline u)
\mbox{ for } x\in (-\underline h(t), \underline h(t)),\; t>0.
\end{equation}

To show \eqref{h'2}, we calculate
\begin{eqnarray*}
 &&\mu \int_{-\underline h(t)}^{\underline h(t)}\int_{\underline h(t)}^\infty J(x-y)\underline u(t,x)dydx\\
 &= &\mu(1-\epsilon) \int_{-2\underline h(t)}^{0}\int_{0}^\infty J(x-y)\phi^{c_0} (x)dydx\\
 &&+\mu(1-\epsilon)\int_{-2\underline h(t)}^{0}\int_{0}^\infty J(x-y)\left[\phi^{c_0}(-x-2\underline h(t))-1\right]dydx\\
  &= &(1-\epsilon)c_0-\mu(1-\epsilon)\int_{-\infty}^{-2\underline h(t)}\int_0^\infty J(x-y)\phi^{c_0} (x)dydx\\
 &&-\mu(1-\epsilon)\int_{-2\underline h(t)}^{0}\int_{0}^\infty J(x-y)\left[1-\phi^{c_0}(-x-2\underline h(t))\right]dydx.
 \end{eqnarray*}
 By {\bf (J1)}, we have, for all $t\geq 0$,
 \begin{eqnarray*}
0&\leq &\mu(1-\epsilon)\int_{-\infty}^{-2\underline h(t)}\int_0^\infty J(x-y)\phi^{c_0} (x)dydx\\
&\leq & \mu(1-\epsilon)\int_{-\infty}^{-2L}\int_0^\infty J(x-y)dydx<\frac 14 \epsilon c_0
\end{eqnarray*}
 provided that $L$ is large enough, say $L\geq L_1$. Moreover,
 \begin{eqnarray*}
0&\leq &\mu(1-\epsilon)\int_{-2\underline h(t)}^{0}\int_{0}^\infty J(x-y)\left[1-\phi^{c_0}(-x-2\underline h(t))\right]dydx\\
&\leq &\mu(1-\epsilon)\int_{-\infty}^{0}\int_{0}^\infty J(x-y)\left[1-\phi^{c_0}(-x-2\underline h(t))\right]dydx\\
&\leq&\mu(1-\epsilon)\int_{-\infty}^{-2L_1}\int_{0}^\infty J(x-y)dydx+\mu(1-\epsilon)\int_{-2L_1}^{0}\int_{0}^\infty J(x-y)\left[1-\phi^{c_0}(-2L)\right]dydx\\
&<&\frac 14 \epsilon c_0+\mu(1-\epsilon)\left[1-\phi^{c_0}(-2L)\right]\int_{-2L_1}^{0}\int_{0}^\infty J(x-y)dydx\\
&<&\frac 12\epsilon c_0\;\; \mbox{ for all } t\geq 0,
\end{eqnarray*}
provided that $L\geq L_2$ for some large enough $L_2>L_1$.

Thus for $L\geq L_2$ and all $t\geq 0$,
\[
\mu \int_{-\underline h(t)}^{\underline h(t)}\int_{\underline h(t)}^\infty J(x-y)\underline u(t,x)dydx
>(1-\epsilon)c_0-\frac 34 \epsilon c_0>(1-2\epsilon)c_0=\underline h'(t).
\]
This proves \eqref{h'2}.

Next we prove \eqref{u_t}. Firstly we need to extend $f(u)$ by defining
\[
f(u)=f'(0)u \mbox{ for } u<0.
\]
Secondly we fix several constants for later use. Due to $f(1)=0$ and $f'(u)<0$ for $u\in [1-\epsilon, 1]$,
we can choose $\tilde \epsilon>0$ small enough such that
\begin{equation}\label{tilde-ep}
2(1-\epsilon)f(1-\frac{\tilde \epsilon}2)<f(1-\epsilon) \mbox{ and } f'(u)<0 \mbox{ for } u\in [(1-\epsilon)(1-\tilde\epsilon), 1].
\end{equation}
Then using $\phi^{c_0}(-\infty)=1$ we can find $M>0$ large enough such that
\[
\phi^{c_0}(-M)>1-\frac{\tilde \epsilon}2,
\]
 which implies, in particular,
\begin{equation}\label{M-M}
\phi^{c_0}(x-\underline h(t)),\; \phi^{c_0}(-x-\underline h(t))\in (1-\frac{\tilde\epsilon}2, 1) \mbox{ for } x\in [-\underline h(t)+M,
\underline h(t)-M].
\end{equation}
Define
\[
\epsilon_0:=\inf_{x\in [-M, 0]}|(\phi^{c_0})'(x)|>0;
\]
then clearly
\begin{equation}\label{ep0}
\left\{
\begin{array}{ll}
(\phi^{c_0})'(x-\underline h(t))\leq -\epsilon_0 &\mbox{ for } x\in [\underline h(t)-M, \underline h(t)];\\
(\phi^{c_0})'(-x-\underline h(t))\leq -\epsilon_0 &\mbox{ for } x\in [-\underline h(t),  -\underline h(t)+M].
\end{array}
\right.
\end{equation}
Finally we set
\[
M_0:=\max_{u\in [0,1]}|f'(u)|,\;
\hat\epsilon:=\frac{1-\epsilon}{2M_0}\delta \epsilon_0.
\]

To simplify notations, in the following we write $\phi=\phi^{c_0}$. We have
\begin{eqnarray*}
\underline u_t&=& -(1-\epsilon)(c_0-\delta)\Big[\phi'(x-\underline h(t))+\phi'(-x-\underline h(t))\Big]\\
&=&(1-\epsilon)\delta \Big[\phi'(x-\underline h(t))+\phi'(-x-\underline h(t))\Big]\\
&&+(1-\epsilon)\Big[d\int_{-\infty}^0J(x-\underline h(t)-y)\phi(y)dy-d\phi(x-\underline h(t))+f(\phi(x-\underline h(t))\Big]\\
&&+(1-\epsilon)\Big[d\int_{-\infty}^0J(-x-\underline h(t)-y)\phi(y)dy-d\phi(-x-\underline h(t))+f(\phi(-x-\underline h(t))\Big]\\
&=&(1-\epsilon)\delta \Big[\phi'(x-\underline h(t))+\phi'(-x-\underline h(t))\Big]\\
&&+(1-\epsilon)\Big[d\int_{-\infty}^{\underline h(t)}J(x-y)\phi(y-\underline h(t))dy-d\phi(x-\underline h(t))\\
&& \ \ \ \ \ \ \ \ \ \ \ \ \ \    +
d\int_{-\underline h(t)}^\infty J(-x+y)\phi(-y-\underline h(t))dy-d\phi(-x-\underline h(t))\Big]\\
&&  +(1-\epsilon)\Big[f(\phi(x-\underline h(t))+f(\phi(-x-\underline h(t))\Big]\\
&=& (1-\epsilon)\delta \Big[\phi'(x-\underline h(t))+\phi'(-x-\underline h(t))\Big]\\
&&+d\int_{-\underline h(t)}^{\underline h(t)} J(x-y)\underline u(t,y)dy-d\underline u(t,x)\\
&&+(1-\epsilon)d\Big[\int_{-\infty}^{-\underline h(t)} J(x-y)[\phi(y-\underline h(t))-1]dy+\int_{\underline h(t)}^\infty J(x-y)[\phi(-y-\underline h(t))-1]dy\Big]\\
&&+(1-\epsilon)\Big[f(\phi(x-\underline h(t))+f(\phi(-x-\underline h(t))\Big]\\
&\leq & d\int_{-\underline h(t)}^{\underline h(t)} J(x-y)\underline u(t,y)dy-d\underline u(t,x)\\
&&+(1-\epsilon)\delta \Big[\phi'(x-\underline h(t))+\phi'(-x-\underline h(t))\Big]+(1-\epsilon)\Big[f(\phi(x-\underline h(t))+f(\phi(-x-\underline h(t))\Big]\\
&=&d\int_{-\underline h(t)}^{\underline h(t)} J(x-y)\underline u(t,y)dy-d\underline u(t,x) +f(\underline u(t,x))+ \delta(t,x)
\end{eqnarray*}
with
\begin{eqnarray*}
\delta(t,x):&=&(1-\epsilon)\delta \Big[\phi'(x-\underline h(t))+\phi'(-x-\underline h(t))\Big]\\
&&+(1-\epsilon)\Big[f(\phi(x-\underline h(t))+f(\phi(-x-\underline h(t))\Big]-f(\underline u(t,x)).
\end{eqnarray*}
To prove \eqref{u_t}, it suffices to show that
\begin{equation}\label{J<0}
\delta(t,x)\leq 0 \mbox{ for } x\in [-\underline h(t), \underline h(t)],\; t\geq 0.
\end{equation}

We start by checking the case $x\in [\underline h(t)-M, \underline h(t)] $ and $t\geq 0$. For such $x$ and $t$,  we have
\[
0>\phi(-x-\underline h(t))-1\geq \phi(-2\underline h(t)+M)-1\geq \phi(-2L+M)-1\geq -\hat \epsilon
\]
provided that $L$ is large enough, say $L\geq L_3\geq L_2$. It follows that
\[
f(\underline u(t,x))\geq f\big((1-\epsilon)\phi(x-\underline h(t))\big)-M_0(1-\epsilon)\hat \epsilon,
\]
\[
f\big(\phi(-x-\underline h(t))\big)=f\big(\phi(-x-\underline h(t))\big)-f(1)\leq M_0\hat\epsilon,
\]
and hence, by \eqref{ep0} and {\bf (f3)},
\begin{eqnarray*}
\delta (t,x)&\leq& -(1-\epsilon)\delta \epsilon_0+(1-\epsilon)\big[f\big(\phi(x-\underline h(t))\big)+M_0\hat\epsilon\big] \\
&&-f\big((1-\epsilon)\phi(x-\underline h(t))\big)+M_0(1-\epsilon)\hat\epsilon\\
&\leq&-(1-\epsilon)\delta \epsilon_0+2(1-\epsilon)M_0\hat\epsilon<0.
\end{eqnarray*}
Since $\delta(t,-x)=\delta(t,x)$, the above inequality also holds for $x\in [-\underline h(t), -\underline h(t)+M] $ and $t\geq 0$.

It remains to check the case $x\in [-\underline h(t)+M, \underline h(t)-M] $ and $t\geq 0$. Now \eqref{M-M} holds and so
\[
\underline u(t,x)\in [(1-\epsilon)(1-\tilde\epsilon), 1-\epsilon].
\]
Since $f(u)$ is decreasing for $u\in [(1-\epsilon)(1-\tilde\epsilon), 1-\epsilon]$, it follows that, for such $x$ and $t$,
\[
\delta (t,x)< (1-\epsilon)\big[f(1-\frac{\tilde\epsilon}2)+f(1-\frac{\tilde\epsilon}2)\big]-f(1-\epsilon)<0
\]
due to \eqref{tilde-ep}. Thus \eqref{J<0} holds. This proves \eqref{u_t}.

Since $J(-x)=J(x)$ and $\underline u(t,-x)=\underline u(t,x)$, from \eqref{h'2} we easily deduce
\begin{equation}\label{h'1}
-\underline h'(t)\geq - \mu\int_{-\underline h(t)}^{\underline h(t)}\int_{-\infty}^{-\underline h(t)} J(x-y)\underline u(t,x)dydx
\;\mbox{ for } t>0.
\end{equation}

We are now ready to compare $(u, g, h)$ with $(\underline u, -\underline h, \underline h)$ by a comparison argument.
Since spreading happens for $(u, g, h)$, there exists $T>0$ large enough such that
\[
g(T)<-L=-\underline h(0),\; h(T)>L=\underline h(0),\; u(T,x)>1-\epsilon>\underline u(0,x) \mbox{ for } x\in [-L, L].
\]
In view of \eqref{h'2},   \eqref{u_t} and \eqref{h'1}, we may now use the lower solution version of Theorem 3.1 in \cite{CDLL2019} to deduce
\[
g(T+t)\leq -\underline h(t),\; h(t+T)\geq \underline h(t) \mbox{ and } u(t+T, x)\geq \underline u(t,x) \mbox{ for } t>0,\; x\in
[-\underline h(t), \underline h(t)].
\]
In particular,
\[
\liminf_{t\to\infty}\frac{h(t)}{t}\geq \lim_{t\to\infty}\frac{\underline h(t-T)}{t}=c_0-\delta=(1-2\epsilon)c_0.
\]
Letting $\epsilon\to 0$ we obtain $\liminf_{t\to\infty}\frac{h(t)}{t}\geq c_0$. This completes the proof.
\end{proof}

\section{Accelerating spreading of \eqref{main-fb}}

In this section, we prove part (ii) of Theorem \ref{thm1}. So throughout this section, we assume that {\bf (f3)} and {\bf (J)} hold, but {\bf (J1)} is not satisfied. Moreover, we assume that  $(u,g,h)$ is the unique solution of \eqref{main-fb}, and spreading happens, namely
\[
\lim_{t\to\infty} h(t)=-\lim_{t\to\infty} g(t)=\infty,\; \lim_{t\to\infty} u(t,x)=1 \mbox{ locally uniformly for } x\in\mathbb R.
\]

We firstly choose a sequence $\{J_n(x)\}$ such that each $J_n(x)$ is nonnegative, continuous, even,   has nonempty compact support, and
\[
J_n(x)\leq J_{n+1}(x)\leq J(x) \mbox{ for all } n\geq 1,\; x\in\mathbb R,\;\;
J_n(x)\to J(x) \mbox{ in } L^1(\mathbb R).
\]
Such a sequence can be easily constructed by defining $J_n(x)=J(x)\xi_n(x)$, with $\xi_n(x)$ a suitable sequence of smooth cut-off functions. We then consider the auxiliary problem which is obtained by replacing $J$ by $J_n$ in \eqref{main-fb}, namely
\begin{equation}\label{main-n}
\begin{cases}
\displaystyle u_t=d\int_{g(t)}^{h(t)}J_n(x-y)u(t,y)dy-du(t,x)+f(u),
& t>0,~x\in(g(t),h(t)),\\
\displaystyle u(t,g(t))=u(t,h(t))=0, &t>0,\\
\displaystyle h'(t)=\mu\int_{g(t)}^{h(t)}\int_{h(t)}^{+\infty}
J_n(x-y)u(t,x)dydx, &t>0,\\
\displaystyle g'(t)=-\mu\int_{g(t)}^{h(t)}\int_{-\infty}^{g(t)}
J_n(x-y)u(t,x)dydx, &t>0,\\
\displaystyle u(0,x)=u_0(x),~h(0)=-g(0)=h_0,  &  x\in[-h_0,h_0],
\end{cases}
\end{equation}

\begin{lemma}\label{lem4.1}
For every large $n$, problem \eqref{main-n} has a unique positive solution $(u_n, g_n, h_n)$ defined for all $t>0$. Moreover,
\[
h(t)\geq h_{n+1}(t)\geq h_n(t),\; g(t)\leq g_{n+1}(t)\leq g_n(t) \mbox{ for all } t>0,\; n\geq 1.
\]
\end{lemma}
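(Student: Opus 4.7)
The plan is twofold: first to produce $(u_n,g_n,h_n)$ by invoking the existence theory of \cite{CDLL2019} applied to the regularized kernel $J_n$, and then to establish the ordering of the moving fronts by treating a solution with a larger kernel as a super-solution of the problem with a smaller kernel and appealing to the comparison principle already used in Section 3.

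For the existence and uniqueness, I would verify that for all sufficiently large $n$, $J_n=J\xi_n$ satisfies the structural hypotheses on the kernel required by the well-posedness and global existence argument of \cite{CDLL2019}. By construction $J_n$ is nonnegative, continuous, even, and essentially bounded; for $n$ large the support of $\xi_n$ contains a neighbourhood of $0$, so $J_n(0)=J(0)\xi_n(0)>0$. The unit-mass normalization $\int_{\R}J_n=1$ is a convention rather than a structural hypothesis in the existence proof of \cite{CDLL2019} (the key ingredients are continuity, boundedness, and positivity at $0$ of the kernel, together with {\bf (f1)}--{\bf (f2)}), so the argument there applies verbatim and produces a unique positive solution $(u_n,g_n,h_n)$ defined for all $t>0$.

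For the monotonicity, the key observation is that if $K_1\le K_2$ are two admissible nonnegative kernels and $(U,G,H)$ is a nonnegative solution of the free boundary problem with kernel $K_2$, then pointwise
\[
d\int_{G(t)}^{H(t)} K_2(x-y)U(t,y)\,dy\;\ge\; d\int_{G(t)}^{H(t)} K_1(x-y)U(t,y)\,dy,
\]
so that on $(G(t),H(t))$ one has $U_t\ge d\int_{G(t)}^{H(t)}K_1(x-y)U(t,y)\,dy-dU+f(U)$; and, at the moving fronts,
\[
H'(t)\;\ge\; \mu\int_{G(t)}^{H(t)}\!\int_{H(t)}^{+\infty}K_1(x-y)U(t,x)\,dy\,dx,
\]
\[
G'(t)\;\le\; -\mu\int_{G(t)}^{H(t)}\!\int_{-\infty}^{G(t)}K_1(x-y)U(t,x)\,dy\,dx.
\]
Thus $(U,G,H)$ is a super-solution to the problem with kernel $K_1$. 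Since the initial data and initial intervals coincide, the upper-solution version of the comparison principle of \cite{CDLL2019} (the same result invoked in the proof of Lemma \ref{<c0}) yields $H(t)\ge H_1(t)$ and $G(t)\le G_1(t)$, where $(U_1,G_1,H_1)$ denotes the solution with kernel $K_1$. Applying this observation twice, once with $(K_1,K_2)=(J_n,J_{n+1})$ and once with $(K_1,K_2)=(J_n,J)$, gives the asserted chain $h_n\le h_{n+1}\le h$ and $g_n\ge g_{n+1}\ge g$. The only delicate point is that the comparison principle must be used with a super-solution whose differential inequalities are written entirely in terms of the single kernel $K_1$; this is exactly what the above computation produces, so no argument beyond those already used in Section 3 is needed.
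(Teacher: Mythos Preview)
Your monotonicity argument is correct and is the mirror image of the paper's: you show that the solution with the \emph{larger} kernel is a super-solution of the problem with the \emph{smaller} kernel, whereas the paper shows that the solution with the smaller kernel is a lower solution of the problem with the larger kernel. Both lead to the same ordering via the comparison principle of \cite{CDLL2019}, so this part is fine.

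The existence argument, however, is handled differently. You assert that the unit-mass condition $\int_{\R}J=1$ in {\bf (J)} is ``a convention rather than a structural hypothesis'' in the well-posedness proof of \cite{CDLL2019}, and invoke that result directly for $J_n$. This is plausible but is an unproved claim about a result stated under a specific hypothesis. The paper avoids this issue by an explicit rescaling trick: setting $\sigma_n=\int_{\R}J_n$, $\tilde J_n=J_n/\sigma_n$, and $f_n(u)=f(u)-(1-\sigma_n)u$, one rewrites \eqref{main-n} in the equivalent form \eqref{main-n-tilde} with kernel $\tilde J_n$ (which satisfies {\bf (J)} exactly), diffusion coefficient $d\sigma_n$, and nonlinearity $f_n$. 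One then checks that $f_n$ satisfies {\bf (f1)}--{\bf (f2)} (and indeed a modified {\bf (f3)} with zero at some $\eta_n\in(0,1)$, $\eta_n\to1$), so the existence theorem of \cite{CDLL2019} applies verbatim. This costs a few lines but removes any need to revisit the proof in \cite{CDLL2019}, and the reformulation \eqref{main-n-tilde} is reused later (in Lemma \ref{lem4.2} and in deriving \eqref{g-h-n}) to invoke the spreading-vanishing theory and part (i) of Theorem \ref{thm1}, both of which are stated under {\bf (J)}. So while your shortcut may well be justifiable, the paper's rescaling is both safer and more useful downstream.
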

\begin{proof}
Define $\sigma_n:=\int_{\mathbb R} J_n(x) dx$. Then $\sigma_n\in (0, 1]$, is nondecreasing in $n$, and $\lim_{n\to\infty}\sigma_n=1$.
Set
\[
\tilde J_n(x):=\frac 1{\sigma_n}J_n(x).
\]
Clearly $\tilde J_n$ satisfies {\bf (J)}. Moreover, since $\tilde J_n$ has compact support, it also satisfies {\bf (J1)} (and  {\bf (J2)} as well). Set
\[
f_n(u):=f(u)-(1-\sigma_n)u.
\]
Then at least for all large $n$, $f_n$ satisfies {\bf (f3)} except that $f(1)=0>f'(1)$ should be replaced by
$f_n(\eta_n)=0>f'(\eta_n)$ for some uniquely determined $\eta_n\in (0,1)$, and $\lim_{n\to\infty}\eta_n=1$.

We may now rewrite \eqref{main-n} in the equivalent form
\begin{equation}\label{main-n-tilde}
\begin{cases}
\displaystyle u_t=d\sigma_n\int_{g(t)}^{h(t)}\tilde J_n(x-y)u(t,y)dy-d\sigma_n u(t,x)+f_n(u),
& t>0,~x\in(g(t),h(t)),\\
\displaystyle u(t,g(t))=u(t,h(t))=0, &t>0,\\
\displaystyle h'(t)=\mu\sigma_n \int_{g(t)}^{h(t)}\int_{h(t)}^{+\infty}
\tilde J_n(x-y)u(t,x)dydx, &t>0,\\
\displaystyle g'(t)=-\mu\sigma_n \int_{g(t)}^{h(t)}\int_{-\infty}^{g(t)}
\tilde J_n(x-y)u(t,x)dydx, &t>0,\\
\displaystyle u(0,x)=u_0(x),~h(0)=-g(0)=h_0,  &  x\in[-h_0,h_0].
\end{cases}
\end{equation}
By \cite{CDLL2019}, we know that, for every large $n$, \eqref{main-n-tilde} has a unique solution $(u_n, g_n, h_n)$ which is defined for all $t>0$.

Since $(u_n, g_n, h_n)$ satisfies \eqref{main-n}, it follows from $J_n(x)\leq J(x)$ that $(u_n, g_n, h_n)$ is a lower solution of
\eqref{main-fb}, and hence we can use the comparison principle in \cite{CDLL2019} to conclude that
\[
g_n(t)\geq g(t),\; h(t)\geq h_n(t) \mbox{ for all } t>0.
\]
Similarly from $J_n\leq J_{n+1}$ we deduce
\[
g_n(t)\geq g_{n+1}(t),\; h_{n+1}(t)\geq h_n(t) \mbox{ for all } t>0 \mbox{ and every large } n.
\]
The proof is complete.
\end{proof}
\begin{lemma}\label{lem4.2}
For every large $n$, spreading happens to $(u_n, g_n, h_n)$.
\end{lemma}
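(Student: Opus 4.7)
The plan is to show that, for $n$ large, the solution $(u_n, g_n, h_n)$ of \eqref{main-n-tilde} reaches, at some finite time $T$, a configuration that triggers the sufficient condition for spreading from \cite{CDLL2019} applied to the $n$-th problem. Since $\tilde J_n$ has compact support, both \textbf{(J1)} and \textbf{(J2)} hold for \eqref{main-n-tilde}, so the full spreading-vanishing framework of \cite{CDLL2019} is available for the approximating problems. As input, since spreading holds for $(u, g, h)$, for any prescribed $L > 0$ we may fix $T = T(L)$ with $[-L, L] \subset [g(T), h(T)]$ and $u(T, x) \geq 3/4$ on $[-L, L]$.

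The first main step is continuous dependence on the kernel at finite times: for fixed $T > 0$, $(u_n, g_n, h_n) \to (u, g, h)$ on $[0, T]$ as $n \to \infty$. By Lemma~\ref{lem4.1}, $h_n(T) \nearrow \tilde h(T) \leq h(T)$ and $g_n(T) \searrow \tilde g(T) \geq g(T)$, while $u_n \nearrow \tilde u$ on the common domain. Passing to the limit in the mild (integral) formulation of \eqref{main-n-tilde}, using $\|J_n - J\|_{L^1(\mathbb R)} \to 0$, $\sigma_n \to 1$, $f_n \to f$ uniformly on $[0, 1]$, and dominated convergence, one verifies that $(\tilde u, \tilde g, \tilde h)$ satisfies \eqref{main-fb} on $[0, T]$ with the original initial data. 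The uniqueness for \eqref{main-fb} from \cite{CDLL2019} then forces $(\tilde u, \tilde g, \tilde h) \equiv (u, g, h)$ on $[0, T]$, whence, for all $n$ sufficiently large, $[-L, L] \subset [g_n(T), h_n(T)]$ and $u_n(T, x) \geq 1/2$ on $[-L, L]$.

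The second main step is a uniform-in-$n$ spreading criterion for \eqref{main-n-tilde}. I express this criterion in terms of the principal eigenvalue $\lambda_1^n(L)$ of the linearized Dirichlet-type operator
\[
w \ \longmapsto \ d\sigma_n \int_{-L}^{L} \tilde J_n(x-y)\,w(y)\,dy - d\sigma_n w(x) + f_n'(0)\,w(x)
\]
on $C([-L, L])$: if $\lambda_1^n(L) > 0$ and the solution is strictly positive on $[-L, L]$ at some time $T$, then spreading occurs for $(u_n, g_n, h_n)$. Using $\|\tilde J_n - J\|_{L^1} \to 0$ and $f_n'(0) \to f'(0)$, a standard perturbation argument yields $\lambda_1^n(L) \to \lambda_1(L)$, the analogous eigenvalue for the original problem; since $\lambda_1(L) \to f'(0) > 0$ as $L \to \infty$, I fix $L = L_0$ with $\lambda_1(L_0) > 0$, so that $\lambda_1^n(L_0) > 0$ for all large $n$. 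Taking $T = T(L_0)$ from the first step then yields spreading for $(u_n, g_n, h_n)$ for every sufficiently large $n$. The main obstacle is precisely this uniform-in-$n$ criterion: it hinges on the stability of the nonlocal principal eigenvalue under $L^1$-convergence of the kernel together with convergence of the nonlinearity on $[0, 1]$, and is where the compact support of each $\tilde J_n$, placing the $n$-th problem squarely inside the \cite{CDLL2019} framework, is genuinely used.
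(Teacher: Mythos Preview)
Your proposal is correct and follows essentially the same route as the paper: establish a uniform-in-$n$ spreading criterion via convergence of the principal eigenvalue of the linearized nonlocal operator on a large interval, and then use continuous dependence of the solution on the data $(d, J_n, f_n) \to (d, J, f)$ at a fixed finite time $T$ to ensure the criterion is met for all large $n$. The only minor differences are presentational: the paper invokes the criterion from \cite{CDLL2019} in the simpler form ``$h_n(T)-g_n(T)>2\ell_0$'' (interval size alone, without requiring a lower bound on $u_n$), and it obtains continuous dependence by a direct appeal to uniqueness rather than via the monotone-limit-plus-mild-formulation argument you sketch.
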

\begin{proof}
 For any fixed constant $\ell>0$, let $\mathcal L_{\ell}$ denote the operator defined by
\[
\mathcal L_{\ell}\,[\phi](x):=d\int_{-\ell}^{\ell}J(x-y)\phi(y)dy-d\phi(x),
\]
and let $\lambda_p(\mathcal L_{\ell}+a)$ denote the principal eigenvalue of $\mathcal L_{\ell}+a$ given by
\[
\lambda_p(\mathcal L_{\ell}+a):=\inf\Big\{\lambda\in\mathbb R: \mathcal L_{\ell}\,[\phi]+a\phi \leq \lambda \phi \mbox{ in } (-\ell,\ell)
\mbox{ for some } \phi\in C([-\ell,\ell]),\; \phi>0\Big\}.
\]
By Proposition 3.4 in \cite{CDLL2019} we have
\[
\lim_{\ell\to\infty} \lambda_p(\mathcal L_{\ell}+f'(0))=f'(0)>0.
\]
Therefore we can find $\ell_0>0$ large enough such that
\[
 \lambda_p(\mathcal L_{\ell_0}+f'(0))>f'(0)/2.
\]
Define
\[
\mathcal L^n_{\ell}\,[\phi](x):=d\int_{-\ell}^{\ell}J_n(x-y)\phi(y)dy-d\phi(x),
\]
\[
\widetilde {\mathcal L}^n_{\ell}\,[\phi](x):=d\sigma_n \int_{-\ell}^{\ell}\tilde J_n(x-y)\phi(y)dy-d\sigma_n\phi(x).
\]
Then it is readily checked that
\[
\lambda_p(\widetilde {\mathcal L}^n_{\ell_0}+f_n'(0))=\lambda_p({\mathcal L}^n_{\ell_0}+f'(0)+(d-1)(1-\sigma_n))\to \lambda_p(\mathcal L_{\ell_0}+f'(0))>f'(0)/2
\]
as $n\to\infty$. Therefore we can find $n_0>0$ large enough so that
\[
\lambda_p(\widetilde {\mathcal L}^n_{\ell_0}+f_n'(0))>f'(0)/4
\mbox{ for all } n\geq n_0.
\]
Using this fact we see from the proof of Theorem 1.3 in \cite{CDLL2019} that spreading happens to \eqref{main-n-tilde}
with $n\geq n_0$, provided that there exists $T\geq 0$ such that
\begin{equation}\label{2ell0}
h_n(T)-g_n(T)>2\ell_0.
\end{equation}

We show next that there exists $T>0$ large so that \eqref{2ell0} holds for all large $n$, say $n\geq n_1\geq n_0$.
Indeed, since spreading happens to \eqref{main-fb} by assumption, we can find $T>0$ large enough such that
$h(T)-g(T)>4\ell_0$.

Since $(d\sigma_n, J_n, f_n)\to (d, J, f)$ as $n\to\infty$ in the obvious sense, by the continuous dependence of the solution of \eqref{main-fb} on $(d, J, f)$ (which follows easily from the uniqueness of the solution), we see that, as $n\to\infty$, the solution $(u_n, g_n, h_n)$ of \eqref{main-n-tilde} converges to the solution $(u, g, h)$ of \eqref{main-fb} over any bounded time interval; in particular,
\[
g_n(t)\to g(t),\; \; h_n(t)\to h(t) \mbox{ uniformly for } t\in [0, T].
\]
It follows that
\[
h_n(T)-g_n(T)>\frac 12 [h(T)-g(T)]>2\ell_0 \mbox{ for all large } n.
\]
This proves \eqref{2ell0} and hence spreading happens to $(u_n, g_n, h_n)$ for every large $n$.
\end{proof}

Since each $\tilde J_n$ satisfies {\bf (J)} and {\bf (J1)}, we are in a position to apply part (i) of Theorem \ref{thm1} to \eqref{main-n-tilde} to conclude that
\begin{equation}\label{g-h-n}
\lim_{t\to\infty}\frac{h_n(t)}{t}=\lim_{t\to\infty}\frac{-g_n(t)}{t}=c_n
\end{equation}
for every large $n$, and $c_n>0$ is determined by the following two equations
\begin{equation}\label{semi-wave-n}
\begin{cases}
\displaystyle d \int_{-\infty}^0 J_n(x-y) \phi(y) dy - d \phi(x)+ c_n\phi'(x) + f(\phi(x)) =0, &  -\infty < x< 0,\\
\displaystyle \phi(-\infty) = \eta_n,\ \ \phi(0) =0,
\end{cases}
\end{equation}
and
\begin{equation}\label{fby-n}
c_n=\mu\int_{-\infty}^{0}\int_{0}^{+\infty}J_n(x-y)\phi(x)dydx,
\end{equation}
namely $c_n>0$ is the unique value such that \eqref{semi-wave-n} and \eqref{fby-n} have a solution $\phi=\phi_n\in C^1((-\infty, 0])$ which is strictly decreasing in $x$.
 This last fact is a consequence of Theorem \ref{thm2} applied to \eqref{main-n-tilde}, but with the corresponding equations of \eqref{semi-wave} and \eqref{fby-0}, which now involve $(d\sigma_n, \tilde J_n, f_n)$,
rewritten in terms of $(d, J_n, f)$, much as in the equivalent form \eqref{main-n} of \eqref{main-n-tilde}.

By Lemma \ref{lem4.1} and \eqref{g-h-n}, we have $c_n\leq c_{n+1}$ and
\[
\liminf_{t\to\infty}\frac{h(t)}{t}\geq c_n,\; \liminf_{t\to\infty}\frac{-g(t)}{t}\geq c_n \mbox{ for all large } n.
\]
Therefore part (ii) of Theorem \ref{thm1} is a consequence of the following lemma.

\begin{lemma}\label{lem4.3}
 $\lim_{n\to\infty} c_n=\infty$.
\end{lemma}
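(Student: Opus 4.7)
The plan is by contradiction: assume $\{c_n\}$ is bounded. Since $c_n$ is nondecreasing (as noted just before the lemma) and positive for $n$ large (by Lemmas \ref{lem4.1}--\ref{lem4.2}), $c_n \uparrow c_\infty \in (0,+\infty)$. From \eqref{semi-wave-n}, the bounds $0\le \phi_n\le 1$ and $c_n\ge c_{n_1}>0$ make $\phi_n'$ uniformly bounded; Arzel\`a--Ascoli with a diagonal argument then extracts a subsequence $\phi_n\to\phi_\infty$ in $C_{\mathrm{loc}}((-\infty,0])$ with $\phi_\infty$ nonincreasing and $\phi_\infty(0)=0$. Passing to the limit in \eqref{semi-wave-n} by dominated convergence ($J_n\le J\in L^1$, $\phi_n\le 1$) yields
\[
d\int_{-\infty}^0 J(x-y)\phi_\infty(y)\,dy - d\phi_\infty(x) + c_\infty\phi_\infty'(x) + f(\phi_\infty(x)) = 0,\quad x<0.
\]
Set $L:=\phi_\infty(-\infty)\in[0,1]$; the goal is to derive a contradiction for each possible value of $L$.

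Two cases are easy. If $L\in(0,1)$, sending $x\to-\infty$ in the limit equation gives $\int_{-\infty}^0 J(x-y)\phi_\infty(y)\,dy\to L$ by dominated convergence, so $c_\infty\phi_\infty'(x)\to -f(L)$; boundedness of $\phi_\infty$ forces this limit to be $0$, hence $f(L)=0$, contradicting \textbf{(f3)}. If $L=1$, Fatou's lemma applied to \eqref{fby-n} gives
\[
c_\infty \ge \mu\int_{-\infty}^0 a(x)\phi_\infty(x)\,dx,\qquad a(x):=\int_0^\infty J(x-y)\,dy,
\]
and with $\phi_\infty\ge 1/2$ on $(-\infty,-M]$ for $M$ sufficiently large, the failure of \textbf{(J1)} makes $\int_{-\infty}^{-M}a(x)\,dx=+\infty$, so $c_\infty=+\infty$, contradiction.

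The delicate case is $L=0$, where $\phi_\infty\equiv 0$ and the arguments above collapse. The plan is to translate to capture the transition region: pick $x_n<0$ with $\phi_n(x_n)=1/2$; local uniform convergence $\phi_n\to 0$ forces $x_n\to-\infty$. Setting $\tilde\phi_n(x):=\phi_n(x+x_n)$ on $(-\infty,-x_n]$ and repeating the compactness argument (with $-x_n\to+\infty$) produces a subsequential limit $\tilde\phi_\infty\in C^1(\mathbb R)$, nonincreasing, with $\tilde\phi_\infty(0)=1/2$, satisfying the full-line equation
\[
d\int_{-\infty}^{\infty} J(x-y)\tilde\phi_\infty(y)\,dy - d\tilde\phi_\infty(x) + c_\infty\tilde\phi_\infty'(x) + f(\tilde\phi_\infty(x))=0,\quad x\in\mathbb R.
\]
Repeating the $L\in(0,1)$ reasoning at both $\pm\infty$ forces $\tilde\phi_\infty(\pm\infty)$ to be zeros of $f$, hence $1$ and $0$ respectively; so $\tilde\phi_\infty$ is a traveling wave for the Cauchy problem \eqref{Cauchy} with speed $c_\infty$ and kernel $J$. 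By Proposition \ref{tw} such a wave requires \textbf{(J2)}, but \textbf{(J2)} implies \textbf{(J1)} (shown in the introduction), contradicting our standing hypothesis. This case---the construction of a full-line traveling wave from the shifted semi-wave profiles---is the main obstacle in the proof.
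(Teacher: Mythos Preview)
Your argument is correct and follows essentially the same strategy as the paper's proof. The paper is terser: it invokes the dichotomy from the proof of Theorem~\ref{thm-semiwave-existence} (shift so the profile equals $1/2$ at the origin; then either the shift is unbounded and one obtains a full-line traveling wave contradicting Proposition~\ref{tw}, or it stays bounded and one obtains a semi-wave pair contradicting Theorem~\ref{thm2}), whereas you take the unshifted limit first, split on $L=\phi_\infty(-\infty)$, and only shift in the $L=0$ case---your $L=1$ case simply inlines the key step of Lemma~\ref{Ji-nec} via Fatou instead of citing Theorem~\ref{thm2}.
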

\begin{proof}
Arguing indirectly we assume that the conclusion of the lemma does not hold. Then the nondecreasing positive sequence $c_n$
must converge to some positive constant $c_\infty$ as $n\to\infty$. We show next that this leads to a contradiction.

We note that \eqref{semi-wave-n} and \eqref{fby-n} have a solution $\phi_n$ which is strictly decreasing. So we are in a position to argue as in the proof of Theorem \ref{thm-semiwave-existence} (with simple minor changes) to conclude that, either \eqref{tw-cauchy} has a nonincreasing solution $\phi(x)$ for $c=c_\infty$, or \eqref{semi-wave} and \eqref{fby-0} have a solution pair $(c,\phi)$ with $c=c_\infty$ and $\phi(x)$ nonincreasing in $x$. Since $J$ does not satisfy {\bf (J1)} and hence also does not satisfy {\bf (J2)}, we have a contradiction to either Proposition \ref{tw} or Theorem \ref{thm2}. This completes the proof.
\end{proof}

\section{Limiting profile as $\mu\rightarrow+\infty$}

In this section, we  discuss the convergence of the semi-wave pair $(c,\phi)$ when $\mu\rightarrow +\infty$ under the conditions {\bf (J)}, {\bf (J1)} and {\bf (f3)}. We will also show that \eqref{Cauchy} can be viewed as the limiting problem of \eqref{main-fb} as $\mu\to+\infty$. For this latter conclusion, we
only require {\bf (J)} and {\bf (f1)-(f2)}.

 It is obvious that the semi-wave pair $(c, \phi)$ depends on $\mu$. To stress this dependence, we will denote it by $(c_\mu, \phi_\mu)$ throughout this section.
 For each fixed $\mu>0$, we know that $\phi_\mu(x)$ is strictly decreasing in $(-\infty, 0]$, and hence there exists a unique $l_\mu>0$ such that
 $
 \phi_\mu(-l_\mu)=\frac 12.
 $
Define
\[
\hat \phi_\mu(x):=\phi_\mu(x-l_\mu) \mbox{ for } x\leq l_\mu, \mbox{ and so } \hat\phi_\mu(0)=\frac 12.
\]
Let us also define $\phi_\mu(x)=0$ for $x>0$.

 Firstly, we consider the case that condition {\bf (J2)} holds.

\begin{theorem}\label{thm5.1}
Suppose that {\bf (J)}, {\bf (J2)} and {\bf (f3)} are satisfied. Then, as $\mu\rightarrow +\infty$,
$$c_\mu\rightarrow c_*, \;\;   l_\mu\rightarrow +\infty, \; \phi_\mu(x)\to 0 \mbox{ and }\hat \phi_\mu(x)\rightarrow\phi_*(x) \mbox{ locally uniformly in $\mathbb{R}$, } $$
where $(c_*,\phi_*)$ is the minimal speed solution pair of \eqref{tw-cauchy} with $\phi_*(0)=1/2$.
\end{theorem}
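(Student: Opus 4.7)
The plan is to reduce Theorem \ref{thm5.1} to the large-$\mu$ limit of the speed equation \eqref{c-mu}, and then to pass to the limit in a suitably shifted semi-wave equation. The key tool is the implicit characterization $c_\mu = \mu\, M(c_\mu)$ with $M(c):=\int_{-\infty}^0\int_0^{+\infty} J(x-y)\phi^c(x)\,dy\,dx$, together with the strict monotonicity and continuity of $M$ on $(0,c_*)$ established in Theorems \ref{thm-semiwave-uniqueness}--\ref{thm-speed}.

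I will proceed in four steps. First, strict monotonicity of $M$ in $c$ immediately gives that $\mu\mapsto c_\mu$ is strictly increasing, and since $c_\mu\in(0,c_*)$, the limit $c_\infty:=\lim_{\mu\to\infty}c_\mu\in(0,c_*]$ exists. Second, I argue by contradiction that $c_\infty=c_*$: if $c_\infty<c_*$, continuous dependence of $\phi^c$ on $c$ (established inside the proof of Theorem \ref{thm-speed}) and dominated convergence (applicable since {\bf (J2)} implies {\bf (J1)}) yield $M(c_\mu)\to M(c_\infty)>0$, so $c_\mu=\mu M(c_\mu)\to+\infty$, a contradiction. Third, \eqref{to-c*} in Theorem \ref{thm-speed} gives $\phi_\mu\to 0$ locally uniformly on $(-\infty,0]$, and combined with $\phi_\mu(-l_\mu)=1/2$ this forces $l_\mu\to+\infty$. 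Fourth, I shift: $\hat\phi_\mu(x)=\phi_\mu(x-l_\mu)$ is nonincreasing, uniformly bounded in $[0,1]$, and satisfies a shifted version of \eqref{semi-wave} on $(-\infty,l_\mu)$ from which $\hat\phi_\mu'$ is uniformly bounded on any compact set; Arzel\`a--Ascoli plus a diagonal extraction produce a subsequence converging locally uniformly in $\mathbb R$ to a nonincreasing $\phi_\infty\in C(\mathbb R)$ with $\phi_\infty(0)=1/2$. Passing to the limit in the shifted equation (using $l_\mu\to+\infty$, $c_\mu\to c_*$, and dominated convergence), $\phi_\infty$ solves the traveling wave equation \eqref{tw-cauchy} with $c=c_*$. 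A standard argument on monotone bounded solutions of \eqref{tw-cauchy} forces the limits $\phi_\infty(\pm\infty)$ to be zeros of $f$, so the normalization $\phi_\infty(0)=1/2$ gives $\phi_\infty(-\infty)=1$ and $\phi_\infty(+\infty)=0$; thus $\phi_\infty$ is a minimal-speed traveling wave normalized at the origin.

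The main obstacle is to conclude that \emph{every} such subsequential limit equals the same $\phi_*$, which reduces to the uniqueness (up to translation) of the minimal-speed traveling wave of \eqref{Cauchy}. This is a classical property for nonlocal Fisher--KPP equations and can be proved directly by a sliding/comparison argument mirroring the one used in Section 2.3 for the semi-wave: given two minimal-speed waves $\phi$ and $\tilde\phi$ with $\phi(0)=\tilde\phi(0)=1/2$, one sets $k^*=\inf\{k\geq 1:k\phi(x)\geq\tilde\phi(x)\text{ for all }x\in\mathbb R\}$ and applies the strong maximum principle, Lemma \ref{lm-strongMP} (suitably extended to the full line), to $w^*=k^*\phi-\tilde\phi$ to force $k^*=1$. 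Once uniqueness is in hand, independence of the subsequential limit upgrades the subsequence convergence to convergence of the full family $\hat\phi_\mu\to\phi_*$ locally uniformly, and all assertions of the theorem follow.
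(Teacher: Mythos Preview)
Your proof is correct and in some respects cleaner than the paper's. The key difference lies in how you establish $c_\mu\to c_*$ and $\phi_\mu\to 0$. The paper first proves $c_\infty\le c_*$ by a PDE comparison: it compares the free boundary solution $(u_\mu,g_\mu,h_\mu)$ with the Cauchy problem \eqref{Cauchy} and invokes the spreading speed result \eqref{c*} for the latter; it then obtains $l_\mu\to\infty$ from the identity $\int_{-\infty}^0\int_0^\infty J(x-y)\phi_\mu(x)\,dy\,dx=c_\mu/\mu\le c_*/\mu\to 0$, treating the cases of compactly and non-compactly supported $J$ separately, and only afterward concludes $c_\infty\ge c_*$ from the existence of a limiting traveling wave at speed $c_\infty$. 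You bypass all of this by working purely with the semi-wave theory of Section~2: since $c_\mu\in(0,c_*)$ by Theorem~\ref{thm-speed} and $M$ is continuous and strictly positive on $(0,c_*)$, the assumption $c_\infty<c_*$ forces $c_\mu=\mu M(c_\mu)\to\infty$, a contradiction; and then $\phi_\mu\to 0$ and $l_\mu\to\infty$ follow immediately from \eqref{to-c*}. Your route avoids both the appeal to \eqref{c*} and the case analysis on the support of $J$.

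The compactness step and the passage to the limiting traveling wave equation are essentially the same in both arguments. One caveat: your sketched uniqueness argument for the minimal-speed wave via $k^*=\inf\{k\ge 1:k\phi\ge\tilde\phi\}$ is more delicate on the full line than in the half-line setting of Section~2.3, because the touching point can escape to $+\infty$ where both waves decay to zero (so the ratio $\tilde\phi/\phi$ need not be bounded a priori). The paper also invokes this uniqueness without proof, so you are on equal footing; but if you wish to make this step self-contained you will need sharper control of the tail behaviour (e.g.\ matching exponential decay rates at the minimal speed) rather than a direct transplant of the Section~2.3 argument.
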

\begin{proof}
Choose $u_0$ such that spreading happens to \eqref{main-fb}, and let $(u_\mu, g_\mu, h_\mu)$ denote the unique solution of \eqref{main-fb}. Let $u$ denote the unique solution of \eqref{Cauchy} with the same initial function $u_0$ (extended by 0 outside  $[-h_0, h_0]$). Then the comparison principle infers that
$u_\mu(t,x)\leq u(t,x)$ for $t>0,\; x\in [g (t), h(t)]$. From the proof of Lemma \ref{>c0} we see that $u_\mu(t,x)\geq \underline u(t-T, x)$ for $x\in [-\underline h(t-T), \underline h(t-T)]$ for all $t>T$. Therefore
\[\mbox{
$u(t,x)\geq \underline u(t-T, x)$ for $x\in [-\underline h(t-T), \underline h(t-T)]$ for all $t>T$.}
\]
From this and \eqref{c*} we easily deduce $c_\mu\leq c_*$ for all $\mu>0$. By the comparison principle, $h_\mu(t)$ is increasing in $\mu$, which implies that $c_\mu$ is nondecreasing in $\mu$. Therefore
\[
c_\infty:=\lim_{\mu\to+\infty} c_\mu \mbox{ exists, and } c_\infty\leq c_*.
\]

We are now ready to show that $\lim_{\mu\to+\infty} l_\mu=+\infty$. Indeed,
since
\begin{equation}\label{5.1}
0\leq \int_{-\infty}^0\int_0^{+\infty} J(x-y)\phi_\mu(x)dydx=\frac{c_\mu}{\mu}\leq \frac{c_*}\mu,
\end{equation}
and $\phi_\mu(x)$ is strictly decreasing in $x$, in the case that $J$ does not have compact support,
we must have $\lim_{\mu\to+\infty}\phi_\mu(x)\to 0$  locally uniformly in $(-\infty, 0]$, which immediately implies  $l_\mu\to+\infty$.
If $J$ has compact support, and $L:=\inf\{x>0: J(x)=0\}$, then \eqref{5.1} implies
\begin{equation}\label{5.2}
\lim_{\mu\to+\infty} \phi_\mu(x)=0 \mbox{ uniformly for } x\in [-L, 0].
\end{equation}
We show that in this case we also have $\lim_{\mu\to+\infty}\phi_\mu(x)\to 0$  locally uniformly in $(-\infty, 0]$. Indeed, from the equation \eqref{semi-wave} and the monotonicity of $c_\mu>0$, it is easily seen that $\phi_\mu'$ is  uniformly bounded for $\mu>1$ and $x\in (-\infty, 0]$. Therefore,
for any sequence $\mu_n\to+\infty$, $\phi_{\mu_n}$ has a subsequence, still denoted by itself for convenience of notation, such that $\phi_{\mu_n}$ converges to some $\phi_\infty$ locally uniformly in $(-\infty, 0]$. Moreover,  $\phi_\infty(x)$ is nonincreasing in $x$, is $C^1$ for $x\leq 0$, and satisfies
\begin{equation}\label{abc}
d \int_{-\infty}^0 J(x-y) \phi_\infty(y) dy- d\phi_\infty(x) + c_\infty \phi_\infty'(x) + f(\phi_\infty(x)) =0 \mbox{ for } x\leq 0.
\end{equation}
It suffices to show that $\phi_\infty\equiv 0$. From \eqref{5.2} we have $\phi_\infty(x)=0$ for $x\in [-L, 0]$. If $\phi_\infty\not\equiv 0$,
then by its monotonicity there exists $L_0\leq -L$ such that
\[
\phi_\infty(x)=0 \mbox{ in } [L_0, 0],\; \phi_\infty(x)>0 \mbox{ in } (-\infty, L_0).
\]
 It follows that $\phi_\infty'(L_0)=0$.
On the other hand, from \eqref{abc} and the definition of $L_0$ we also have
\[
c_\infty \phi_\infty'(L_0)=-d\int_{-\infty}^0J(L_0-y)\phi_\infty(y)dy<0.
\]
This contradiction shows that we must have $\phi_\infty\equiv 0$ and hence we always have $\lim\limits_{\mu\to+\infty}l_\mu=+\infty$.

We may now use the equation \eqref{semi-wave} and the monotonicity of $c_\mu>0$ to see that $\hat\phi_\mu'$ is  uniformly bounded for $\mu>1$ and $x\in (-\infty, l_\mu]$. Repeating the argument of the last paragraph we can conclude that, for any sequence $\mu_n\to+\infty$, $\hat\phi_{\mu_n}$ has a subsequence, still denoted by itself, such that $\hat\phi_{\mu_n}$ converges to some $\hat\phi_\infty$ locally uniformly in $\mathbb R$, and  $\hat\phi_\infty(x)$ is nonincreasing in $x$, $\hat \phi_\infty(0)=\frac 12$, and
\begin{equation}\label{abc1}
d \int_{\mathbb R} J(x-y) \hat\phi_\infty(y) dy- d\hat\phi_\infty(x) + c_\infty \hat\phi_\infty'(x) + f(\hat\phi_\infty(x)) =0 \mbox{ for } x\in\mathbb R.
\end{equation}
Obviously, $\hat\phi_\infty\not\equiv 1/2$, for otherwise $\hat\phi_\infty$ does not satisfy \eqref{abc1}. Since $0$ and $1$ are the only nonnegative zeros of $f$ under {\bf (f3)}, we necessarily  have $\hat\phi_\infty(-\infty)=1$ and $\hat\phi_\infty(+\infty)=0$. The strong maximum principle then infers $\hat\phi_\infty>0$ in $\mathbb R$ (for example, we may apply Lemma \ref{lm-strongMP} with $x<0$ replaced by $x<l$ for any $l>0$).
 Therefore, $(c_\infty,\hat\phi_\infty)$ is a solution of \eqref{tw-cauchy}. Since $c_*$ is the minimal speed of \eqref{tw-cauchy} and $c_\infty\leq c_*$, we  necessarily have $c_*=c_\infty$ and $\phi_*=\hat\phi_\infty$. The uniqueness of $\phi_*$ implies that $\hat\phi_{\mu}$ converges to $\phi_*$ locally uniformly in $\mathbb R$ as $\mu\to +\infty$.
\end{proof}

Next, let us consider the case {\bf (J1)} holds but {\bf (J2)} does not.
\begin{theorem}\label{thm5.2}
Suppose that {\bf (J)}, {\bf (J1)} and {\bf (f3)} are satisfied but {\bf (J2)} does not hold. Then $c_\mu\rightarrow +\infty$ as $\mu\rightarrow +\infty$.

\end{theorem}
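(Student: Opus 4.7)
The plan is to argue by contradiction. In the proof of Theorem \ref{thm5.1} it was shown that $h_\mu(t)$ is nondecreasing in $\mu$, and combined with Theorem \ref{thm1}(i) (applicable because \textbf{(J1)} is assumed) this forces $c_\mu$ to be nondecreasing in $\mu$. So if the conclusion fails, $c_\mu\nearrow c_\infty\in(0,+\infty)$. I will derive a contradiction by extracting locally uniform limits of suitably shifted $\phi_\mu$. Let $l_\mu>0$ be the unique point with $\phi_\mu(-l_\mu)=1/2$. The boundedness of $c_\mu$ combined with the semi-wave equation \eqref{semi-wave} yields a uniform bound on $\phi_\mu'$ on $(-\infty,0]$, so Arzel\`a--Ascoli and a standard diagonal extraction are available. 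The argument splits according to the behavior of $l_\mu$.

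\textbf{Case A:} $l_{\mu_n}\to+\infty$ along some $\mu_n\to\infty$. Put $\hat\phi_n(x):=\phi_{\mu_n}(x-l_{\mu_n})$, defined for $x<l_{\mu_n}$. After passing to a further subsequence, $\hat\phi_n\to\hat\phi_\infty$ locally uniformly in $\mathbb R$, with $\hat\phi_\infty$ nonincreasing and $\hat\phi_\infty(0)=1/2$. Passing to the limit in the shifted equation (the integration domain $(-\infty,l_{\mu_n})$ expands to $\mathbb R$ by dominated convergence) shows that $\hat\phi_\infty$ satisfies the full traveling wave equation in \eqref{tw-cauchy} at speed $c_\infty$. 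Since $0$ and $1$ are the only zeros of $f$ on $[0,1]$, a standard argument using monotonicity and the equation at $x\to\pm\infty$ forces $\hat\phi_\infty(-\infty)=1$ and $\hat\phi_\infty(+\infty)=0$. Thus $(c_\infty,\hat\phi_\infty)$ is a monotone traveling wave of \eqref{Cauchy}, contradicting Proposition \ref{tw} because \textbf{(J2)} fails.

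\textbf{Case B:} $\limsup_{\mu\to\infty}l_\mu<\infty$. Along a subsequence $\mu_n\to\infty$, $l_{\mu_n}\to l_\infty\in[0,\infty)$ and $\phi_{\mu_n}\to\phi_\infty$ locally uniformly on $(-\infty,0]$, with $\phi_\infty$ nonincreasing and $\phi_\infty(-l_\infty)=1/2$, so $\phi_\infty(x)\geq 1/2$ for $x\leq -l_\infty$. The integrand in the identity
\begin{equation*}
\frac{c_{\mu_n}}{\mu_n}=\int_{-\infty}^0\!\int_0^{+\infty}\! J(x-y)\,\phi_{\mu_n}(x)\,dy\,dx
\end{equation*}
is dominated by $J(x-y)$, whose double integral over $(-\infty,0)\times(0,+\infty)$ is finite by \textbf{(J1)}. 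Dominated convergence together with $c_{\mu_n}/\mu_n\to 0$ then gives, with $a(x):=\int_0^{+\infty} J(x-y)\,dy=\int_{-\infty}^{x} J(z)\,dz$,
\begin{equation*}
0=\int_{-\infty}^0\!\int_0^{+\infty}\! J(x-y)\,\phi_\infty(x)\,dy\,dx\;\geq\;\tfrac12\int_{-\infty}^{-l_\infty} a(x)\,dx.
\end{equation*}
Since \textbf{(J2)} fails, $J$ does not have compact support; combined with the evenness of $J$, continuity, and $J(0)>0$, one checks that $a(x)>0$ for every $x\in\mathbb R$. Hence the right-hand side above is strictly positive, a contradiction.

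The main obstacle is ensuring that the dichotomy on $l_\mu$ is exploited correctly: in Case A one must confirm that the limit $\hat\phi_\infty$ is a genuine monotone, non-constant traveling wave so that Proposition \ref{tw} applies, and in Case B one must verify that the free-boundary identity \eqref{fby-0}, together with \textbf{(J1)}-based integrability and the strict positivity of $a(x)$ forced by the failure of \textbf{(J2)}, is incompatible with a nontrivial monotone limit $\phi_\infty$.
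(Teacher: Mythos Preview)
Your proof is correct and follows essentially the same route as the paper. The paper's proof simply invokes the argument of Theorem~\ref{thm5.1}: since \textbf{(J2)} fails, $J$ has non-compact support, so the free-boundary identity forces $l_\mu\to+\infty$, after which one extracts a monotone traveling wave $(c_\infty,\hat\phi_\infty)$ contradicting Proposition~\ref{tw}. Your Case~B is precisely this $l_\mu\to+\infty$ step argued by contradiction (and arguably stated more cleanly than in the paper), while your Case~A is the extraction of the limiting traveling wave; the only difference is that you organize the argument as an explicit dichotomy on $l_\mu$ rather than first proving $l_\mu\to+\infty$ and then passing to the limit.
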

\begin{proof}
As before, the comparison principle implies that $c_\mu$  increases in $\mu$, and so we can define $c_\infty:=\lim_{\mu\to +\infty}c_\mu\in (0, +\infty]$.
If  $c_\infty<+\infty$, then we can repeat the argument in the proof of Theorem \ref{thm5.1} to conclude that \eqref{abc1} has a solution pair $(c_\infty, \hat\phi_\infty)$ with
$\hat\phi_\infty>0$, $\hat\phi_\infty(-\infty)=1$, $\hat\phi_\infty(+\infty)=0$ and $\hat\phi_\infty(0)=1/2$.
Thus $(c_\infty,\hat\phi_\infty)$ is a solution of \eqref{tw-cauchy} with finite speed, which is a contradiction with the assumption that  {\bf (J2)} is not satisfied. Therefore we necessarily have $c_\infty=+\infty$.
\end{proof}

Finally, let us discuss the relationship between the solution of \eqref{main-fb} and that of \eqref{Cauchy}.
\begin{theorem}\label{thm5.3}
Suppose that {\bf (J)} and {\bf (f1)-(f2)} are satisfied. Let $(u_\mu, g_\mu, h_\mu)$ be the solution of \eqref{main-fb} for $\mu>0$ with initial datum $u_0$, and $u_*$ be the solution of \eqref{Cauchy} with the same initial datum {\rm (}extended by zero outside $[-h_0, h_0]${\rm)}. Then $-g_\mu, h_\mu\to +\infty$ locally uniformly in $(0, +\infty)$ and $u_\mu\rightarrow u_*$ locally uniformly in $\mathbb{R}^+\times\mathbb{R}$ as $\mu\rightarrow +\infty$.
\end{theorem}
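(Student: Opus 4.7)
The plan is to mimic the corresponding $\mu\to\infty$ argument used in the local-diffusion case (Du--Guo), adapted to the nonlocal setting. First I would collect a priori information. By the comparison principle of \cite{CDLL2019}, $h_\mu(t)$ is nondecreasing in $\mu$ and $g_\mu(t)$ is nonincreasing in $\mu$, so the pointwise limits $h_\infty(t):=\lim_{\mu\to\infty}h_\mu(t)\in(h_0,+\infty]$ and $g_\infty(t):=\lim_{\mu\to\infty}g_\mu(t)\in[-\infty,-h_0)$ are well-defined and monotone in $t$. Comparison with the ODE $v'=f(v)$, $v(0)=\max\{K_0,\|u_0\|_\infty\}$ (using \textbf{(f2)}) gives a uniform bound $u_\mu\leq K$, and a further comparison yields $u_\mu\leq u_*$ on $[g_\mu(t),h_\mu(t)]$. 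Dropping the nonnegative nonlocal term in the equation and using the local Lipschitz property from \textbf{(f1)} gives the Gr\"onwall lower bound $u_\mu(t,x)\geq u_0(x)e^{-(d+K)t}$ for $t\geq 0$ and $x\in[-h_0,h_0]$, uniformly in $\mu$.

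Next I would extract a subsequence $\mu_n\to\infty$ such that $u_{\mu_n}$ converges locally uniformly on $\{(t,x):t>0,\ g_\infty(t)<x<h_\infty(t)\}$ to some $u_\infty$ (extended by $0$ outside), using the uniform bounds on $u_{\mu_n}$ and $\partial_t u_{\mu_n}$ coming from the equation together with $J\in L^1$. Dominated convergence then lets me pass to the limit in the integral form of the equation for $u_{\mu_n}$, so $u_\infty$ solves the nonlocal diffusion equation on its domain of positivity. The initial datum is recovered by sandwiching $u_0(x)e^{-(d+K)t}\leq u_\infty(t,x)\leq u_*(t,x)$ on $[-h_0,h_0]$ as $t\to 0^+$, combined with $0\leq u_\infty\leq u_*$ elsewhere.

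The crucial step is to show $h_\infty(t)\equiv+\infty$ and $g_\infty(t)\equiv-\infty$ for every $t>0$. Integrating the free-boundary condition for $h_\mu$ over $[0,t_1]$ gives
\[
h_\mu(t_1)-h_0=\mu\int_0^{t_1}\!\!\int_{g_\mu(s)}^{h_\mu(s)}\!\!\int_{h_\mu(s)}^{\infty}\!\! J(x-y)u_\mu(s,x)\,dy\,dx\,ds.
\]
If $h_\infty(t_1)<\infty$, then the left side stays bounded as $\mu\to\infty$, so the triple integral on the right tends to $0$; passing to the limit by dominated convergence forces
\[
u_\infty(s,x)\int_{h_\infty(s)-x}^{\infty}J(z)\,dz=0 \quad\text{for a.e.\ } (s,x),\ 0<s<t_1,\ g_\infty(s)<x<h_\infty(s).
\]
On the other hand, $J(0)>0$ together with the continuity of $J$ implies $\int_{h_\infty(s)-x}^{\infty}J(z)\,dz>0$ for $x$ in a left neighbourhood of $h_\infty(s)$, while iterating the nonlocal integral bound outward from $[-h_0,h_0]$ shows $u_\infty(s,\cdot)>0$ throughout $(g_\infty(s),h_\infty(s))$. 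This contradicts the vanishing identity and so $h_\infty\equiv+\infty$; by symmetry $g_\infty\equiv-\infty$. The propagation-of-positivity argument is the main technical obstacle, especially when $J$ has compact support: one must iterate the inequality step by step through shifts bounded by the diameter of $\mathrm{supp}(J)$ to cover each bounded subset of the limit domain with a uniform positive lower bound.

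Once $h_\infty\equiv+\infty$ and $g_\infty\equiv-\infty$, the limit $u_\infty$ solves \eqref{Cauchy} on all of $(0,\infty)\times\mathbb R$ with initial datum $u_0$ extended by zero; uniqueness for \eqref{Cauchy} under \textbf{(f1)} then forces $u_\infty=u_*$. Because every subsequential limit equals $u_*$, the whole family $u_\mu$ converges to $u_*$ locally uniformly in $(0,\infty)\times\mathbb R$. The local uniform convergence $h_\mu\to+\infty$ and $-g_\mu\to+\infty$ on any $[a,b]\subset(0,\infty)$ is immediate from the monotonicity of $h_\mu(\cdot)$ and $-g_\mu(\cdot)$ in $t$: indeed $\inf_{t\in[a,b]}h_\mu(t)=h_\mu(a)\to h_\infty(a)=+\infty$, and symmetrically for $-g_\mu$.
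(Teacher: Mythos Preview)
Your overall route matches the paper's, but two steps deserve comment. The compactness extraction is not justified as written: bounds on $u_{\mu}$ and $\partial_t u_{\mu}$ give equicontinuity only in $t$, and nonlocal diffusion supplies no extra regularity in $x$, so Arzel\`a--Ascoli does not deliver local uniform convergence in both variables. The paper sidesteps this entirely by using the monotonicity of $u_\mu$ in $\mu$: the pointwise limit $u_\infty:=\lim_{\mu\to\infty}u_\mu$ exists with no subsequence, dominated convergence suffices to pass to the limit in the integral equation, and the upgrade to \emph{local uniform} convergence is handled only at the very end by a Gr\"onwall estimate on $W_\mu:=u_\mu-u_*$ (Dini's theorem would also do once $u_\infty=u_*$ is known to be continuous).

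Your ``main technical obstacle''---propagation of positivity of $u_\infty$ through the limit domain---is in fact free. Since $u_\mu$ increases with $\mu$, for any $x\in(g_\infty(t),h_\infty(t))$ one picks $\mu_0$ with $g_{\mu_0}(t)<x<h_{\mu_0}(t)$ and gets $u_\infty(t,x)\ge u_{\mu_0}(t,x)>0$; no iteration along $\mathrm{supp}(J)$ is needed, compactly supported $J$ or not. The paper then derives the contradiction from $h_\infty(t_0)<\infty$ more directly than your vanishing-integral identity: choosing $\delta>0$ with $J\ge\sigma_0>0$ on $[-2\delta,2\delta]$ gives $h_\mu'(t)\ge \mu\sigma_0\delta\int_{h_\mu(t)-\delta}^{h_\mu(t)}u_\mu(t,x)\,dx$, and after integrating in $t$ and dividing by $\mu$, the right side tends (by dominated convergence and the positivity just noted) to a strictly positive number while the left side tends to $0$.
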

\begin{proof}
By the comparison principle we know that $-g_\mu, h_\mu$ and $u_\mu$ are increasing in $\mu$. Therefore for each $t>0$,
\[
g_\infty(t):=\lim_{\mu\to+\infty} g_\mu(t)\in [-\infty, -h_0),\; \; h_\infty(t):=\lim_{\mu\to+\infty} h_\mu(t)\in (h_0, +\infty],
\]
\[ \mbox{ and }\;\;\;  u_\infty(t,x):=\lim_{\mu\to+\infty} u_\mu(t,x),\;\; g_\infty(t)<x<h_\infty(t),\; t>0,
\]
are well-defined. Moreover,
\[
0<u_\infty(t,x)\leq M_0^*:=\max\{\|u_0\|_\infty, K_0\} \mbox{ for } t>0,\; g_\infty(t)<x<h_\infty(t),
\]
where $K_0$ appears in {\bf (f2)}. Since $J(0)>0$ there exists $\delta>0$ small so that
\[
\sigma_0:=\min_{|x|\leq 2\delta } J(x)>0.
\]

If there exists $t_0>0$ such that $h_\infty(t_0)<+\infty$, then clearly $h_\mu(t)\leq h_\infty(t)\leq h_\infty (t_0)<+\infty$ for $t\in (0, t_0]$, $\mu>0$, and
\[
\begin{aligned}
h_\mu'(t)&\geq \mu\int_{h_{\mu}(t)-\delta}^{h_{\mu}(t)}\int_{h_{\mu}(t)}^{h_\mu(t)+\delta} J(x-y)u_{\mu}(t,x)dydx\\
&\geq \mu \sigma_0\delta\int_{h_{\mu}(t)-\delta}^{h_{\mu}(t)} u_{\mu}(t,x)dx \mbox{ for }  \mu>0,\; t>0.
\end{aligned}
\]
Denote
\[
m(t,\mu):=\int_{h_{\mu}(t)-\delta}^{h_{\mu}(t)} u_{\mu}(t,x)dx.
\]
Then clearly $m\in L^\infty$ and by the dominated convergence theorem we have
\[
\lim_{\mu\to+\infty} m(t,\mu)=m_\infty(t):=\int_{h_{\infty}(t)-\delta}^{h_{\infty}(t)} u_{\infty}(t,x)dx>0 \mbox{ for } t\in (0, t_0].
\]
The dominated convergence theorem then yields
\[
[h_\mu(t_0)-h_0]/\mu\geq \sigma_0\delta \int_0^{t_0} m(t,\mu)dt\to \sigma_0\delta\int_0^{t_0}m_\infty(t)dt>0 \mbox{ as } \mu\to+\infty.
\]
It follows that
$
\lim_{\mu\to+\infty}h_\mu(t_0)= +\infty$, a contradiction to the assumption $h_\infty(t_0)<+\infty$. Therefore $h_\infty(t)=+\infty$ for every $t>0$, which, together with the monotonicity of $h_\mu(t)$ in $t$,  implies
\[
\mbox{ $\lim_{\mu\to+\infty}h_\mu(t)= +\infty$ locally uniformly in $(0, \infty)$.}
\]
We can similarly show that
\[
\mbox{ $\lim_{\mu\to+\infty}g_\mu(t)= -\infty$ locally uniformly in $(0, \infty)$.}
\]

We now examine $u_\infty$. 
From \eqref{main-fb} we obtain
\begin{equation}
\label{u_mu}
u_\mu(t,x)=u_\mu(t_0,x)+\int_{t_0}^t\Big[d\int_{g_\mu(\tau)}^{h_\mu(\tau)}J(x-y)u_\mu(\tau, y)dy
-du_\mu(\tau, x)+f(u_\mu(\tau,x))\Big]d\tau,
\end{equation}
where $0<t_0<t$ and $g_\mu(t_0)<x<h_\mu(t_0)$.

Letting $\mu\to\infty$ in \eqref{u_mu}, and using $u_\mu\to u_\infty$, $g_\mu\to-\infty$, $h_\mu\to +\infty$,  the uniform boundedness of $u_\mu$, and $J\in L^1(\mathbb R)$, we deduce
\begin{equation}\label{u_infty}
u_\infty(t,x)=u_\infty(t_0,x)+\int_{t_0}^t\Big[d\int_{-\infty}^\infty J(x-y)u_\infty(\tau, y)dy-du_\infty (\tau,x)+f(u_\infty(\tau, x))\Big]d\tau.
\end{equation}

 We next show that $\lim_{t\to 0} u_\infty(t,x)=u_0(x)$ uniformly for $x\in\mathbb R$. Note that, by the comparison principle we have
 \[
 u_{\mu_0}(t,x)\leq u_\mu(t,x)\leq u_*(t,x) \mbox{ for } t>0,\; x\in [-h_0, h_0],\; \mu>\mu_0.
 \]
 Letting $\mu\to+\infty$ we immediately obtain
 \[
 u_{\mu_0}(t,x)\leq u_\infty(t,x)\leq u_*(t,x) \mbox{ for } t>0,\; x\in [-h_0, h_0].
 \]
 Letting $t\to 0$, we obtain from the above inequalities that $u_\infty(t,x)\to u_0(x)$ uniformly for $x\in [-h_0, h_0]$.

 For $x\in\mathbb R\setminus [-h_0, h_0]$, we may use the inequalities
 \[
0\leq  u_\mu(t,x)\leq u_*(t,x) \mbox{ for } t>0,\; x\in [g_\mu(t), h_{\mu}(t)]
 \]
 to obtain
 \[
 0\leq  u_\infty(t,x)\leq u_*(t,x) \mbox{ for } t>0,\; x\in\mathbb R.
 \]
 Then letting $t\to 0$ we deduce
 \[
 \lim_{t\to\infty} u_\infty(t,x)=0=u_0(x) \mbox{ uniformly for } x\in\mathbb R\setminus [-h_0,h_0].
 \]
 Therefore we may let $t_0\to 0$ in \eqref{u_infty} to obtain
 \[
 u_\infty(t,x)=u_0(x)+\int_{0}^t\Big[d\int_{-\infty}^\infty J(x-y)u_\infty(\tau, y)dy-du_\infty (\tau,x)+f(u_\infty(\tau, x))\Big]d\tau,
 \]
 for $t>0$, $x\in\R$.
 This clearly implies that
 $u_\infty$ satisfies  \eqref{Cauchy}, and hence $u_\infty\equiv u_*$ by uniqueness. 
 
 We next prove
 that $u_\mu\to u_\infty=u_*$
locally uniformly in $\mathbb R^+\times\mathbb R$, namely,
given any finite interval $I_0=[\sigma_0, M_0]\subset (0,+\infty)$,
\[
\lim_{\mu\to\infty} W_\mu(t,x)=0 \mbox{ uniformly for } (t,x)\in I_0\times I_0,
\]
where $W_\mu(t,x):=u_\mu(t,x)-u_\infty(t,x)$.

For any given $\epsilon>0$, due to $u_{\mu_0}\leq u_\mu\leq u_\infty$ for $\mu\geq \mu_0:=1$, and $u_{\mu_0}, u_\infty \to u_0$ as $t\to 0$ uniformly in $x\in\R$, we can find $t_0\in (0,\sigma_0)$ such that
\[
|W_\mu(t_0,x)|=|u_\mu(t_0,x)-u_\infty(t_0,x)|<\epsilon \mbox{ for } x\in\R,\; \mu\geq \mu_0.
\]
We can then find $M>M_0$ and $\mu_1>\mu_0$ such that, for $(t,x)\in [t_0, M_0]\times I_0$ and $\mu\geq\mu_1$, 
\[
[g_\mu(t), h_\mu(t)]\supset [-M, M],\; E_\mu(t,x):=d\int_{\R\setminus [g_\mu(t), h_\mu(t)]}J(x-y)W_\mu(t,y)dy\in (-\epsilon, \epsilon).
\]
From the equations for $u_\mu$ and $u_\infty$ we obtain
\[
\partial_t W_\mu(t,x)=d\int_{\R}J(x-y)W_\mu(t, y)dy+\beta_\mu(t,x)W_\mu(t, x)-E_\mu(t,x),
\]
where $\beta_\mu(t,x):=-d$ if $u_\mu(t,x)-u_\infty(t,x)=0$, and otherwise
\[
\beta_\mu(t,x):=\frac{f(u_\mu(t,x))-f(u_\infty(t,x))}{u_\mu(t,x)-u_\infty(t,x)}-d,
\]
and so $|\beta_\mu(t,x)|\leq \beta^*:=d+K(\|u_\infty\|_\infty])$.

Therefore, for $(t,x)\in [t_0, M_0]\times I_0$, we have
\begin{align*}
|W_\mu(t,x)|=&\,e^{\int_{t_0}^t\beta(s,x)ds}\left|W_\mu(t_0,x)+\int_{t_0}^te^{-\int_{t_0}^\tau \beta(s,x)ds}\Big[d\int_{\R}J(x-y)W_\mu(\tau,y)dy-E_\mu(\tau,x)\Big]d\tau\right|\\
\leq& \, e^{M_0\beta^*}\left[\epsilon+e^{M_0\beta^*}\int_{t_0}^t\Big[d\int_{\R} J(x-y)|W_\mu(\tau,y)|dy+\epsilon\Big]d\tau\right].
\end{align*}
Since $W_\mu(\tau,y)\to 0$ as $\mu\to\infty$, and $|W_\mu| $ is uniformly bounded, and $J\in L^1(\R)$, we have
\[
\lim_{\mu\to\infty} \int_{t_0}^{M_0}\int_{\R} J(x-y)|W_\mu(\tau,y)|dyd\tau=0.
\]
It follows that
\[
\limsup_{\mu\to\infty} |W_\mu(t,x)|\leq e^{M_0\beta^*}\left[\epsilon+M_0e^{M_0\beta^*}\epsilon\right]
\mbox{ uniformly for } (t,x)\in [t_0, M_0]\times I_0\supset I_0\times I_0.
\]
Letting $\epsilon\to 0$ we deduce
\[
\lim_{\mu\to\infty} |W_\mu(t,x)|=0 
\mbox{ uniformly for } (t,x)\in  I_0\times I_0,
\]
  as desired.
\end{proof}


\section*{Acknowledgments}
\noindent

Y. Du was partially supported by the Australian Research Council. F. Li was partially supported by NSF  of China (11431005, 11971498). Y. Du would like to thank Dr Wenjie Ni for suggesting the current proof of Lemma 2.2.

\end{document}